\documentclass[12pt,reqno]{amsart}
\textwidth=15cm \textheight=23cm
\oddsidemargin=0.5cm \evensidemargin=0.5cm
\topmargin=0cm
\usepackage{color}
% \documentclass[11pt,reqno]{amsart}
% \textwidth=12.5truecm \textheight=18truecm
% \oddsidemargin=0truecm \evensidemargin=0truecm
% \topmargin=0cm

 % \usepackage[notref,notcite]{showkeys}

\usepackage{amsmath}

\usepackage{amsthm}
\usepackage{amssymb}
\usepackage{graphics}
\usepackage{latexsym}
\usepackage{comment}
\usepackage{ulem}

\numberwithin{equation}{section}
\newtheorem{thm}{Theorem}[section]
\newtheorem{prop}[thm]{Proposition}
\newtheorem{lem}[thm]{Lemma}
\newtheorem{cor}[thm]{Corollary}
\newtheorem{hyp}{Hypothesis}

\theoremstyle{remark}
\newtheorem{rem}{Remark}[section]
\newtheorem{defn}{Definition}

\newcommand{\BBB}{\mathbb}
\newcommand{\R}{{\BBB R}}
\newcommand{\Z}{{\BBB Z}}
\newcommand{\T}{{\BBB T}}

\newcommand{\N}{{\BBB N}}
\newcommand{\C}{{\BBB C}}
\newcommand{\HT}{{\mathcal H}}%
\newcommand{\LR}[1]{{\langle {#1} \rangle }}

\newcommand{\al}{\alpha}

\newcommand{\vp}{\varphi}

\newcommand{\e}{\varepsilon}
\newcommand{\ta}{\tau}

\newcommand{\p}{\partial}

\newcommand{\La}{\Lambda}

\newcommand{\de}{\delta}
\newcommand{\om}{\omega}
\newcommand{\Om}{\Omega}

\newcommand{\zz}{{\bf z}}

\newcommand{\supp}{\operatorname{supp}}

\newcommand{\I}{\infty}

\newcommand{\sgn}{\operatorname{sgn}}

\newcommand{\EQS}[1]{\begin{align} #1 \end{align}}
\newcommand{\EQQS}[1]{\begin{align*} #1 \end{align*}}

\newcommand{\F}{\mathcal{F}}
\newcommand{\1}{{\mathbf 1}}

\newcommand{\ti}{\widetilde}
\newcommand{\ha}{\widehat}

%\newcommand{\ds}{\partial_{x}}%
%\newcommand{\dt}{\partial_{t}}%
%

  % %start line numbers
  % \usepackage{lineno}
  % \newcommand*\patchAmsMathEnvironmentForLineno[1]{
  %   \expandafter\let\csname old#1\expandafter\endcsname\csname #1\endcsname
  %   \expandafter\let\csname oldend#1\expandafter\endcsname\csname end#1\endcsname
  %   \renewenvironment{#1}
  %      {\linenomath\csname old#1\endcsname}
  %      {\csname oldend#1\endcsname\endlinenomath}}
  % \newcommand*\patchBothAmsMathEnvironmentsForLineno[1]{
  %   \patchAmsMathEnvironmentForLineno{#1}
  %   \patchAmsMathEnvironmentForLineno{#1*}}
  % \AtBeginDocument{
  % \patchBothAmsMathEnvironmentsForLineno{equation}
  % \patchBothAmsMathEnvironmentsForLineno{align}
  % \patchBothAmsMathEnvironmentsForLineno{flalign}
  % \patchBothAmsMathEnvironmentsForLineno{alignat}
  % \patchBothAmsMathEnvironmentsForLineno{gather}
  % \patchBothAmsMathEnvironmentsForLineno{multline}
  % }
  % \linenumbers
  % %end line numbers

\title[Refined bilinear Strichartz estimates and gKdV]
{Refined bilinear Strichartz estimates with application to the well-posedness of periodic generalized KdV type equations}

\author[L. Molinet and T. Tanaka]{Luc Molinet and Tomoyuki Tanaka}
\address[L. Molinet]{Institut Denis Poisson, Universit\'e de Tours, Universit\'e d'Orl\'eans, CNRS, Parc Grandmont, 37200 Tours, France}
\email[L. Molinet]{luc.molinet@univ-tours.fr}
\address[T. Tanaka]{Graduate School of Engineering Science, Yokohama National University, Yokohama, Kanagawa, 240-8501 Japan}
\email[T. Tanaka]{tanaka-tomoyuki-fp@ynu.ac.jp}

\keywords{generalized KdV equation, nonlinear dispersive equation, well-posedness, unconditional uniqueness, energy method}
%\subjclass[2010]{}
\begin{document}
\setcounter{page}{001}

\begin{abstract}
  We improve our previous result (\cite{MT22}) on the Cauchy problem for one dimensional dispersive equations  with a quite general nonlinearity in the periodic setting.
  Under the same hypotheses that the dispersive operator behaves for high frequencies as a Fourier multiplier by $ i |\xi|^\alpha \xi $ with $ 1 \le \alpha\le 2 $, and that the nonlinear term is of the form $ \partial_x f(u) $ where $f $ is a real analytic function whose Taylor series around the origin has an infinite radius of convergence, we prove the unconditional LWP of the Cauchy problem in $H^s(\mathbb{T}) $  for $ s\ge 1-\frac{\alpha}{4} $ with $ s>1/2 $.
  It is worth noting that this result is optimal in the case $\alpha=2$ (generalized KdV equation) in view of the restriction $ s>1/2 $ for the continuous injection of $ H^s(\mathbb{T}) $ into $ L^\infty(\mathbb{T}) $.
  Our main new ingredient is the replacement of refined Strichartz estimates with refined bilinear estimates in the treatment of the worst resonant interactions.
  Such refined bilinear estimates already appeared in the work of Hani \cite{H12} in the context of  Schr\"odinger equations on a compact manifold.
  Finally, the main theorem yields global existence results for $ \alpha \in [4/3,2] $.
\end{abstract}

\maketitle

\section{Introduction}

We continue our study (\cite{MT22}) of the Cauchy problem associated with dispersive equations of the form
\EQS{
  \label{eq1}
  \partial_t u + L_{\alpha+1} u + \partial_x(f(u)) &=0,\quad (t,x)\in \R\times\T,\\
  u(0,x)&=u_0(x),\quad x\in\T, \label{initial}
}
where $ \T=\R/2\pi \Z $, under the two following hypotheses on $ L_{\al+1} $ with $ 1\le \alpha\le 2 $  and $ f$.
\begin{hyp}\label{hyp1}
$ L_{\alpha+1} $ is the Fourier multiplier operator by  $  - i p_{\al+1} $ where
$p_{\al+1}\in C^{1}(\R)\cap C^2(\R\backslash\{0\})$ is a real-valued odd function satisfying, for some $\xi_0>0$, $p_{\al+1}'(\xi)\sim \xi^\al$ and $ p_{\al+1}''(\xi)\sim \xi^{\al-1}$ for all $\xi\ge \xi_0$.
\end{hyp}
\begin{hyp}\label{hyp2}
  $f:\R\to\R$ is an analytic function whose Taylor series around the origin has an infinite radius of convergence.
\end{hyp}
Recall that this class of equations contains the famous generalized Korteweg-de Vries (gKdV) and generalized Benjamin-Ono equation (gBO) that correspond respectively to the case $ \alpha=2 $ and $ \alpha=1 $ and read respectively as
\EQQS{
  \p_t u + \p_x^3 u  + \p_x (f(u))=0
}
 and
\EQQS{
  \p_t u -  {\mathcal H} \p_x^2 u  + \p_x (f(u))=0,
}
where $ {\mathcal H} $ is the Hilbert transform (Fourier multiplier by $ -i \sgn(\xi) $).

The Cauchy problem associated with this kind of dispersive equation has been extensively studied over these last thirty years (see \cite{ABFS89, B93, GKT, HIKK, KT06, Kato83, KPV1, KPV2, KK, KT1, MST, MV15, MPV18, MPV19, Po1, Tao04}).
We refer the reader to the introduction of \cite{MT22} for a brief exposition of the main contributions.
In \cite{MT22} we proposed an approach, based on the method developed in \cite{MV15} and \cite{MPV19} to solve this Cauchy problem within the general framework described by Hypotheses \ref{hyp1}--\ref{hyp2}.
We showed in particular that the Cauchy problem associated with \eqref{eq1}--\eqref{initial} is unconditionally locally well-posed in $H^s(\T) $ with $ s\ge 1-\frac{\alpha}{2(\alpha+1)}$.

In this paper, we improve this previous result by establishing the unconditional local well-posedness of \eqref{eq1} in $ H^s(\T) $ with $ s\ge 1-\frac{\alpha}{4} $ and $ s>1/2$ for $\al\in [1,2]$.
It is worth noticing that this result is optimal in the case $ \alpha=2 $, given the restriction $s>1/2 $ for the continuous embedding of $ H^s(\T) $ into $ L^\infty(\T) $.
This also enables us to reach the energy space for $ \alpha\in [4/3,2] $ and thus extend our global existence results to this range of $ \alpha$.

Recall that Colliander, Keel, Staffilani, Takaoka, and Tao \cite{CKSTT04} showed that $k$-gKdV ($L_3=\p_x^3$ and $f(x)=x^k,k\ge 2$) is locally well-posed in $H^s(\T)$ for $s\ge 1/2$ by performing a contraction mapping argument in Bourgain's spaces (see also \cite{KS21,MR09} for well-posedness results on $k$-gBO, which corresponds to the case $L_2=-\HT\p_x^2$).
Although our result with $\al=2$ and $f(x)=x^k$ is weaker than \cite{CKSTT04} by $\e>0$ in terms of regularity, we succeed in proving the unconditional uniqueness, which ensures that the solution does not depend on how it is constructed (see Definition \ref{def_UWP} for the notion of unconditional uniqueness).

The main new ingredient in this paper is the use of refined bilinear estimates that are the bilinear counterparts of the refined Strichartz estimates introduced by \cite{KT1}.
Recall that this type of refined estimates is obtained by localizing a solution of the equation in spatial frequency, then evaluate the solution in small time intervals whose length depends on the spatial frequency, and finally summing over small time intervals to obtain an estimate on $ [0,T] $.
In the context of KdV-like equations, such estimates were first introduced by Koch-Tzvetkov \cite{KT1} in the study of the Benjamin-Ono equation.
The work \cite{KT1} was inspired by Burq, G\'erard, and Tzvetkov \cite{BGT1}, in which the authors showed Strichartz estimates for the Schr\"odinger operator on compact Riemannian manifolds without boundary.
It is well known that the Strichartz estimates on such manifolds are weaker than those in Euclidean space, but in \cite{BGT1} it is shown that one can still obtain the same Strichartz estimate as in Euclidian space, but only for small time intervals whose length depends on the spatial frequency.
This leads to Strichartz estimates with a possible loss on compact Riemannian manifolds by re-summing over the small time intervals for each frequency range and then re-summing over frequency.
(Note that this type of approach may also be used to prove Strichartz estimates for dispersive operators with variable coefficients as seen, for instance, in \cite{SS}.)
Later, Hani \cite{H12} generalized the argument of \cite{BGT1} to get bilinear Strichartz estimates on compact Riemannian manifolds without boundary for the Schr\"odinger operator.
Our refined bilinear estimates are of the same type as those obtained in \cite{H12}.
Nevertheless, as far as the authors know, this is the first time such bilinear estimates are shown to be useful in the context of equations with a derivative nonlinearity.
Indeed, one of these refined bilinear estimates enables us to improve the treatment of the worst interactions, that is, the resonant case of three high input frequencies of the same order that give rise to an output frequency of the same order.
Note that in \cite{MT22} we used refined Strichartz estimates to close our estimates in this case.
The restriction $ s\ge 1-\frac{\alpha}{4} $ follows from these resonant interactions.

 Before stating our main result, let us recall  our notion of solutions:
    \begin{defn}\label{def} Let $s>1/2$. We will say that $u\in L^\infty(]0,T[;H^s(\T)) $ is a solution to \eqref{eq1}  associated with the initial datum $ u_0 \in H^s(\T)$  if
  $ u $ satisfies \eqref{eq1}--\eqref{initial} in the distributional sense, i.e. for any test function $ \phi\in C_c^\infty(]-T,T[\times \T) $,  it
holds
  \begin{equation}\label{weakeq}
  \int_0^\infty \int_{\T} \Bigl[(\phi_t +L_{\alpha+1}\phi )u +  \phi_x f(u) \Bigr] \, dx \, dt +\int_{\T} \phi(0,\cdot) u_0 \, dx =0.
  \end{equation}
 \end{defn}
 \begin{rem} \label{rem2} Note that for $u\in L^\infty(]0,T[;H^s(\T)) $,  with $ s>1/2 $, $ f(u) $ is well-defined and
  belongs to $ L^\infty(]0,T[;H^s(\T))$. Moreover,
  Hypothesis \ref{hyp1}  forces
 \EQQS{
   L_{\alpha+1} u \in L^\infty(]0,T[;H^{s-\alpha-1}(\T)) \, .
 }
  Therefore $ u_t \in L^\infty(]0,T[; H^{s-\alpha-1}(\T)) $  and \eqref{weakeq} ensures that  \eqref{eq1} is satisfied in $ L^\infty(]0,T[; H^{s-\alpha-1}(\T)) $.
  In particular, $ u\in C([0,T]; H^{s-\alpha-1}(\T))$ and \eqref{weakeq} forces  the initial condition $ u(0)=u_0 $. Note that this ensures that
$u\in C_w([0,T];H^s(\T)) $ and thus
   $ \|u_0\|_{H_x^{s}}\le  \|u\|_{L^\infty_T H_x^s} $. Finally, we notice that this also ensures that $ u $ satisfies the Duhamel formula associated with \eqref{eq1}.
 \end{rem}
 Finally, let us recall the notion of unconditional well-posedness that
was introduced by Kato \cite{Kato95}, which is, roughly speaking, the local well-posedness with uniqueness of solutions in $L^\I (]0,T[;H^s(\T))$.

 \begin{defn}\label{def_UWP}  We will say that the Cauchy problem associated with \eqref{eq1} is unconditionally locally well-posed in $ H^s(\T )$ if for any initial data $ u_0\in H^s( {\T}) $ there exists $ T=T(\|u_0\|_{H^s})>0 $ and a solution
 $ u \in C([0,T]; H^s(\T)) $ to \eqref{eq1} emanating from $ u_0 $. Moreover, $ u $ is the unique solution to  \eqref{eq1} associated with $ u_0 $
that belongs to  $ L^\infty(]0,T[; H^s(\T) )$. Finally, for any $ R>0$, the solution-map $ u_0 \mapsto u $ is continuous from the ball of $ H^s(\T) $  with radius $ R $ centered at the origin  into $C([0,T(R)]; H^s( \T)) $.
 \end{defn}

We mention that Babin, Ilyin, and Titi \cite{BIT11} employed integration by parts in time and showed the unconditional uniqueness of the KdV equation in $L^2(\T)$.
This method is actually a normal form reduction and is successfully applied to a variety of dispersive equations (see for instance  \cite{GKO13,K19p,K22,KO12,KOY20,MPp} and references therein).

Our main result is the following one:

\begin{thm}[Unconditional well-posedness]\label{theo1}
Assume that Hypotheses \ref{hyp1}--\ref{hyp2} are satisfied
 with $ 1\le \alpha\le 2$.
 Then for any $ s\ge s(\al)=1-\frac{\al}{4}$ with $s>1/2$ the Cauchy problem associated with \eqref{eq1}--\eqref{initial} is unconditionally locally  well-posed in $ H^s(\T ) $ with a maximal time of existence $T\ge g(\|u_0\|_{H^{s(\al)}})>0 $ where $ g$ is a smooth decreasing function depending only on $ L_{\alpha+1}$ and $f$.
\end{thm}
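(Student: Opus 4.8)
The plan is to follow the energy method with time normal-form reduction of \cite{MV15,MPV19,MT22}; the decisive point, since we seek \emph{unconditional} uniqueness, is that every estimate must be closed using only the $ L^\I(]0,T[;H^s(\T)) $ information on the solution, together with the Strichartz and bilinear bounds that follow from the equation through the Duhamel representation of Remark \ref{rem2}, and never an auxiliary fixed-point space. The theorem then reduces to three ingredients: (i) an a priori bound showing that smooth solutions satisfy $ \|u\|_{L^\I_T H^s_x}\lec \|u_0\|_{H^s} $ on a time $ T\ge g(\|u_0\|_{H^{s(\al)}}) $; (ii) a difference estimate controlling $ u-v $ for two solutions in a weaker norm; and (iii) the passage to existence, uniqueness and continuity of the flow. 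I will concentrate on (i), which carries all the analytic difficulty, and in particular on the resonant interaction that fixes the threshold $ s\ge 1-\frac{\al}{4} $.

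For (i), applying $ \LR{D}^s $, pairing with $ u $ in $ L^2_x $, and using that $ L_{\al+1} $ has a purely imaginary symbol and so contributes nothing to the real part, one obtains
\EQQS{
  \frac12\frac{d}{dt}\|\LR{D}^s u(t)\|_{L^2_x}^2 = -\RE \PX{\LR{D}^s \p_x f(u)}{\LR{D}^s u}.
}
Expanding $ f $ in its Taylor series and symmetrising, the right-hand side becomes a sum of multilinear forms, the $ k $-th of degree $ k+1 $ and weighted by $ f^{(k)}(0)/k! $; Hypothesis \ref{hyp2} guarantees absolute convergence of this series once each form is bounded by a power of $ \|u\|_{H^s} $ growing at most geometrically in $ k $. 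I would then integrate in time over $ [0,t] $, Littlewood--Paley decompose the inputs and the output, and integrate by parts in time, using the equation to rewrite $ \p_t $ of the highest-frequency factor. The resonance function $ \Om=p_{\al+1}(\x)-\sum_j p_{\al+1}(\x_j) $, with $ \x=\sum_j\x_j $, controls the gain: on the non-resonant region $ |\Om|\gec N^{\al+1} $ the integration by parts in time supplies a factor $ |\Om|^{-1} $ that suffices to recover the derivative $ \p_x $ and close the estimate, the boundary terms being bounded directly by $ \|u\|_{H^s}^{k+1} $.

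The crux is the resonant region, and specifically the worst case --- already present in the quartic contribution of the cubic part of $ f $ --- in which three input frequencies of size $ \sim N $ combine into an output of size $ \sim N $ while $ |\Om| $ stays small, so that integrating by parts in time gains nothing. This is exactly where the improved bilinear Strichartz estimates of \cite{H12} enter. Localising $ u $ in frequency and then in time intervals of length $ \sim N^{-\al} $ --- the scale below which the group velocity $ p_{\al+1}'\sim N^\al $ prevents the solution from feeling the periodicity, so that it behaves as on $ \R $ --- one gets a Euclidean-type bilinear $ L^2_{t,x} $ bound for the product of two frequency-localised pieces; pairing two of the four factors against the other two by Cauchy--Schwarz and re-summing over the $ \sim TN^\al $ intervals and over dyadic frequencies, the resulting gain compensates the derivative loss precisely when $ s\ge 1-\frac{\al}{4} $. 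The bookkeeping must be arranged so that the dyadic sums in $ N $ converge, so that the Taylor coefficients can be summed using the infinite radius of convergence, and --- crucially for unconditional uniqueness --- so that the bilinear estimate is applied to $ u $ itself through its Duhamel formula rather than to the iterate of a contraction scheme. I expect this resonant estimate, and the verification that the bilinear gain indeed produces the exponent $ 1-\frac{\al}{4} $, to be the main obstacle.

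Granting (i), existence follows by regularising $ u_0 $, solving the regularised problems, and passing to the limit with the uniform a priori bound and an Aubin--Lions compactness argument, Remark \ref{rem2} ensuring that the limit is a genuine distributional solution attaining $ u_0 $. For (ii) I would subtract the equations for two solutions $ u,v\in L^\I_T H^s_x $ and run the same computation on $ w=u-v $ at a regularity $ \si<s $, each multilinear form now carrying one factor $ w $ and the others equal to $ u $ or $ v $, so as to close a Gronwall inequality of the form $ \|w\|_{L^\I_T H^\si_x}\lec \|u_0-v_0\|_{H^\si} $; taking $ u_0=v_0 $ gives uniqueness in $ L^\I_T H^s_x $, and combining this estimate with the a priori bound and a Bona--Smith type frequency truncation yields continuity of the flow, completing the proof.
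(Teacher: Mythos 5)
You have the right skeleton (a priori estimate, difference estimate, limiting/Bona--Smith argument, with the threshold decided by the resonant interaction of three comparable high frequencies treated by Hani-type bilinear estimates), but there is a genuine quantitative error at exactly the decisive point: the time-localization scale. You chop into intervals of length $\sim N^{-\al}$, justified by a group-velocity heuristic. The paper chops $[0,T]$ into intervals of length $\sim N^{-1}T$ (Propositions \ref{prop_bistri} and \ref{prop_short4}, Corollary \ref{cor_free}), and this choice is not cosmetic: the interval length is forced from below by the requirement that the Duhamel term $\int_{c_j}^t e^{-(t-\tau)L_{\al+1}}P_N\p_x f\,d\tau$ be perturbative on each subinterval (the factor $N|I_j|$ must be $O(1)$), and one cannot profit from shorter intervals because the per-interval gain has to be multiplied back by the number of intervals. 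Concretely, on an interval of length $K^{-1}$ the counting argument (Lemma \ref{lem_counting2} combined with Lemma \ref{lem_short1}) yields a per-interval constant $\sim K^{-1/4}N^{-\frac{\al-1}{4}}+K^{-1/2}$, and after summing the squares over the $\sim TK$ intervals the total constant is $\sim\bigl(K^{1/2}N^{-\frac{\al-1}{2}}+1\bigr)^{1/2}$. With the paper's choice $K=NT^{-1}$ this equals $T^{-1/4}N^{\frac12-\frac{\al}{4}}$, which is precisely what produces $s(\al)=1-\frac{\al}{4}$ in the resonant case. With your choice $K\sim N^{\al}$ (more intervals, hence a larger re-summation loss) it equals $\sim N^{1/4}$ \emph{uniformly in} $\al$, and rerunning your own resonant bookkeeping with the exponent $\frac14$ in place of $\frac12-\frac{\al}{4}$ closes only for $s\ge \frac34$. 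So for every $\al>1$ your argument proves strictly less than the theorem, and for $\al=2$ it misses the claimed (and optimal) threshold $s\ge\frac12$ altogether. The heuristic points the wrong way: the gain on $N^{-1}T$-intervals does not come from a wholesale reduction to the Euclidean bilinear estimate, but from the curvature of the symbol (the counting lemma with $g''\gec N^{\al-1}$), which survives even though a frequency-$N$ wave packet wraps around $\T$ many times on such an interval.

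Two secondary gaps. First, your non-resonant/resonant dichotomy ``$|\Om|\gec N^{\al+1}$ versus the rest'' leaves a hole: the sharp lower bound available is $|\Om_{k+1}|\gec N_3N_1^{\al}$ (Lemma \ref{lem_res1}), with $N_3$ the \emph{third} largest frequency, so the high-high-low-low configuration $N_1\sim N_2\sim N\gg N_3$ has $|\Om|\sim N_3N^{\al}\ll N^{\al+1}$ — it is excluded from your normal-form region, yet it is not the three-comparable-high-frequency case your bilinear argument addresses; the paper covers it by the integration-by-parts Lemma \ref{lem_comm1} (which trades the output derivative for low-frequency derivatives) together with modulation/Bourgain estimates, which become admissible for \emph{unconditional} uniqueness only because Lemma \ref{lem1} shows that any $L^\I_TH^s$ solution automatically lies in $Z^s_T=L^\I_TH^s\cap X^{s-1,1}_T$ (your blanket refusal of auxiliary spaces is unnecessary — what matters is that the auxiliary norm is controlled by the equation, not assumed). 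Second, to combine the bilinear estimate with that integration by parts one needs the bilinear estimate to hold with a bounded Fourier multiplier inserted, i.e.\ for $\La_a$ as in \eqref{eq_bistri1.1}, not just for plain products; this is precisely why the paper carries the symbol $a$ through Sections 3 and 4 (Case 3 of $A_1$ in the proof of Proposition \ref{prop_apri}), and your plan as written does not provide for it.
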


\begin{rem} The above theorem also holds in the real line case by exactly the same approach.
 This  slightly extends  the results obtained by  the direct method making use of refined Strichartz estimate  that gives $ s>1-\frac{\al}{4}$ (see \cite{P21} and [\cite{MT22p}, Section 5]) at least in the case $ \alpha\in ]1,2[ $.
 Indeed, here we have  $ s\ge s(\al)=1-\frac{\al}{4}$ with $s>1/2$ and our hypotheses on the Fourier multiplier $ L_{\alpha+1} $ seems more general.
\end{rem}

Equation \eqref{eq1} enjoys the following conservation laws at the $ L^2$
and at the $ H^{\frac{\al}{2}} $-level:
\EQQS{
M(u)=\int_{\T} u^2 \quad \text{and} \quad
E(u)=\frac{1}{2} \int_{\T} u  \partial_x^{-1} L_{\alpha+1} u +\int_{\T}
F(u)
}
where $ \partial_x^{-1} L_{\alpha+1} $ is the Fourier multiplier by  $ \frac{p_{\alpha+1}(k)}{k} \1_{k\neq 0} $
and
\begin{equation}\label{defF} F(x) :=\int_0^x f(y) \, dy \; .
\end{equation}
At this stage, it is worth noticing that Hypothesis \ref{hyp1} ensures that the restriction of the quadratic part of the energy $ E $ to high frequencies behaves like the $ H^{\frac{\al}{2}}(\T)$-norm, whereas its restriction to the low frequencies can be controlled by the $ L^2$-norm.
Therefore, gathering these conservation laws with the above local well-posedness result, we can apply exactly the same arguments as in [\cite{MT22}, Section 5] to obtain the following GWP results for \eqref{eq1}:

\begin{cor}[Global existence for small initial data] \label{theo2}
Assume that Hypotheses \ref{hyp1}-\ref{hyp2} are satisfied with
 $ \alpha \in [4/3,2] $. Then there exists $ A=A(L_{\alpha+1},f) >0 $ such that for any initial data $ u_0\in H^s(\T) $ with $ s\ge \alpha/2 $ such that $ \|u_0\|_{H^{\frac{\al}{2}}} \le A $, the solution constructed in Theorem \ref{theo1} can be extended for all times. Moreover its trajectory is bounded in $ H^{\frac{\al}{2}}(\T) $.
\end{cor}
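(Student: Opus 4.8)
The plan is to combine the local well-posedness of Theorem \ref{theo1} with the two conservation laws $M$ and $E$ through a standard a priori bound plus a continuation argument. The restriction $\al\in[4/3,2]$ is exactly what guarantees that the energy space $H^{\frac{\al}{2}}(\T)$ sits at or above the well-posedness threshold $s(\al)=1-\frac{\al}{4}$: the inequality $\frac{\al}{2}\ge 1-\frac{\al}{4}$ is equivalent to $\al\ge 4/3$, and $\frac{\al}{2}>\frac12$ holds since $\al>1$. Consequently the local existence time furnished by Theorem \ref{theo1}, which depends only on the $H^{s(\al)}$-norm of the current datum through the decreasing function $g$, stays bounded below by a fixed positive quantity as long as one controls the $H^{\frac{\al}{2}}$-norm of the solution along the flow. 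The whole matter therefore reduces to propagating an a priori bound on $\|u(t)\|_{H^{\frac{\al}{2}}}$ for data that are small in $H^{\frac{\al}{2}}$.

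First I would establish the coercivity of the energy in the small-data regime. Writing $Q(u)=\frac12\int_\T u\,\p_x^{-1}L_{\al+1}u$ for the quadratic part of $E$, Hypothesis \ref{hyp1} gives that the symbol $p_{\al+1}(k)/k$ behaves like $|k|^\al$ for $|k|\ge \xi_0$, so that, splitting into the frequency ranges $|k|\le \xi_0$ and $|k|>\xi_0$,
\EQQS{
\|u\|_{\Hd^{\frac{\al}{2}}}^2 \lec Q(u)+\|u\|_{L^2}^2 \quad\text{and}\quad |Q(u)|\lec \|u\|_{H^{\frac{\al}{2}}}^2 .
}
For the potential part I would decompose $F$ (see \eqref{defF}) around the origin as $F(x)=F(0)+f(0)x+G(x)$ with $G(x)=O(x^2)$. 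Since $p_{\al+1}$ is odd one has $\int_\T L_{\al+1}u=0$, hence the mean $\int_\T u=\int_\T u_0$ is conserved; therefore, when $E(u(t))=E(u_0)$ is solved for $Q(u(t))$, the constant contribution $2\pi F(0)$ and the linear contribution $f(0)\int_\T u$ are time-independent and cancel exactly, leaving only the genuinely nonlinear remainder $\int_\T G(u_0)-\int_\T G(u(t))$. The latter is estimated by $|\int_\T G(u)|\lec \|u\|_{L^2}^2\,\Phi(\|u\|_{L^\infty})$ for some increasing $\Phi$ built from the Taylor coefficients of $F$, where $\|u\|_{L^\infty}\lec\|u\|_{H^{\frac{\al}{2}}}$ by Sobolev embedding (recall $\frac{\al}{2}>\frac12$). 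Combining these facts with the conservation of $M$ and $E$ yields a bound of the form
\EQQS{
\|u(t)\|_{H^{\frac{\al}{2}}}^2 \lec \|u_0\|_{H^{\frac{\al}{2}}}^2 + \|u_0\|_{L^2}^2\bigl(1+\Phi(\|u_0\|_{H^{\frac{\al}{2}}})+\Phi(\|u(t)\|_{H^{\frac{\al}{2}}})\bigr) .
}

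Then I would close this estimate by a continuity (bootstrap) argument. Assuming $\|u_0\|_{H^{\frac{\al}{2}}}\le A$, I fix a threshold $B=B(A)$ and run the continuity method on the maximal interval of existence: as long as $\|u(t)\|_{H^{\frac{\al}{2}}}\le B$, the right-hand side above is $\le CA^2(1+\Phi(B))$, which can be forced to be $\le (B/2)^2$ by taking $A$ small enough, and this is what fixes the constant $A=A(L_{\al+1},f)$. Since $t\mapsto\|u(t)\|_{H^{\frac{\al}{2}}}$ is continuous --- this follows from $u\in C([0,T];H^{\frac{\al}{2}})$ given by Theorem \ref{theo1} together with the weak continuity recorded in Remark \ref{rem2} --- the bound $\|u(t)\|_{H^{\frac{\al}{2}}}\le B$ propagates on the whole existence interval. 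Because the local existence time depends only on $\|u\|_{H^{s(\al)}}\le\|u\|_{H^{\frac{\al}{2}}}\le B$ and $g$ is decreasing, each restart of Theorem \ref{theo1} advances time by at least $g(B)>0$; iterating therefore extends the solution globally in time, with trajectory bounded in $H^{\frac{\al}{2}}(\T)$.

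The main obstacle is the coercivity step: one must show that, modulo the conserved mean, the non-quadratic part $\int_\T G(u)$ is a genuine perturbation of the positive-definite high-frequency quadratic energy rather than a term of comparable strength. This is exactly where the analyticity of $f$ and the infinite radius of convergence of its Taylor series (Hypothesis \ref{hyp2}) enter, ensuring that $\Phi(\|u\|_{L^\infty})$ is finite and controllable whenever $u$ is small in $H^{\frac{\al}{2}}$; and the cancellation of the constant $2\pi F(0)$, which does \emph{not} vanish with the data, has to be tracked carefully through the conservation of $\int_\T u$, since otherwise the a priori bound would carry an uncontrolled additive constant.
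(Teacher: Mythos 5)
Your proposal is correct and follows essentially the same route as the paper, which omits the details and refers to [MT22, Section 5]: coercivity of the conserved energy $E$ on high frequencies (as noted after \eqref{defF}), conservation of $M$ and of the mean, a small-data bootstrap for the $H^{\frac{\al}{2}}$-norm, and iteration of Theorem \ref{theo1} whose existence time depends only on the $H^{s(\al)}$-norm with $s(\al)\le \frac{\al}{2}$ precisely when $\al\ge 4/3$. The only superfluous precaution is the tracking of the constant $2\pi F(0)$: by the definition \eqref{defF} one has $F(0)=0$, so no such constant arises.
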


\begin{cor}[Global existence for arbitrary large initial data] \label{theo3}
Assume that Hypotheses \ref{hyp1}-\ref{hyp2} are satisfied with
 $ \alpha \in [4/3,2]$.
 Then the solution constructed in  Theorem \ref{theo1} can be extended for all times if the function $F$ defined
in \eqref{defF} satisfies one of the following conditions:
 \begin{enumerate}
 \item There exists $C>0  $ such that $|F(x)|\le C (1+|x|^{p+1})$ for some $ 0<p<2\alpha+1  $.
 \item  There exists $ B>0  $ such that $ F(x) \le B$ for any $x\in \R$.
 \end{enumerate}
 Moreover, its trajectory is bounded in $ H^{\frac{\al}{2}}(\T) $.
 \end{cor}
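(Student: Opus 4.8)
The plan is to globalize the local solution of Theorem \ref{theo1} by propagating an a priori bound at the energy regularity $H^{\al/2}(\T)$. First I would place myself at $s=\al/2$: since $\al\in[4/3,2]$ we have $\al/2\ge 1-\frac{\al}{4}=s(\al)$ and $\al/2>1/2$, so Theorem \ref{theo1} applies and, what is decisive, the local existence time it furnishes is bounded below by $g(\|u_0\|_{H^{s(\al)}})$, hence by $g(\|u_0\|_{H^{\al/2}})$ because $H^{\al/2}(\T)\hookrightarrow H^{s(\al)}(\T)$ and $g$ is decreasing. It therefore suffices to prove that, on the maximal interval of existence $[0,T^*)$, the norm $\|u(t)\|_{H^{\al/2}}$ remains bounded by a constant $M_*$ depending only on $\|u_0\|_{L^2}$, $E(u_0)$ and the data $(L_{\al+1},f)$. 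Indeed, a uniform such bound lets one restart the local theory from any $t_0<T^*$ over a time $\ge g(M_*)>0$, which rules out $T^*<\I$ and simultaneously yields that the trajectory is bounded in $H^{\al/2}(\T)$. Here the conservation of $M$ and $E$ along the constructed flow is justified exactly as in [\cite{MT22}, Section 5].

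The core is the coercivity of the energy modulo low frequencies. Conservation of $M$ gives $\|u(t)\|_{L^2}=\|u_0\|_{L^2}$. By Hypothesis \ref{hyp1} the symbol $\frac{p_{\al+1}(k)}{k}$ of $\p_x^{-1}L_{\al+1}$ is comparable to $|k|^\al$ for $|k|\ge\xi_0$ and bounded for $|k|<\xi_0$, whence
\[
\frac12\int_\T u\,\p_x^{-1}L_{\al+1}u \;\gtrsim\; \|u\|_{\Hd^{\al/2}}^2 - C\|u\|_{L^2}^2 .
\]
Combining this with the conservation of $E$ reduces the problem to estimating the potential term $\int_\T F(u(t))$ against the quadratic part $\|u\|_{\Hd^{\al/2}}^2$ with a strictly subquadratic dependence; the $L^2$ bound then closes the argument.

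Then I would split according to the hypotheses. Under (2), the one-sided bound $F\le B$ controls $\int_\T F(u(t))$ in the direction needed to bound the quadratic part from above (by a constant plus $2\pi B$), giving at once the desired control of $\|u(t)\|_{\Hd^{\al/2}}$. Under (1), I would use the Gagliardo--Nirenberg inequality on $\T$,
\[
\|u\|_{L^{p+1}}\lesssim \|u\|_{\Hd^{\al/2}}^{\te}\,\|u\|_{L^2}^{1-\te}+\|u\|_{L^2},
\qquad \te=\frac{1}{\al}\,\frac{p-1}{p+1},
\]
so that $\big|\int_\T F(u)\big|\lesssim 1+\|u\|_{L^{p+1}}^{p+1}\lesssim 1+\|u\|_{\Hd^{\al/2}}^{(p-1)/\al}\|u\|_{L^2}^{(p+1)(1-\te)}+\|u\|_{L^2}^{p+1}$. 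The exponent of the kinetic norm is exactly $(p-1)/\al$, and the hypothesis $p<2\al+1$ is precisely what makes it $<2$, so Young's inequality absorbs this contribution into $\|u\|_{\Hd^{\al/2}}^2$ and leaves a bound depending only on $\|u_0\|_{L^2}$ and $E(u_0)$. (For $p\le1$ one bounds $\|u\|_{L^{p+1}}$ by $\|u\|_{L^2}$ using the finiteness of $|\T|$.)

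The routine parts are the continuation/blow-up alternative and the justification of the conservation laws, which proceed as in [\cite{MT22}, Section 5]. I expect the only genuinely delicate points to be the coercivity estimate modulo the $L^2$-controlled low frequencies and, in case (1), the borderline bookkeeping showing that $p<2\al+1$ is sharp for subquadratic absorption; everything then assembles into the uniform $H^{\al/2}$ bound that drives the globalization.
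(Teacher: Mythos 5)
Your proposal is correct and takes essentially the same route as the paper, which proves this corollary by referring verbatim to [\cite{MT22}, Section 5]: conservation of $M$ and $E$, coercivity of the quadratic part of the energy modulo the conserved $L^2$ norm, control of $\int_\T F(u)$ (a one-sided bound under condition (2), Gagliardo--Nirenberg with subquadratic kinetic exponent $(p-1)/\al<2$, equivalently $p<2\al+1$, under condition (1)), and iteration of the local theory of Theorem \ref{theo1}, whose existence time is bounded below in terms of $\|\cdot\|_{H^{s(\al)}}\le\|\cdot\|_{H^{\al/2}}$ precisely because $\al/2\ge s(\al)$ when $\al\ge 4/3$. Your threshold bookkeeping ($\theta=\frac{p-1}{\al(p+1)}$, absorption by Young, the $p\le 1$ case) and the restart argument are exactly the intended ones.
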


 \begin{rem}
 Typical examples for the case (1) are:
 \begin{itemize}
   \item $f(x)$ is a polynomial function of degree strictly less than $ 2\alpha +1$.
   \item $f(x)$ is a polynomial function of $ \sin(x) $ and $\cos(x) $.
 \end{itemize}
  On the other hand, typical examples for the case (2) are:
  \begin{itemize}
    \item $ f(x) $ is a  polynomial function of odd degree with $ \displaystyle\lim_{x\to +\infty} f(x)=-\infty $.
    \item $f(x) =-\exp(x) $ or $ f(x)=-\sinh(x) $.
  \end{itemize}
  \end{rem}

This paper is organized as follows.
In the next section, we introduce the notation and the function spaces and recall some basic estimates.
Section 3 is devoted to the proof of the main new ingredient of this paper that is the refined bilinear Strichartz estimate. In Sections 4 and 5, we prove the energy estimates we need on a solution and on the difference of two solutions to obtain the unconditional local well-posedness (LWP) result. In Section 6, we briefly recall how this unconditional LWP result follows from these energy estimates.
Finally, in the Appendix, for the sake of completeness, we provide the proof of the two estimates we borrowed from the framework of short-time $X^{s,b}$ spaces.

\section{Notation, Function Spaces and Basic Estimates}\label{notation}

\subsection{Notation}\label{sub_notation}

Throughout this paper, $\N$ denotes the set of non-negative integers.
For any positive numbers $a$ and $b$, we write $a\lesssim b$ when there exists a positive constant $C$ such that $a\le Cb$.
We also write $a\sim b$ when $a\lesssim b$ and $b\lesssim a$ hold.
Moreover, we denote $a\ll b$ if the estimate $b\lesssim a$ does not hold.
For two non-negative numbers $a,b$, we denote $a\vee b:=\max\{a,b\}$ and $a\wedge b:=\min\{a,b\}$.
We also write $\LR{\cdot}=(1+|\cdot|^2)^{1/2}$.
Moreover, if $a\in\R$, $a+$, respectively $a-$ denotes a number slightly greater, respectively lesser, than $a$.

For $u=u(t,x)$, $\F u=\tilde{u}$ denotes its space-time Fourier transform, whereas $\F_x u=\hat{u}$ (resp. $\F_t u$) denotes its Fourier transform in space (resp. time). We define the Riesz potentials by $D_x^s g:=\F_x^{-1}(|\xi|^s \F_x g)$.
We also denote the unitary group associated to the linear part of \eqref{eq1} by $U_\al(t)=e^{-tL_{\al+1}}$, i.e.,
\EQQS{
  U_\al(t)u=\F_x^{-1}(e^{itp_{\al+1}(\xi)}\F_x u).
}

Throughout this paper, we fix a smooth even cutoff function $\chi$:
let $\chi\in C_0^\I(\R)$ satisfy
\EQS{\label{defchi}
  0\le \chi\le 1, \quad \chi|_{-1,1}=1\quad
  \textrm{and}\quad \supp\chi\subset[-2,2].
}
We set $\phi(\xi):=\chi(\xi)-\chi(2\xi)$.
For any $l\in\N$, we define
\EQS{\label{defpsi}
  \phi_{2^l}(\xi):=\phi(2^{-l}\xi),\quad
  \psi_{2^l}(\ta,\xi):=\phi_{2^l}(\ta-p_{\al+1}(\xi)),
}
where $ip_{\al+1}(\xi)$ is the Fourier symbol of $L_{\al+1}$.
By convention, we also denote
\EQQS{
  \phi_0(\xi)=\chi(2\xi)\quad
  \textrm{and}\quad
  \psi_0(\ta,\xi)=\chi(2(\ta-p_{\al+1}(\xi))).
}
Any summations over capitalized variables such as $K, L, M$ or $N$ are presumed to be dyadic.
We work with non-homogeneous dyadic decompositions, i.e., these variables range over numbers of the form $\{2^k; k\in\N\}\cup \{0\}$.
We call those numbers \textit{non-homogeneous dyadic numbers}.
It is worth pointing out that $\sum_N\phi_N(\xi)=1$ for any $\xi\in\Z$,
\EQQS{
  \supp(\phi_N)\subset\{N/2\le |\xi|\le 2N\},\ N\ge 1,\quad
  \textrm{and}\quad
   \supp(\phi_0)\subset\{|\xi|\le 1\}.
}

Finally, we define the Littlewood--Paley multipliers $P_N$ and $Q_L$ by
\EQQS{
  P_N u=\F_x^{-1}(\phi_N \F_x u) \quad\textrm{and}
  \quad Q_Lu=\F^{-1}(\psi_L \F u).
}
We also set $P^+u:=\F_x^{-1}(\1_{\{\xi\ge 1\}}\F_x u)$,
$P^-u:=\F_x^{-1}(\1_{\{\xi\le -1\}}\F_x u)$,
$P_{\ge N}:=\sum_{K\ge N}P_K$,
$P_{\le N}:=\sum_{K\le N}P_K, Q_{\ge L}:=\sum_{K\ge L}Q_K$ and $Q_{\le L}:=\sum_{K\le L}Q_K$.

\subsection{Function Spaces}
For $1\le p\le \I$, $L^p(\T)$ is the standard Lebesgue space with the norm $\|\cdot\|_{L^p}$.

In this paper, we will use the frequency envelope method (see for instance \cite{Tao04}  and \cite{KT1}) in order to show the continuity result with respect to initial data.
To this aim, we first introduce the following:
\begin{defn}
  Let $\de>1$.
  An {\it acceptable frequency weight} $\{\om_N^{(\de)}\}_{N\in 2^\N\cup\{0\}}$ is defined as a dyadic sequence satisfying $\om_N\le \om_{2N}\le \de\om_N$ for $N\ge 1$.
  We simply write $\{\om_N\}$ when there is no confusion.
\end{defn}
With an acceptable frequency weight $\{\om_N\}$, we slightly modulate the classical Sobolev spaces in the following way:
for $s\ge0$, we define $H_\om^s(\T)$ with the norm
\EQQS{
  \|u\|_{H_\om^s}
  :=\bigg(\sum_{N\in 2^{\N}\cup\{0\}}\om_N^2 (1\vee N)^{2s}\|P_N u\|_{L^2}^2\bigg)^{\frac 12}.
}
Note that $H_\om^s(\T)=H^s(\T)$ when we choose $\om_N\equiv 1$.
Here, $H^s(\T)$ is the usual $L^2$--based Sobolev space.
If $B_x$ is one of spaces defined above, for $1\le p\le \I$ and $T>0$, we define the space-time spaces $L_t^p B_x :=L^p(\R;B_x)$ and $L_T^p B_x :=L^p([0,T];B_x)$ equipped with the norms (with obvious modifications for $p=\I$)
\EQQS{
  \|u\|_{L_t^p B_x}=\bigg(\int_\R\|u(t,\cdot)\|_{B_x}^p dt\bigg)^{\frac 1p}\quad
  \textrm{and}\quad
  \|u\|_{L_T^p B_x}=\bigg(\int_0^T\|u(t,\cdot)\|_{B_x}^p dt\bigg)^{\frac 1p},
}
respectively.
For $s,b\in\R$, we introduce the Bourgain spaces $X^{s,b}$ associated with the operator $L_{\al+1}$, endowed with the norm
\EQQS{
  \|u\|_{X^{s,b}}
  =\Bigg(\sum_{\xi=-\I}^\I \int_{-\I}^\I \LR{\xi}^{2s}\LR{\ta-p_{\al+1}(\xi)}^{2b}|\tilde{u}(\ta,\xi)|^2d\ta\Bigg)^{\frac 12}.
}
We also use a slightly stronger space $X_\om^{s,b}$ with the norm
\EQQS{
  \|u\|_{X_\om^{s,b}}
  :=\bigg(\sum_{N}\om_N^2 (1\vee N)^{2s}\|P_N u\|_{X^{0,b}}^2\bigg)^{\frac 12}.
}
In the proof of the refined bilinear Strichartz estimate, we use the Besov type $X^{s,b}$ spaces: for $b\in\R$ and $1\le q<\I$,
\EQQS{
  \|u\|_{X^{0,b,q}}:=\bigg(\sum_{L}L^{bq}\|Q_L u\|_{L_{t,x}^2}^q\bigg)^{\frac 1q}
}
with the obvious modifications in the case $q=\I$.
However, we only use $X^{0,\frac12,1}$ throughout this paper.
We define the function spaces $Z^s $ (resp. $Z^s_\om $), with $s\in \R$, as $Z^s:= L_t^\I H^s\cap X^{s-1,1}$ (resp. $Z^s_\om:= L_t^\I H_\om^s\cap X_\om^{s-1,1}$), endowed with the natural norm
\EQQS{
  \|u\|_{Z^s}=\|u\|_{L_t^\I H^s}+\|u\|_{X^{s-1,1}} \quad
  (\text{resp}.\  \|u\|_{Z^s_\om}=\|u\|_{L_t^\I H^s_\om}+\|u\|_{X^{s-1,1}_\om}) .
}
We also use the restriction in time versions of these spaces.
Let $T>0$ be a positive time and $B$ be a normed space of space-time functions.
The restriction space $B_T$ will be the space of functions $u:]0,T[\times\T\to\R$ or $\C$ satisfying
\EQQS{
  \|u\|_{B_T}
  :=\inf\{\|\tilde{u}\|_B \ |\ \tilde{u}:\R\times \T\to\R\ \textrm{or}\ \C,\ \tilde{u}=u\ \textrm{on}\ ]0,T[\times\T\}<\I.
}

Finally, we introduce a bounded linear operator from $X_{\om,T}^{s-1,1}\cap L_T^\I H_\om^s$ into $Z_\om^s$ with a bound independent of $s$ and $T$.
The existence of this operator ensures that actually $ Z^s_{\om,T}= L_T^\I H^s_\om\cap X^{s-1,1}_{\om,T}$.
Following \cite{MN08}, we define $\rho_T$ as
\EQS{\label{def_ext}
  \rho_T(u)(t):=U_\al(t)\chi(t)U_\al(-\mu_T(t))u(\mu_T(t)),
}
where $\mu_T$ is the continuous piecewise affine function defined by
\EQS{
  \mu_T(t)=
  \begin{cases}
    0 &\textrm{for}\quad t\notin]0,2T[,\\
    t &\textrm{for}\quad t\in [0,T],\\
    2T-t &\textrm{for}\quad t\in [T,2T].
  \end{cases}
}

\begin{lem}\label{extensionlem}
  Let $0<T\le 1$, $s\in\R$ and let $\{\om_N\}$ be an acceptable frequency weight.
  Then,
  \EQQS{
    \rho_T:&X_{\om,T}^{s-1,1} \cap L_T^\I H_\om^s\to Z^s_\om\\
    &u\mapsto \rho_T(u)
  }
  is a bounded linear operator, i.e.,
  \EQS{\label{eq2.1}
    \|\rho_T(u)\|_{L_t^\I H_\om^s}
    +\|\rho_T(u)\|_{X^{s-1,1}_\om}\lesssim
    \|u\|_{L_T^\I H_\om^s}
    +\|u\|_{X_{\om,T}^{s-1,1}},
  }
  for all $u\in X_{\om,T}^{s-1,1}\cap L_T^\I H_\om^s$.
  Moreover, it holds that
  \EQS{\label{eq2.1single}
  \|\rho_T(u)\|_{L_t^\I H_\om^s}
  \lesssim
  \|u\|_{L_T^\I H_\om^s}
  }
  for all $u\in L_T^\I H_\om^s$.
  Here, the implicit constants in \eqref{eq2.1} and \eqref{eq2.1single} can be chosen independent of $0<T\le 1$ and $s\in\R$.
\end{lem}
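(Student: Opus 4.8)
The plan is to render $\rho_T$ transparent by conjugating with the free evolution. Since $U_\al$ is a unitary group on each $H_\om^s(\T)$ that commutes with the projectors $P_N$, introducing the \emph{profile} $v(t):=U_\al(-t)u(t)$ (which, because $\mu_T$ takes values in $[0,T]$, is only ever evaluated on $[0,T]$) and using $U_\al(t)U_\al(-\mu_T(t))=U_\al(t-\mu_T(t))$ gives the identity
\EQQS{
  U_\al(-t)\rho_T(u)(t)=\chi(t)\,v(\mu_T(t)).
}
First I would read off \eqref{eq2.1single}: by unitarity $\|\rho_T(u)(t)\|_{H_\om^s}=|\chi(t)|\,\|u(\mu_T(t))\|_{H_\om^s}$, and since $\mu_T$ maps $\R$ into $[0,T]$ and $|\chi|\le1$, taking the supremum in $t$ yields $\|\rho_T(u)\|_{L_t^\I H_\om^s}\le\|u\|_{L_T^\I H_\om^s}$, with a constant independent of $s$ and $T$.

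For the Bourgain part I would use that, frequency by frequency, $w\mapsto U_\al(-\cdot)w$ is an isometry from $X^{0,1}$ onto $H_t^1L_x^2$; consequently
\EQQS{
  \|\rho_T(u)\|_{X^{s-1,1}_\om}^2
  =\sum_N\om_N^2(1\vee N)^{2(s-1)}\big\|\chi(t)\,v_N(\mu_T(t))\big\|_{H_t^1L_x^2}^2,\qquad v_N:=P_N v.
}
This reduces everything to the scalar (in time) bound $\|\chi(t)\,h(\mu_T(t))\|_{H_t^1L_x^2}\lesssim\|h\|_{H^1(]0,T[;L_x^2)}+\|h(0)\|_{L_x^2}$, uniformly in $T\in(0,1]$. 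I would prove it by the chain rule, $\p_t[\chi(t)h(\mu_T(t))]=\chi'(t)h(\mu_T(t))+\chi(t)h'(\mu_T(t))\mu_T'(t)$: on $(0,2T)$ one has $|\mu_T'|=1$ and the substitution $t\mapsto\mu_T(t)$ folds $(0,2T)$ onto $]0,T[$ with unit Jacobian, so both the function and derivative terms are controlled by $\|h\|_{H^1(]0,T[;L_x^2)}$; outside $(0,2T)$ but inside $\supp\chi$ one has $\mu_T\equiv0$, producing only the trace $h(0)$ multiplied by the bounded weights $\chi,\chi'$.

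It then remains to sum in $N$. Since $v_N=U_\al(-\cdot)P_N u$, the interval contribution $\|v_N\|_{H^1(]0,T[;L_x^2)}$ is bounded by $\|P_N u\|_{X^{0,1}_T}$ through the trivial restriction inequality, and the trace contribution obeys $\|v_N(0)\|_{L_x^2}=\|P_N u(0)\|_{L_x^2}\lesssim\|P_N u\|_{X^{0,1}_T}$ by the uniform embedding $X^{0,1}\hookrightarrow C_tL_x^2$. Using the super-additivity $\sum_N\om_N^2(1\vee N)^{2(s-1)}\|P_N u\|_{X^{0,1}_T}^2\le\|u\|_{X^{s-1,1}_{\om,T}}^2$ (for any single joint extension $\tilde u$, $P_N\tilde u$ extends $P_N u$, so the joint norm dominates the frequency-wise infima), both contributions are bounded by $\|u\|_{X^{s-1,1}_{\om,T}}^2$. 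Adding this to \eqref{eq2.1single} yields \eqref{eq2.1}; linearity of $\rho_T$ is immediate from its definition.

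The main obstacle is the uniformity in $T$ as $T\to0$, which is concentrated entirely in the boundary region where $\mu_T$ degenerates to the constant $0$ and $\rho_T(u)$ reduces to the free evolution $\chi(t)U_\al(t)u(0)$ of the initial trace. The naive move of absorbing $\|h(0)\|_{L_x^2}$ into $\|h\|_{H^1(]0,T[;L_x^2)}$ fails because the one-dimensional embedding $H^1(]0,T[)\hookrightarrow L^\I$ costs a factor $\sim T^{-1/2}$; the fix is to bound the trace instead through the $T$-uniform embedding $X^{0,1}\hookrightarrow C_tL_x^2$, i.e.\ to spend the time derivative encoded in the $b=1$ of $X^{s-1,1}$ rather than a power of $T$. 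Two structural features make the rest uniform: the reflection $2T-t$ in $\mu_T$ keeps $v(\mu_T(\cdot))$ in $H_t^1$ (the profile is continuous at $t=T$ and its derivative merely flips sign, introducing no singular part), and $s$ enters only through the inert frequency weights $\om_N^2(1\vee N)^{2(s-1)}$, so uniformity in $s$ is automatic.
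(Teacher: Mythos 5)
Your proposal is correct, and it is essentially the proof that the paper delegates by citation: the paper simply invokes Lemma 2.4 of \cite{MPV19} (noting the weights $\om_N$ are harmless), and that lemma's proof is exactly your argument --- conjugate by the free group so that $X^{0,1}$ becomes $H^1_t L^2_x$ frequency by frequency, apply the chain rule to $\chi(t)\,v(\mu_T(t))$ using that the fold $\mu_T$ has unit Jacobian on $(0,2T)$ and is constant outside, and control the boundary trace via the uniform embedding $X^{0,1}\hookrightarrow C_tL^2_x$ rather than a $T$-dependent Sobolev embedding. The only point worth flagging is that both your one-line proof of \eqref{eq2.1single} and the lemma itself implicitly use that $u(0)$ is the (weakly) continuous trace of $u$, so that $\|u(0)\|_{H^s_\om}\le\|u\|_{L_T^\I H_\om^s}$; this convention is built into the paper's framework (cf.\ Remark \ref{rem2}) and into the definition of $\rho_T$, so it is not a gap in your argument.
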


\begin{proof}
  See Lemma 2.4  in \cite{MPV19} for $ \om_N\equiv 1$ but it is obvious that the result does not depend on $ \om_N$.
\end{proof}

\subsection{Basic Estimates}

In this subsection, we collect some fundamental estimates.
Well-known estimates are adapted for our setting $H_\om^s(\T)$ and $f(u)$.

\begin{lem}
  Let $\{\om_N\}$ be an acceptable frequency weight.
  Then we have the estimate
  \EQS{\label{eq2.2}
    \|uv\|_{H_\om^s}
    \lesssim \|u\|_{H_\om^s}\|v\|_{L^\I}+\|u\|_{L^\I}\|v\|_{H_\om^s},
  }
  whenever $s>0 $ or $ s\ge 0 $ and $ \omega_N \equiv 1$.
  In particular for any fixed real smooth function $ f $ with $ f(0)=0 $, there exists a real smooth function $ G=G[f] $ that is increasing and non-negative on $ \R_+ $  such that
    \EQS{\label{eq2.2ana}
    \|f(u)\|_{H_\om^s}
    \lesssim   G(\|u\|_{L^\I}) \|u\|_{H_\om^s},
  }
   whenever $s>0 $ or $ s\ge 0 $ and $ \omega_N \equiv 1$.
 \end{lem}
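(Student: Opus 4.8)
The plan is to prove the two estimates \eqref{eq2.2} and \eqref{eq2.2ana} in turn, with \eqref{eq2.2} serving as the basic building block. First I would establish the bilinear product estimate \eqref{eq2.2}. The idea is to decompose both factors using the Littlewood--Paley projectors $P_N$ and examine the frequency of the product $P_N(uv)$. Writing $uv = \sum_{N_1,N_2} (P_{N_1}u)(P_{N_2}v)$ and projecting onto frequency $N$, only three regimes contribute: high-low ($N_1 \sim N \gg N_2$), low-high ($N_2 \sim N \gg N_1$), and high-high ($N_1 \sim N_2 \gtrsim N$). In the first regime I place the high-frequency factor $P_{N_1}u$ in $H_\om^s$ and the low-frequency factor $P_{N_2}v$ in $L^\infty$; the acceptable-weight condition $\om_N \le \om_{2N} \le \de \om_N$ ensures that $\om_N \sim \om_{N_1}$ so that the weight on the output matches the weight on the high factor. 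This yields the first term $\|u\|_{H_\om^s}\|v\|_{L^\infty}$. The low-high regime is symmetric and gives the second term.

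The main obstacle will be the high-high regime, where both input frequencies are comparable and much larger than the output frequency $N$. Here there is no comparability $\om_N \sim \om_{N_1}$ to exploit directly, so the weight on the output could in principle be much smaller than the weights on the inputs. This is precisely why the hypothesis $s>0$ (or $\om_N \equiv 1$) is needed: I would bound $\om_N (1\vee N)^s \lesssim \om_{N_1}(1\vee N_1)^s$ using monotonicity of the weight together with the gain $(1\vee N)^s/(1\vee N_1)^s \le 1$ coming from $s\ge 0$, and then sum the resulting geometric-type series in the high frequency $N_1$. The summability over the output frequencies $N \lesssim N_1$ is what forces $s>0$ strictly (when $s=0$ one cannot afford the logarithmic loss unless the weight is trivial, hence the separate clause $s\ge 0$ with $\om_N\equiv 1$). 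Putting the three regimes together, applying Cauchy--Schwarz in the dyadic sums and Minkowski's inequality, yields \eqref{eq2.2}.

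For the composition estimate \eqref{eq2.2ana} I would use the analyticity of $f$ together with $f(0)=0$ to write $f(u) = \sum_{n\ge 1} a_n u^n$ with $\sum_n |a_n| r^n < \infty$ for every $r>0$. Iterating the product estimate \eqref{eq2.2} gives, by a straightforward induction, the bound $\|u^n\|_{H_\om^s} \lesssim n \|u\|_{L^\infty}^{n-1}\|u\|_{H_\om^s}$, where the factor $n$ records the $n$ ways the single $H_\om^s$ norm can be assigned among the $n$ factors, the remaining $n-1$ factors going into $L^\infty$. Summing the Taylor series then gives
\EQQS{
  \|f(u)\|_{H_\om^s} \lesssim \Big(\sum_{n\ge 1} n |a_n| \|u\|_{L^\infty}^{n-1}\Big)\|u\|_{H_\om^s},
}
and I would simply define $G(r) := \sum_{n\ge 1} n |a_n| r^{n-1}$. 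This series converges for all $r\ge 0$ because $f$ (hence $f'$) has infinite radius of convergence, and $G$ is manifestly smooth, nonnegative, and increasing on $\R_+$ since all its coefficients $n|a_n|$ are handled in absolute value. The only point requiring mild care is justifying the interchange of the infinite sum with the norm, which follows from the absolute convergence just noted together with the completeness of $H_\om^s(\T)$; I do not expect any serious difficulty here beyond bookkeeping.
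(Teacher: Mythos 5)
The paper gives no self-contained proof of this lemma (it simply cites [\cite{MT22}, Lemma 2.2]), so your proposal can only be compared with the standard argument behind that citation, which is indeed what you reproduce: a Littlewood--Paley paraproduct decomposition for \eqref{eq2.2} and a Taylor-series iteration for \eqref{eq2.2ana}. Your treatment of \eqref{eq2.2} is correct: the high-low and low-high regimes use $\om_N\sim\om_{N_1}$ (which follows from $\om_N\le\om_{2N}\le\de\om_N$ applied a bounded number of times), the high-high regime uses monotonicity of the weight plus the geometric factor $\bigl((1\vee N)/(1\vee N_1)\bigr)^s$, and this is exactly where $s>0$ enters; the leftover case $s=0$, $\om_N\equiv1$ is simply H\"older's inequality $\|uv\|_{L^2}\le\|u\|_{L^2}\|v\|_{L^\I}$, which you allude to but should state explicitly rather than leave as a parenthetical.

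Two points in the composition part deserve correction, neither fatal. First, the induction claim $\|u^n\|_{H_\om^s}\lesssim n\|u\|_{L^\I}^{n-1}\|u\|_{H_\om^s}$ with an $n$-independent implicit constant is not what iterating \eqref{eq2.2} gives: if $C$ is the constant in \eqref{eq2.2}, the recursion $C_n\le C(C_{n-1}+1)$, $C_1=1$, produces $C_n\sim C^n$, i.e.\ exponential growth in $n$, not the combinatorial factor $n$ suggested by the Leibniz heuristic. This is harmless here precisely because the Taylor series of $f$ has infinite radius of convergence, so $G(r):=\sum_{n\ge1}|a_n|C^n r^{n-1}$ still converges for every $r\ge0$ and retains smoothness, positivity and monotonicity; but the bookkeeping should be done this way (the paper tracks exactly this kind of $C^k$ growth elsewhere, cf.\ the notation \eqref{eq_lesssim}). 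Second, your argument for \eqref{eq2.2ana} uses that $f$ is analytic with infinite radius of convergence, whereas the lemma is worded for a ``smooth'' $f$ with $f(0)=0$. For merely smooth $f$ the Taylor series need not converge to $f$ (or converge at all), and one would need a genuinely different proof, e.g.\ a Meyer/paralinearization-type Moser estimate. In the context of this paper the restriction is immaterial: every application of \eqref{eq2.2ana} (cf.\ Remark \ref{rem_f} and the Taylor expansions in the proof of Proposition \ref{prop_apri}) concerns $f$ satisfying Hypothesis \ref{hyp2}, and the phrase ``in particular'' in the statement indicates that the intended proof is exactly your series argument. Still, you should flag that your proof establishes the analytic case only, not the statement as literally worded.
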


\begin{proof}
  See [\cite{MT22}, Lemma 2.2].
\end{proof}

\begin{lem}
  Assume that $s_1+s_2\ge 0, s_1\wedge s_2\ge s_3, s_3<s_1+s_2-1/2$.
  Then
  \EQS{\label{eq2.3}
    \|uv\|_{H^{s_3}}\lesssim \|u\|_{H^{s_1}}\|v\|_{H^{s_2}}.
  }
  In particular, for $u,v\in H^s(\T)$ with $s>1/2 $ and any fixed real smooth function $f$, there exists a real smooth function $ G=G[f] $ that is increasing and non-negative on $ \R_+ $ such that
   \EQS{\label{eq2.3ana}
    \|f(u)-f(v)\|_{H^\theta}\le G(\|u\|_{H^{s}}+ \|v\|_{H^s})\|u-v\|_{H^{\theta}}
  }
  for $ \theta\in \{0, s-1\} $.
\end{lem}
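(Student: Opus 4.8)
The plan is to establish the bilinear bound \eqref{eq2.3} by a Littlewood--Paley trichotomy and then deduce \eqref{eq2.3ana} from it together with \eqref{eq2.2ana}. For \eqref{eq2.3} I would expand $uv=\sum_{N_1,N_2}P_{N_1}u\,P_{N_2}v$ and, for each output block $P_K(uv)$, use that the convolution structure on $\T$ restricts the contributing pairs $(N_1,N_2)$ to the three regimes high--low ($N_1\sim K\gtrsim N_2$), low--high ($N_2\sim K\gtrsim N_1$), and high--high ($N_1\sim N_2\gtrsim K$). Writing $\|P_{N_1}u\|_{L^2}=N_1^{-s_1}a_{N_1}$ and $\|P_{N_2}v\|_{L^2}=N_2^{-s_2}b_{N_2}$ with $a,b\in\ell^2$ of norms $\|u\|_{H^{s_1}}$, $\|v\|_{H^{s_2}}$, the estimate reduces to bounding the resulting numerical sums in $\ell^2_K$ by $\|a\|_{\ell^2}\|b\|_{\ell^2}$.

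In the high--low regime I would use H\"older together with Bernstein, $\|P_{N_2}v\|_{L^\infty}\lesssim N_2^{1/2}\|P_{N_2}v\|_{L^2}$, to get $\|P_K(P_{N_1}u\,P_{N_2}v)\|_{L^2}\lesssim N_1^{-s_1}N_2^{1/2-s_2}a_{N_1}b_{N_2}$; after inserting $\langle K\rangle^{s_3}\sim N_1^{s_3}$ and summing over $N_2\lesssim N_1$, the bound $s_3\le s_1$ controls the high factor while $s_3<s_1+s_2-\tfrac12$ makes the low factor summable (the inner sum costing at most $\langle N_1\rangle^{(1/2-s_2)_+}$, up to an endpoint logarithm absorbed by the strict inequality). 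The low--high regime is symmetric, now using $s_3\le s_2$. The crux is the high--high regime, where two high frequencies may produce a low output and the previous argument loses; here I would instead estimate the output directly by $\|P_K h\|_{L^2}\lesssim K^{1/2}\|h\|_{L^1}$ and $\|P_Nu\,P_Nv\|_{L^1}\le\|P_Nu\|_{L^2}\|P_Nv\|_{L^2}$, producing the kernel $K^{1/2}N^{-s_1-s_2}a_Nb_N$. Regarding this as an operator acting on $e_N:=a_Nb_N\in\ell^1$, it maps $\ell^1_N\to\ell^2_K$ provided the column norms $\big(\sum_{K\lesssim N}\langle K\rangle^{2s_3}K\big)^{1/2}N^{-s_1-s_2}$ are uniformly bounded; this is precisely where $s_3<s_1+s_2-\tfrac12$ (when $s_3>-\tfrac12$) and $s_1+s_2\ge0$ (when $s_3\le-\tfrac12$) are each used.

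For the corollary \eqref{eq2.3ana} I would write $f(u)-f(v)=(u-v)\,g$ with $g:=\int_0^1 f'(v+\tau(u-v))\,d\tau$, and apply \eqref{eq2.3} with $(s_1,s_2,s_3)=(\theta,s,\theta)$, whose three hypotheses all reduce to $s>1/2$ for each of $\theta=0$ and $\theta=s-1$, to obtain $\|f(u)-f(v)\|_{H^\theta}\lesssim\|u-v\|_{H^\theta}\|g\|_{H^s}$. It then remains to bound $\|g\|_{H^s}$: applying \eqref{eq2.2ana} to the function $f'-f'(0)$ (which vanishes at the origin) and using $H^s(\T)\hookrightarrow L^\infty(\T)$ for $s>1/2$ to control $\|v+\tau(u-v)\|_{L^\infty}$ by $\|u\|_{H^s}+\|v\|_{H^s}$ gives $\|g\|_{H^s}\lesssim G(\|u\|_{H^s}+\|v\|_{H^s})$, the constant $f'(0)$ contributing only a fixed term absorbed into $G$. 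The main obstacle is the high--high summation above; once it is organised as the clean $\ell^1\to\ell^2$ mapping just described, the remaining regimes and the corollary follow routinely.
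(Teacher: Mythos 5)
Your proof is correct, but it is worth noting that the paper does not actually prove this lemma at all: it cites [GLM14, Lemma 3.4] for the product estimate \eqref{eq2.3} and [MT22, Lemma 2.3] for the composition estimate \eqref{eq2.3ana}. What you have written is a self-contained replacement. Your argument for \eqref{eq2.3} is the standard paraproduct trichotomy, and all three hypotheses are used exactly where they should be: $s_3\le s_1\wedge s_2$ in the high--low/low--high regimes, and $s_3<s_1+s_2-\tfrac12$ together with $s_1+s_2\ge 0$ in the high--high regime, where your $\ell^1_N\to\ell^2_K$ column-norm bound is the right way to organize the sum. Your deduction of \eqref{eq2.3ana} via $f(u)-f(v)=(u-v)\int_0^1 f'(v+\tau(u-v))\,d\tau$, the choice $(s_1,s_2,s_3)=(\theta,s,\theta)$ (whose hypotheses indeed reduce to $s>1/2$ for both $\theta=0$ and $\theta=s-1$, the latter being the case where a negative index $s_1=s-1<0$ can occur and where the generality of \eqref{eq2.3} is genuinely needed), and the splitting $f'=f'(0)+(f'-f'(0))$ so that \eqref{eq2.2ana} applies, is exactly the natural route; note that absorbing the constant $f'(0)$ uses that constants lie in $H^s(\T)$, which is fine on the torus but would fail on the line. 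One small imprecision: in the high--high borderline case $s_3=-\tfrac12$ your column norm carries a factor $(\log N)^{1/2}N^{-(s_1+s_2)}$, and the hypothesis $s_1+s_2\ge0$ alone does not kill it; you need the strict inequality $s_3<s_1+s_2-\tfrac12$, which at $s_3=-\tfrac12$ gives $s_1+s_2>0$ and absorbs the logarithm. Since both hypotheses are available, this is a matter of attribution rather than a gap.
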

\begin{proof}
  For \eqref{eq2.3}, see [\cite{GLM14}, Lemma 3.4].
  The proof of \eqref{eq2.3ana} can be found in [\cite{MT22}, Lemma 2.3].
\end{proof}

We will frequently use the following lemma, which can be seen as a variant of the integration by parts.

\begin{lem}\label{lem_comm1}
  Let $N\in 2^{\N}\cup\{0\}$.
  Then,
  \EQQS{
    \bigg|\int_\T \Pi(u,v)wdx\bigg|
    \lesssim\|u\|_{L_x^2}\|v\|_{L_x^2}\|\p_x w\|_{L_x^\I},
  }
  where
  \EQS{\label{def_pi}
    \Pi(u,v):=v \p_x P_N^2u +u \p_x P_N^2v .
  }
\end{lem}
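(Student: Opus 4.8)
The plan is to exploit the key structural feature of $\Pi(u,v)$ defined in \eqref{def_pi}: although each summand $v\p_x P_N^2 u$ and $u\p_x P_N^2 v$ individually loses a derivative, their sum is essentially a total derivative up to a commutator, which should allow us to move the derivative off $u$ and $v$ and onto the smooth weight $w$. Concretely, I would first integrate by parts in the whole expression $\int_\T \Pi(u,v)w\,dx$. Writing $\int_\T (v\p_x P_N^2 u + u\p_x P_N^2 v)w\,dx$, the naive total derivative $\p_x(P_N^2 u\cdot P_N^2 v)$ does not quite appear because the Littlewood--Paley projector sits on only one factor in each term. So the real first step is to symmetrize: I would rewrite $\int_\T v\,\p_x P_N^2 u\, w\,dx$ by integrating by parts to transfer $\p_x$ onto $v w$, producing $-\int_\T P_N^2 u\,\p_x(vw)\,dx = -\int_\T P_N^2 u\,(\p_x v)w\,dx - \int_\T P_N^2 u\, v\,\p_x w\,dx$.

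Next I would treat the two terms of $\Pi$ together after this integration by parts. The term $-\int_\T P_N^2 u\,(\p_x v)w\,dx$ coming from the first summand should combine with the analogous term $-\int_\T (\p_x v)\,P_N^2 u\, w \,dx$ — wait, more carefully: from $\int_\T u\,\p_x P_N^2 v\, w\,dx$ I integrate by parts to get $-\int_\T P_N^2 v\,(\p_x u)w - \int_\T P_N^2 v\, u\,\p_x w$. The point is that the two ``bad'' pieces $-\int_\T P_N^2 u\,(\p_x v)w$ and $-\int_\T P_N^2 v\,(\p_x u)w$ should be recombined using the self-adjointness of $P_N$: moving one copy of $P_N$ across the pairing turns $\int_\T P_N^2 u\,(\p_x v)\,w$ into $\int_\T (P_N u)\,P_N((\p_x v)w)$, and one then commutes $P_N$ past the multiplication by $w$. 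The commutator $[P_N,w]$ gains a derivative on $w$ and is bounded on $L^2$ with norm controlled by $\|\p_x w\|_{L^\I}$ (this is the standard Coifman--Meyer / kernel estimate for a Littlewood--Paley projector against a smooth multiplication), while the remaining genuine derivative $\p_x v$ paired against $P_N u\, w$ can be integrated by parts once more, now legitimately, since $w$ absorbs the derivative. All of the terms with $\p_x$ landing on $w$ are directly estimated by $\|u\|_{L^2}\|v\|_{L^2}\|\p_x w\|_{L^\I}$ via Cauchy--Schwarz and $L^2$-boundedness of $P_N$.

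The main obstacle — and the crux of the lemma — is controlling the commutator term $[P_N, M_w]$, where $M_w$ is multiplication by $w$: one must show $\|[P_N,M_w]g\|_{L^2}\lesssim \|\p_x w\|_{L^\I}\|g\|_{L^2}$ uniformly in $N\in 2^\N\cup\{0\}$. I would establish this by writing $P_N$ as convolution with a kernel $\check\phi_N$ on $\T$, expressing $([P_N,M_w]g)(x)=\int (w(y)-w(x))\check\phi_N(x-y)g(y)\,dy$, and using $|w(y)-w(x)|\le \|\p_x w\|_{L^\I}|x-y|$ together with the bound $\int |x-y|\,|\check\phi_N(x-y)|\,dy \lesssim 1$ (the kernel $\check\phi_N$ has $L^1$ first moment uniformly bounded because $\phi_N$ is a fixed bump rescaled dyadically, so $x\check\phi_N$ is uniformly bounded in $L^1$ on the torus). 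Schur's test then gives the uniform $L^2$ bound. The degenerate case $N=0$, where $\phi_0=\chi(2\cdot)$ is a low-frequency cutoff, is handled identically since it is again convolution against a fixed Schwartz kernel. Assembling these pieces, every term is bounded by $\|u\|_{L^2}\|v\|_{L^2}\|\p_x w\|_{L^\I}$, which completes the proof.
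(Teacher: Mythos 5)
Your overall skeleton---integration by parts, self-adjointness of $P_N$, symmetrization of the two halves of $\Pi$, and a kernel/Schur estimate for the commutator of $P_N$ with multiplication by $w$---is the right idea, and it is essentially the commutator route followed by the proof the paper cites ([\cite{MT22}, Lemma 2.4]): indeed the whole expression can be written compactly as
\[
\int_\T \Pi(u,v)\,w\,dx \;=\; -\int_\T u\,[\p_x P_N^2, M_w]\,v\,dx ,
\]
so that the lemma is exactly a Calder\'on-type commutator bound $\|[\p_x P_N^2,M_w]\|_{L^2\to L^2}\lesssim \|\p_x w\|_{L_x^\I}$, uniformly in $N$. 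However, as written your argument has a genuine gap precisely at this point.

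The gap: after your decomposition, the commutator term you must control is $\int_\T P_N u\,[P_N,M_w](\p_x v)\,dx$, i.e.\ the commutator is applied to $\p_x v$, not to $v$. The estimate you state and prove, $\|[P_N,M_w]g\|_{L^2}\lesssim \|\p_x w\|_{L_x^\I}\|g\|_{L^2}$, is then useless: with $g=\p_x v$ it produces $\|\p_x v\|_{L^2}$ on the right-hand side, which is not controlled by $\|v\|_{L^2}$ (in the lemma $u,v$ are merely $L^2$ functions; the frequency localization sits inside $\Pi$, not on $v$). What is needed is the stronger statement that the commutator absorbs a full derivative,
\[
\|[P_N,M_w]\,\p_x g\|_{L^2}\;\lesssim\;\|\p_x w\|_{L_x^\I}\,\|g\|_{L^2}\qquad\text{uniformly in }N,
\]
and your Schur bound via the first moment $\int_\T |z|\,|\check\phi_N(z)|\,dz\lesssim 1$ does not give this. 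Two standard repairs: (i) prove the displayed estimate directly by integrating by parts inside the kernel representation, which yields the kernel $\check\phi_N'(x-y)\bigl(w(y)-w(x)\bigr)-\check\phi_N(x-y)\,w'(y)$; the second piece is just $-P_N(w'g)$, and the first requires the first moment of the \emph{derivative} of the kernel, $\int_\T|z|\,|\check\phi_N'(z)|\,dz\lesssim 1$ uniformly in $N$ (true by scaling, but a different estimate from the one you prove); or (ii) skip the preliminary integration by parts entirely and apply self-adjointness directly, $\int_\T v\,\p_x P_N^2 u\,w\,dx=\int_\T \p_x P_N u\; P_N(vw)\,dx$, then commute and symmetrize; the commutator terms are now $\int_\T \p_x P_N u\,[P_N,M_w]v\,dx$ and its mirror, and here you must use the \emph{sharp} form of your kernel bound---the first moment is in fact $\lesssim N^{-1}$, not merely $\lesssim 1$---so that $\|[P_N,M_w]\|_{L^2\to L^2}\lesssim N^{-1}\|\p_x w\|_{L_x^\I}$ compensates $\|\p_x P_N u\|_{L^2}\lesssim N\|u\|_{L^2}$ (the case $N=0$ being trivial since $\p_x P_0$ is bounded on $L^2$). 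Either repair closes the argument; without one of them the proof does not go through.
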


\begin{proof}
  See [\cite{MT22}, Lemma 2.4].
\end{proof}

\section{Refined Bilinear Strichartz Estimates}

In this section, we establish refined bilinear Strichartz estimates which play a crucial role in our study.
In the proof of the a priori estimate (Proposition \ref{prop_apri}) and estimate for the difference (Proposition \ref{difdif}), we would like  to simultaneously use the refined bilinear Strichartz estimate \eqref{eq_bistri2} and integration by parts (Lemma \ref{lem_comm1}).
However, Lemma \ref{lem_comm1} is involved with a Fourier multiplier, which does not enable us to use \eqref{eq_bistri2} as we would like.
Therefore, we consider the symbol of $\Pi(u,v)$ (which is defined in \eqref{def_pi}) in more detail and decompose it into two parts.
See Case 3 of $A_1$ in the proof of Proposition \ref{prop_apri}.
This is the reason why we have to state the bilinear estimate \eqref{eq_bistri1.1}  with a Fourier multiplier.
For that purpose, we introduce the following notation:

\begin{defn}
  For $u,v\in L^2(\T)$ and $a\in L^\I(\R^2)$, we set
  \EQS{\label{def_lambda}
    \F_x(\La_a(u,v))(\xi)
    :=\sum_{\xi_1+\xi_2=\xi}a(\xi_1,\xi_2)
      \hat{u}(\xi_1)\hat{v}(\xi_2).
  }
  Remark that when $a\equiv1$, we have $\La_a(u,v)=uv$.
\end{defn}

Let us now state the two main results of this section:
\begin{prop}[Refined bilinear Strichartz I]\label{prop_bistri}
  Let $0<T<1$, $\al\in[1,2]$ and  $N_1,N_2\ge 1$.
 Let also  $f_1,f_2\in L^\I(]0,T[; L^2(\T))$ and let $a\in L^\I(\R^2)$ such that $\|a\|_{L^\I}\lesssim 1$.
  Finally, let $u_1,u_2\in C([0,T];L^2(\T))$ satisfying
  \EQQS{
    \p_t u_j + L_{\al+1}u_j +\p_x f_j=0
  }
  on $]0,T[\times \T$ for $j=1,2$, with $ L_{\al+1}$ satisfying Hypothesis \ref{hyp1}.
  Then
  \EQS{\label{eq_bistri1.1}
    \begin{aligned}
      \| \La_a(P_{N_1} u_1,
       P_{N_2} u_2)\|_{L_{T,x}^2}
        & \lesssim   T^{-\frac 14} \,  (N_1\vee N_2)^{\frac{1}{2}-\frac{\alpha}{4}}
         (\|P_{ N_1} u_1\|_{L^2_{T,x}}+\|P_{ N_1} f_1\|_{L^2_{T,x}})\\
         & \hspace*{20mm}\times(\|P_{N_2} u_2\|_{L_T^\I L^2_x}+\|P_{N_2} f_2\|_{L_T^\I L^2_x}).
    \end{aligned}
  }
    \end{prop}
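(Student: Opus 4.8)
The plan is to prove \eqref{eq_bistri1.1} first for free evolutions and then to transfer it to solutions of the forced equation. Write $N:=N_1\vee N_2$. The cornerstone is the purely linear bilinear bound
\[
  \|\La_a(U_\al(t)P_{N_1}\phi_1,U_\al(t)P_{N_2}\phi_2)\|_{L^2(I\times\T)}
  \lesssim C(N,|I|)\,\|P_{N_1}\phi_1\|_{L^2}\|P_{N_2}\phi_2\|_{L^2}
\]
on a short time interval $I$ whose length $\de$ is a suitably small negative power of $N$ (the window on which the periodic flow mimics the one on the real line). Granting this, I would recover \eqref{eq_bistri1.1} by partitioning $]0,T[$ into intervals $I_j=[t_j,t_{j+1}]$ of length $\de$ and writing, on each of them, Duhamel's formula $P_{N_k}u_k(t)=U_\al(t-t_j)P_{N_k}u_k(t_j)-\int_{t_j}^t U_\al(t-s)\p_x P_{N_k}f_k(s)\,ds$. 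The free part is estimated by the linear bound with data $P_{N_k}u_k(t_j)$, while the Duhamel part is estimated by Minkowski's inequality in $s$ together with the same linear bound applied to $\p_x P_{N_k}f_k(s)$, the extra spatial derivative being compensated by the short length of $I_j$; this is where the norms of $f_1,f_2$ enter the right-hand side. Squaring and summing over $j$, the slow variation of $t\mapsto\|P_{N_1}u_1(t)\|_{L^2}$ turns $\sum_j\|P_{N_1}u_1(t_j)\|_{L^2}^2$ into $\de^{-1}\|P_{N_1}u_1\|_{L^2_{T,x}}^2$ (up to a forcing correction), producing the $L^2_{T,x}$ norm on the first factor, whereas taking the supremum over $j$ of $\|P_{N_2}u_2(t_j)\|_{L^2}$ produces the $L_T^\I L^2_x$ norm on the second; collecting the powers of $\de$, $N$ and $T$ then yields a bound by $T^{-1/4}(N_1\vee N_2)^{1/2-\alpha/4}$.

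The analytic core is thus the linear bilinear bound on a single window. Working on the real line (which approximates $\T$ on such a window) or, equivalently, directly with the periodic Fourier series, I would compute the space--time Fourier transform of the localized product: for fixed output frequency it is a superposition over the input frequency $\xi_1$ of the coefficients $\hat\phi_1(\xi_1)\hat\phi_2(\xi-\xi_1)$ against a bump of width $\de^{-1}$ concentrated on the surface $\tau=p_{\al+1}(\xi_1)+p_{\al+1}(\xi-\xi_1)$. Since $\|a\|_{L^\I}\lesssim1$, the multiplier is bounded pointwise by a constant and plays no further role. By Cauchy--Schwarz the estimate reduces to controlling, uniformly in the output $(\tau,\xi)$, the size of the set of $\xi_1$ for which $|\tau-p_{\al+1}(\xi_1)-p_{\al+1}(\xi-\xi_1)|\lesssim\de^{-1}$, the governing quantity being the derivative of the phase, i.e. the transversality factor $p_{\al+1}'(\xi_1)-p_{\al+1}'(\xi-\xi_1)$. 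When $N_1\not\sim N_2$, Hypothesis \ref{hyp1} gives $|p_{\al+1}'(\xi_1)-p_{\al+1}'(\xi-\xi_1)|\sim N^{\al}$, so this set is very small and a change of variables (on the line) or a lattice--point count (on $\T$) produces a gain of order $N^{-\al/2}$, which is stronger than what is claimed.

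\textbf{The main obstacle is the resonant regime} $N_1\sim N_2$, which is exactly the interaction of three comparable high frequencies responsible for the threshold $s(\al)=1-\al/4$. There the transversality factor $p_{\al+1}'(\xi_1)-p_{\al+1}'(\xi-\xi_1)$ degenerates, vanishing as $\xi_1\to\xi/2$, so no uniform change of variables is available. The remedy is to exploit the \emph{curvature} of the symbol, namely the second bound $p_{\al+1}''\sim\xi^{\al-1}$ of Hypothesis \ref{hyp1}: near the critical point $\xi_1=\xi/2$ the phase is nondegenerate of second order, so a stationary--phase (coarea) argument shows that the contributing set of $\xi_1$ has length only of order $(\de^{-1}N^{-(\al-1)})^{1/2}$. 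On $\T$ this length has to be converted into a count of integer frequencies $\xi_1$, which is precisely the type of periodic bilinear analysis introduced by Hani \cite{H12} in the wake of \cite{BGT1,KT1}; the square root coming from the curvature degrades the gain from $N^{-\al/2}$ to $N^{-\al/4}$ and, after re--summation over the windows, accounts both for the exponent $\tfrac12-\tfrac{\al}{4}$ and for the time loss $T^{-1/4}$. Keeping the stationary--phase bound uniform in $\xi$, handling the integer--point count in the resonant window, and verifying that the periodic corrections to the real--line picture are harmless are the delicate points; once the single--window estimate is in hand, the reduction described above, including the bookkeeping of the forcing terms, is routine.
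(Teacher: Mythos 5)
Your overall architecture matches the paper's: chop $]0,T[$ into $\sim N_1\vee N_2$ windows of length $\sim(N_1\vee N_2)^{-1}T$, prove a bilinear estimate for free evolutions on a single window by a counting argument driven by the curvature $p_{\al+1}''\sim\xi^{\al-1}$, transfer to the forced equation by Duhamel plus Minkowski, and resum over windows (the paper chooses in each window the point where $\|P_{N_1}u_1(t)\|_{L_x^2}$ attains its minimum, which makes your ``slow variation'' step trivial rather than something needing a forcing correction). Your exponent bookkeeping is also consistent with the claimed bound $T^{-\frac14}(N_1\vee N_2)^{\frac12-\frac{\al}{4}}$.

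However, there is a genuine gap in the core counting step, and it is not among the ``delicate points'' you list: you treat the resonant regime $N_1\sim N_2$ as if the only degeneracy were the same-sign stationary point $\xi_1=\xi/2$, where the curvature bound $|p_{\al+1}''(\xi_1)+p_{\al+1}''(\xi-\xi_1)|\gtrsim N^{\al-1}$ rescues the count. But the convolution also contains opposite-sign pairs $\xi_1>0>\xi-\xi_1$ (two high frequencies producing a low output frequency $\xi$), and there the curvature cancels instead of adding: since $p_{\al+1}$ is odd, $p_{\al+1}''$ is odd, hence $p_{\al+1}''(\xi_1)+p_{\al+1}''(\xi-\xi_1)=p_{\al+1}''(\xi_1)-p_{\al+1}''(\xi_1-\xi)=O(|\xi|N^{\al-2})$, while the first derivative of the phase is $O(|\xi|N^{\al-1})$; both degenerate as $\xi\to0$. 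Concretely, for KdV ($p_3(\xi)=\xi^3$) at output frequency $\xi=0$ the phase $\tau-p_3(\xi_1)-p_3(-\xi_1)\equiv\tau$ is constant in $\xi_1$, so whenever $|\tau|\lesssim L$ the contributing set is the whole dyadic block, of cardinality $\sim N$ --- vastly larger than the $(\de^{-1}N^{-(\al-1)})^{1/2}$ you claim uniformly in $\xi$. So the single-window estimate, as you set it up, is false in this regime (and indeed the stated bound can fail for complex-valued data). The paper's remedy, which your proposal never invokes, is Bourgain's positive-frequency reduction \eqref{no2}: because $u_1,u_2$ are real-valued, one may replace Fourier coefficients by their moduli, split $w_i=P^+w_i+P^-w_i$, and use $P^-w_i=\overline{P^+w_i}$ together with the pointwise identity $|f\bar g|=|fg|$ to convert every mixed-sign product into a same-sign one of equal $L^2$ norm; only after this reduction do both second derivatives have the same sign, so that the curvature count of Lemma \ref{lem_counting2} (your stationary-phase bound) applies uniformly in $(\tau,\xi)$. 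This is precisely why Proposition \ref{prop_stri1_2} is stated for real-valued functions; without this step your argument cannot close.
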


\begin{rem}\label{rem_compari}
  In \cite{MT22}, we used the refined Strichartz estimates (Proposition 3.5 in \cite{MT22}) for resonant interactions, which roughly speaking claims
  \EQS{\label{eq_stri}
    \|P_N u\|_{L_{T,x}^4}
    \lesssim N^{\frac{1}{4(\al+1)}}
    (\|P_N u\|_{L_{T}^4 L_x^2}+\|P_N f\|_{L_{T}^4 L_x^2}),
  }
  where $u$ satisfies the assumption in Proposition \ref{prop_bistri}.
  We can view \eqref{eq_bistri1.1} as a bilinear improvement of \eqref{eq_stri} since
  \EQQS{
    \frac{2}{4(\al+1)}-\bigg(\frac 12 -\frac \al4\bigg)
    =\frac{\al(\al-1)}{4(\al+1)}\ge0
  }
  for $\al\in[1,2]$.
  Note that \eqref{eq_bistri1.1} does not improve \eqref{eq_stri} when $\al=1$.
  Recall that Theorem \ref{theo1} with $\al=1$ is exactly the same as our previous result [\cite{MT22}, Theorem 1.1] when $\al=1$.
\end{rem}

Now, it is well-known  that we can obtain a better estimate when one of two frequencies is dominantly large such as $N_1\gg N_2$.

\begin{prop}[Refined bilinear Strichartz II]\label{prop_bistri2}
  Let $0<T<1$  and $\al\in[1,2]$ and $N_1\vee N_2\gg N_1\wedge N_2\ge 1$.
  Let $f_1,f_2\in L^\I(]0,T[; L^2(\T))$ and let $a\in L^\I(\R^2)$ such that $\|a\|_{L^\I}\lesssim 1$.
   Let $u_1,u_2\in C([0,T];L^2(\T))$ satisfying
  \EQQS{
    \p_t u_j + L_{\al+1}u_j +\p_x f_j=0
  }
  on $]0,T[\times \T$ for $j=1,2$, with $ L_{\al+1}$ satisfying Hypothesis \ref{hyp1}.
  Then for any $ 0\le \theta\le 1 $ it holds
  \EQS{\label{eq_bistri2}
    \begin{aligned}
      \|\La_a(P_{N_1} u_1,&
        P_{N_2} u_2)\|_{L_{T,x}^2}  \lesssim T^{\frac{\theta-1}{2}} (N_1\wedge N_2)^{\frac \theta2} \\
     &\times(\|P_{N_1}u_1\|_{L^2_{T,x}}+\|P_{N_1}f_1\|_{L^2_{T,x}})(\|P_{N_2} u_2\|_{L_T^\infty L^2_x}+\|P_{N_2} f_2\|_{L_T^\infty L^2_x}).
    \end{aligned}
  }
\end{prop}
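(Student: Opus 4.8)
The plan is to deduce the whole family $0\le\theta\le1$ by interpolating between the two endpoints $\theta=0$ and $\theta=1$. Writing $X$ for the product $(\|P_{N_1}u_1\|_{L^2_{T,x}}+\|P_{N_1}f_1\|_{L^2_{T,x}})(\|P_{N_2}u_2\|_{L_T^\infty L^2_x}+\|P_{N_2}f_2\|_{L_T^\infty L^2_x})$ appearing on the right of \eqref{eq_bistri2}, both endpoint bounds control the \emph{same} quantity $\|\La_a(P_{N_1}u_1,P_{N_2}u_2)\|_{L^2_{T,x}}$ by a prefactor times $X$, so the left-hand side is at most $\min\{(N_1\wedge N_2)^{1/2},T^{-1/2}\}\,X\le (N_1\wedge N_2)^{\theta/2}\,T^{-(1-\theta)/2}X$, using the elementary inequality $\min(A,B)\le A^{\theta}B^{1-\theta}$. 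This is exactly \eqref{eq_bistri2}. The endpoint $\theta=1$ is elementary: the lower-frequency factor is supported on $O(N_1\wedge N_2)$ integers, so Cauchy--Schwarz in the convolution variable gives $\|\La_a(F,G)\|_{L^2_x}\lesssim (N_1\wedge N_2)^{1/2}\|F\|_{L^2}\|G\|_{L^2}$ pointwise in time (here $\|a\|_{L^\infty}\lesssim1$ is harmless), and Hölder in time, placing $P_{N_2}u_2$ in $L_T^\infty L^2_x$ and $P_{N_1}u_1$ in $L^2_{T,x}$, gives the claim with no loss in $T$.

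The core is the endpoint $\theta=0$. First I would establish a free bilinear Strichartz estimate: for $\phi_1,\phi_2$ with Fourier support in $\{|\xi|\sim N_j\}$ and an interval $I$ of length $\delta$,
\[
\|\La_a(U_\al(\cdot)\phi_1,U_\al(\cdot)\phi_2)\|_{L^2_{I\times\T}}\lesssim \max(\delta,M^{-\al})^{1/2}\,\|\phi_1\|_{L^2}\|\phi_2\|_{L^2},\qquad M:=N_1\vee N_2 .
\]
This follows from Plancherel in $(t,\xi)$ after a time cutoff, Cauchy--Schwarz, and counting the integers $\xi_1$ with $p_{\al+1}(\xi_1)+p_{\al+1}(\xi-\xi_1)$ in a window of length $\sim\delta^{-1}$: the transversality $|p_{\al+1}'(\xi_1)-p_{\al+1}'(\xi-\xi_1)|\sim M^\al$, forced by $M\gg N_1\wedge N_2$ together with Hypothesis \ref{hyp1}, bounds this count by $\max(1,\delta^{-1}M^{-\al})$.

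Next I would transfer this to the solutions. Partition $[0,T]$ into intervals $I_j$ of length $\delta$, taking $\delta=M^{-\al}$ when $M^{-\al}\le T$ and $\delta=TM^{-\al}$ otherwise. On each $I_j$ I use Duhamel, $P_{N_j}u_j(t)=U_\al(t-t_j)P_{N_j}u_j(t_j)-\int_{t_j}^tU_\al(t-s)P_{N_j}\p_x f_j(s)\,ds$, and split $\La_a(P_{N_1}u_1,P_{N_2}u_2)$ into free--free, free--Duhamel, Duhamel--free and Duhamel--Duhamel parts. The free--free part is bounded by the free estimate, after which the $\ell^2$-sum over $j$ of the endpoint norms $\|P_{N_1}u_1(t_j)\|_{L^2}^2$ is converted into $\delta^{-1}\|P_{N_1}u_1\|_{L^2_{T,x}}^2$ plus a forcing error through the energy identity $\tfrac{d}{dt}\|P_{N_1}u_1\|_{L^2}^2=-2\RE\langle P_{N_1}\p_x f_1,P_{N_1}u_1\rangle$ (the dispersive term drops since $L_{\al+1}$ is skew-adjoint). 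The three Duhamel parts are handled by Minkowski's inequality followed by the same free estimate. Summing over $j$ and tracking the powers of $\delta$, $T$ and $M$ then produces the bound $T^{-1/2}X$.

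The point I expect to be most delicate is the derivative $\p_x f_j$ in the Duhamel terms, which costs a factor $N_j\le M$. This is precisely compensated by the dispersive gain of the free estimate: in every such term the governing product is $\max(\delta,M^{-\al})\,N_j\lesssim M^{-\al}\cdot M=M^{1-\al}\le1$ for $\al\ge1$, so no derivative loss survives and the forcing enters the right-hand side only through $\|f_j\|$ without derivatives. The main bookkeeping hurdle will be checking that this cancellation persists uniformly in both regimes of $\delta$ (whether or not $M^{-\al}\le T$) and that the $\delta^{-1}$ from the energy identity combines with $\max(\delta,M^{-\al})$ to yield exactly $T^{-1}$; the remaining estimates are routine.
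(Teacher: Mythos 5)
Your proposal is correct, but it reaches \eqref{eq_bistri2} by a genuinely different route from the paper's. The paper proves Proposition \ref{prop_bistri2} by literally rerunning the proof of Proposition \ref{prop_bistri} (partition of $[0,T]$ into $\sim N_1\vee N_2$ intervals of length $(N_1\vee N_2)^{-1}T$, endpoints $c_j$ chosen where $\|P_{N_1}u_1(t)\|_{L_x^2}$ is minimal, Duhamel splitting into four pieces) with the free-solution input \eqref{cor2} replacing \eqref{cor1}, and the parameter $\theta$ is carried through the whole short-time machinery, i.e.\ through the variant \eqref{bil22} of \eqref{bil2}, Lemma \ref{lem_short1}, Proposition \ref{prop_short4} and Corollary \ref{cor_free}. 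You instead: (i) dispose of $\theta$ at the very end by interpolating the two endpoints via $\min(A,B)\le A^\theta B^{1-\theta}$, which is valid because both endpoint bounds control the same left-hand side by the same product $X$, and your $\theta=1$ endpoint (Cauchy--Schwarz on the lower-frequency support, then H\"older in time with the correct asymmetric pairing $L^2_{T,x}\times L_T^\I L^2_x$) is indeed elementary; (ii) prove the $\theta=0$ free bilinear estimate directly by time cutoff, Plancherel and the first-derivative counting Lemma \ref{lem_counting}, exploiting the transversality $|p_{\al+1}'(\xi_1)-p_{\al+1}'(\xi_2)|\gtrsim (N_1\vee N_2)^{\al}$ available precisely because $N_1\vee N_2\gg N_1\wedge N_2$ --- writing $M=N_1\vee N_2$, your factor $\max(\delta,M^{-\al})^{1/2}$ coincides with what \eqref{bil2} yields for time-localized free solutions, so this is a faithful, self-contained substitute for Corollary \ref{cor_free} that avoids the Besov-modulation spaces $X^{0,\frac12,1}$ and the appendix altogether; (iii) use intervals of length $\delta\sim M^{-\al}$ (or $TM^{-\al}$) instead of $M^{-1}T$, so that each derivative in $\p_x f_j$ is absorbed by the dispersive gain, $\max(\delta,M^{-\al})N_j\lesssim M^{1-\al}\le 1$, rather than by the interval length; and your energy-identity control of the endpoint values replaces the paper's argmin choice of $c_j$, its forcing error being harmless for the same reason. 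I checked the bookkeeping in both regimes of $\delta$ and in all four Duhamel pieces (the worst, Duhamel--Duhamel, carries $\max(\delta,M^{-\al})N_1^2N_2^2\delta^3\le M^{4-4\al}\le 1$), and it does close to $T^{-1}X^2$. What the paper's route buys is uniformity: the same $X^{0,\frac12,1}$ framework also proves Proposition \ref{prop_bistri}, where the frequencies are comparable, first-order transversality fails, and one must instead use the second-derivative counting Lemma \ref{lem_counting2} on dyadic modulation blocks, so your direct counting argument would not extend to that case. What your route buys is economy and a small gain in precision: it proves the statement literally as written for arbitrary $N_1,N_2$, whereas the paper's reduction ``WLOG $N_1\ge N_2$'' (the right-hand side being asymmetric) strictly yields the version in which the higher-frequency factor carries the $L^2_{T,x}$ norm --- which is, in fact, the form used in Sections 4--5. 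In a written version you should only add the two routine points you implicitly skipped: the low-frequency cases $N_1\wedge N_2\lesssim \xi_0$ of Hypothesis \ref{hyp1}, where transversality is replaced by the trivial count $O(\xi_0)$, and the Schwartz tails of the time cutoff in the Plancherel/counting step.
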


\begin{rem}\label{rem_f1}
  We need to introduce the parameter $ \theta>0 $ in the above proposition since a factor $ T^{-\frac{1}{2}} $ in \eqref{eq_bistri2} will not enable us
  to get a positive power of $T$ in the right hand side of the energy estimates  \eqref{P} and \eqref{eq_difdif}. See for instance Case 3 of $J_t^{A_1}$ in the proof of Proposition \ref{prop_apri}. It is worth noticing that taking $ \theta>0 $ causes a loss of a factor $N_{\textrm{min}}^{\frac \theta2}$  that is anyway allowed since we work with $ s>1/2$.
\end{rem}

\begin{rem}\label{rem_f}
  In Proposition \ref{prop_apri}, we will apply Propositions \ref{prop_bistri} and \ref{prop_bistri2} with $u_1=u_2=u$ and $f_1(t,x)=f_2(t,x)=f(u(t,x))-f(0)$, where $f$ satisfies Hypothesis \ref{hyp2}.
  Notice that $P_N f(0)=0$ when $N\ge 1$.
  This modification allows us to use \eqref{eq2.2ana} after summing over $N$.
  See \eqref{def_U} and \eqref{def_Y}.
\end{rem}

The above refined bilinear estimates are based on the following classical bilinear estimates:

\begin{prop}\label{prop_stri1_2}
 Let $\al\in[1,2]$ and $a\in L^\I(\R^2)$ with $ \|a\|_{L^\I} \le 1 $.
  Then there exists $C=C(\xi_0)>0$  such that for any real-valued functions $u_1,u_2 \in L^2(\R_t\times\T_x)$, any $N_1,N_2,L_1,L_2\ge 1$ it holds
  \EQS{\label{bil1}
    \begin{aligned}
      &\|\La_a(Q_{\le L_1} P_{N_1} u_1, Q_{\le L_2} P_{N_2} u_2)\|_{L_{t,x}^2}\\
      &\le C(L_1\wedge L_2)^{\frac 12}
        \Biggr\{\frac{(L_1\vee L_2)^{\frac 14}}{(N_1\vee N_2)^{\frac{\al-1}{4}}}+1\Biggr\}
        \|Q_{\le L_1} P_{N_1} u_1\|_{L_{t,x}^2}\|Q_{\le L_2} P_{N_2} u_2\|_{L_{t,x}^2}.
    \end{aligned}
  }
  Moreover,
  \EQS{\label{bil2}
    \begin{aligned}
      &\|\La_a(Q_{\le L_1} P_{N_1} u_1,
        Q_{\le L_2}P_{N_2}u_2)\|_{L_{t,x}^2}\\
      &\le C(L_1\wedge L_2)^{\frac 12}
      \min \Biggr(\frac{(L_1\vee L_2)^{\frac 12}}{(N_1\vee N_2)^{\frac{\al}{2}}}+1, (N_1\wedge N_2)^{\frac 12} \Biggl)\\
      &\quad\times
        \|Q_{\le L_1}P_{N_1}u_1\|_{L_{t,x}^2}
        \|Q_{\le L_2}P_{N_2}u_2\|_{L_{t,x}^2}.
    \end{aligned}
  }
 whenever $ N_1\vee N_2 \gg N_1\wedge N_2$.
 \end{prop}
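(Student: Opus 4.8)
The plan is to move to the space--time Fourier side and reduce both inequalities to counting lattice points in a sublevel set of the resonance function $\Phi(\xi_1,\xi_2):=p_{\al+1}(\xi_1)+p_{\al+1}(\xi_2)$. Set $w_j:=Q_{\le L_j}P_{N_j}u_j$, so that $\ti w_j$ is supported in $\{|\xi_j|\sim N_j,\ |\tau_j-p_{\al+1}(\xi_j)|\lesssim L_j\}$, and note that by Plancherel $\|\La_a(w_1,w_2)\|_{L^2_{t,x}}=\|\ti w_1\ast_a\ti w_2\|_{L^2_{\tau,\xi}}$, where $\ast_a$ is the $a$-weighted convolution in $(\tau,\xi)$. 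A Cauchy--Schwarz in the convolution variable $(\tau_1,\xi_1)$ (this is the only place $\|a\|_{L^\infty}\lesssim1$ is used) gives
\[
\|\La_a(w_1,w_2)\|_{L^2_{t,x}}^2\lesssim\Big(\sup_{\tau,\xi}|E(\tau,\xi)|\Big)\,\|w_1\|_{L^2}^2\|w_2\|_{L^2}^2,
\]
with $E(\tau,\xi)$ the set of admissible $(\tau_1,\xi_1)$. Performing the $\tau_1$-integration first bounds $|E(\tau,\xi)|\lesssim(L_1\wedge L_2)\,\mathcal N(\tau,\xi)$, where $\mathcal N(\tau,\xi)=\#\{\xi_1:\ |\xi_1|\sim N_1,\ |\xi-\xi_1|\sim N_2,\ |\Phi(\xi_1,\xi-\xi_1)-\tau|\lesssim L_1\vee L_2\}$. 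Everything thus rests on counting integers in a sublevel set of $\xi_1\mapsto\Phi(\xi_1,\xi-\xi_1)$, which I would control through
\[
\partial_{\xi_1}\Phi=p_{\al+1}'(\xi_1)-p_{\al+1}'(\xi-\xi_1),\qquad\partial_{\xi_1}^2\Phi=p_{\al+1}''(\xi_1)+p_{\al+1}''(\xi-\xi_1),
\]
using Hypothesis \ref{hyp1}; the $O(\xi_0)$ frequencies with $|\xi_j|<\xi_0$ contribute only a constant $C(\xi_0)$ absorbed into the ``$+1$''.

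For \eqref{bil2} the hypothesis $N_1\vee N_2\gg N_1\wedge N_2$ makes the interaction transversal: since $p_{\al+1}'$ is even and increases for large argument, the two terms in $\partial_{\xi_1}\Phi$ have well-separated moduli and cannot cancel, so $|\partial_{\xi_1}\Phi|\sim(N_1\vee N_2)^{\al}$ on the whole range. Hence consecutive values of $\Phi$ are spaced by at least $(N_1\vee N_2)^{\al}$ and $\mathcal N\lesssim(L_1\vee L_2)(N_1\vee N_2)^{-\al}+1$. Together with the trivial count $\mathcal N\lesssim N_1\wedge N_2$ (the $\xi_1$-range has length $\lesssim N_1\wedge N_2$) and the Cauchy--Schwarz reduction, this is exactly the minimum in \eqref{bil2}. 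This case needs no further idea.

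Estimate \eqref{bil1} is the delicate one. When $\xi_1$ and $\xi-\xi_1$ have the same sign, $p_{\al+1}''(\xi_1)$ and $p_{\al+1}''(\xi-\xi_1)$ share their sign, so $\partial_{\xi_1}^2\Phi$ does not vanish and $|\partial_{\xi_1}^2\Phi|\gtrsim(N_1\vee N_2)^{\al-1}$. The elementary sublevel bound --- if a function $g$ is convex with $g''\ge\mu$ then a window of length $W$ meets $\lesssim\sqrt{W/\mu}+1$ integers --- then yields $\mathcal N\lesssim(L_1\vee L_2)^{1/2}(N_1\vee N_2)^{-(\al-1)/2}+1$, which after the Cauchy--Schwarz reduction gives \eqref{bil1}.

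The main obstacle is the remaining tangential configuration: $\xi_1$ and $\xi-\xi_1$ of opposite sign with comparable moduli $N_1\sim N_2$ and small output $\xi$. As $p_{\al+1}''$ is odd, here $\partial_{\xi_1}^2\Phi=p_{\al+1}''(\xi_1)+p_{\al+1}''(\xi-\xi_1)$ nearly cancels --- it vanishes identically on the diagonal $\xi=0$, where $\Phi\equiv0$ --- and $\partial_{\xi_1}\Phi$ is small as well, so $\mathcal N$ can be as large as $N_1\wedge N_2$ and the Cauchy--Schwarz reduction overcounts by a power of $N_1\vee N_2$: it cannot reach \eqref{bil1}. I would handle this regime not by counting but through the modulation structure. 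On the diagonal every admissible $\xi_1$ produces the same resonance value $\Phi=0$, so all the bilinear contributions are confined to a single modulation slab of width $\sim L_1\vee L_2$; a triangle inequality in $\xi_1$, followed by Young's inequality in the modulation variable and a Cauchy--Schwarz over $\xi_1$, then recovers the clean bound $(L_1\wedge L_2)^{1/2}\|w_1\|_{L^2}\|w_2\|_{L^2}$, i.e. the ``$+1$'' term, with no lossy frequency count. The technical heart is to interpolate between this slab estimate and the curvature count so as to cover all intermediate output frequencies $0\le|\xi|\lesssim(L_1\vee L_2)^{1/2}(N_1\vee N_2)^{-(\al-1)/2}$ and thereby close \eqref{bil1} uniformly.
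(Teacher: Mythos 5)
Your reduction (Plancherel, Cauchy--Schwarz in $(\tau_1,\xi_1)$, then counting $\xi_1$ in sublevel sets of $g(\xi_1)=\tau-p_{\al+1}(\xi_1)-p_{\al+1}(\xi-\xi_1)$) is exactly the paper's framework, and your treatment of \eqref{bil2} (first-derivative bound $|g'|\gtrsim(N_1\vee N_2)^{\al}$ plus the trivial count $N_1\wedge N_2$) and of the same-sign part of \eqref{bil1} (curvature bound $|g''|\gtrsim(N_1\vee N_2)^{\al-1}$ plus the $\sqrt{W/\mu}+1$ counting lemma) matches the paper's Lemmas \ref{lem_counting} and \ref{lem_counting2}. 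The genuine gap is the opposite-sign regime of \eqref{bil1}, which you correctly identify as the obstruction but do not resolve: your ``modulation slab'' argument is only carried out at the single output frequency $\xi=0$, and the step you yourself call ``the technical heart'' --- covering all output frequencies $0<|\xi|\lesssim(L_1\vee L_2)^{1/2}(N_1\vee N_2)^{-(\al-1)/2}$, where the sup-based Cauchy--Schwarz count degenerates continuously from $O(1)$ to $N_1\wedge N_2$ --- is declared, not performed. A per-$\xi$ bound of the form $(L_1\wedge L_2)^{1/2}\|w_1\|_{L^2}\|w_2\|_{L^2}$ cannot simply be summed in $\xi$ over this range, so as written \eqref{bil1} is not proved. (For the record, your route is salvageable: split the output frequencies at $|\xi|\sim(L_1\vee L_2)^{1/2}(N_1\vee N_2)^{-(\al-1)/2}$, use the per-$\xi$ Young-type bound times the square root of the cardinality of the small-$\xi$ range, and use the first-derivative count $|g'|\sim|\xi|(N_1\vee N_2)^{\al-1}$ for the large-$\xi$ range; but none of this is in your text.)

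What you missed is that the proposition's hypothesis that $u_1,u_2$ are \emph{real-valued} is there precisely to kill this configuration, and the paper uses it through Bourgain's trick. One first majorizes $|\F\La_a(w_1,w_2)|$ by the corresponding expression with $|\ti{w}_j|$ in place of $\ti{w}_j$; since $u_j$ is real-valued, $|\ti{w}_j|$ is even, so $w_j^\sharp:=\F^{-1}(|\ti{w}_j|)$ is real-valued and even. Writing $w_j^\sharp=P^+w_j^\sharp+P^-w_j^\sharp$ and using $P^-w_j^\sharp=\ov{P^+w_j^\sharp}$, one gets
\begin{equation*}
\|P^+w_1^\sharp\,P^-w_2^\sharp\|_{L_{t,x}^2}
=\|P^+w_1^\sharp\,\ov{P^+w_2^\sharp}\|_{L_{t,x}^2}
=\|P^+w_1^\sharp\,P^+w_2^\sharp\|_{L_{t,x}^2},
\end{equation*}
since the moduli of the two products coincide pointwise; hence all four sign combinations are controlled by the one where both spatial frequencies are non negative. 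In that region $\xi_1$ and $\xi-\xi_1$ automatically have the same sign, your curvature count applies verbatim, and \eqref{bil1} closes with no interpolation at all. So the paper's proof and yours agree on the counting core, but the paper replaces your missing (and hardest) step by a three-line symmetry reduction that exploits real-valuedness --- a hypothesis your argument never invokes.
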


To prove Proposition \ref{prop_stri1_2}, we need the two following technical lemmas.

\begin{lem}\label{lem_counting}
  Let $I$ and $J$ be two intervals on the real line and $g\in C^1(J;\R)$.
  Then
  \EQQS{
    \# \{x\in J\cap\Z;g(x)\in I\}\le\frac{|I|}{\inf_{x\in J}|g'(x)|}+1.
  }
\end{lem}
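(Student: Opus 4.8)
The plan is to argue by the mean value theorem after a monotonicity reduction. First I would dispose of the degenerate case: if $m:=\inf_{x\in J}|g'(x)|=0$ then the right-hand side is infinite (reading $|I|/0=+\infty$) and there is nothing to prove, so I may assume $m>0$. Since $g\in C^1(J)$ and $J$ is an interval, $g'$ is continuous and never vanishes on $J$, hence keeps a constant sign; consequently $g$ is strictly monotone on $J$. Replacing $(g,I)$ by $(-g,-I)$ if necessary, I may assume $g$ is strictly increasing with $g'\ge m$ throughout $J$.

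Next I would enumerate the relevant integers. Write the elements of $\{x\in J\cap\Z\ ;\ g(x)\in I\}$ in increasing order as $x_1<x_2<\dots<x_n$; the claim is that $n\le |I|/m+1$. Because the $x_i$ are distinct integers we have $x_{i+1}-x_i\ge 1$, so the mean value theorem produces $\x_i\in(x_i,x_{i+1})\subset J$ with
\EQQS{
  g(x_{i+1})-g(x_i)=g'(\x_i)\,(x_{i+1}-x_i)\ge m .
}
Summing this telescoping inequality over $i=1,\dots,n-1$ gives $g(x_n)-g(x_1)\ge (n-1)m$, while $g(x_1),g(x_n)\in I$ forces $g(x_n)-g(x_1)\le|I|$. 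Combining the two yields $(n-1)m\le|I|$, i.e. $n\le|I|/m+1$, which is the desired bound.

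There is no genuine obstacle here: the statement is a standard counting lemma in the spirit of Koch--Tzvetkov, and the only points requiring a little care are the treatment of the trivial case $m=0$ and the observation that the uniform lower bound on $|g'|$ already forces $g'$ to have constant sign, so that $g$ is monotone and the selected integer points map to a strictly monotone, hence adequately spread-out, sequence of values inside $I$.
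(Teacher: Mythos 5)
Your proof is correct and complete: the reduction to the case $\inf_J|g'|>0$, the observation that a nonvanishing continuous $g'$ has constant sign so that $g$ is strictly monotone, and the telescoping mean value theorem bound $(n-1)\inf_J|g'|\le|I|$ together give exactly the claimed estimate. The paper does not reproduce a proof of this lemma (it simply cites Lemma 2 of \cite{ST01}), and your argument is essentially the standard one given in that reference, so there is nothing to add.
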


\begin{proof}
  See Lemma 2 in \cite{ST01}.
\end{proof}

\begin{lem}\label{lem_counting2}
  Let $I$ and $J$ be two intervals on $\R$ and let $\theta>0$.
  Assume that $g\in C^2(\R)$ satisfies $g''(x)\gtrsim \theta$ for $x\in J$.
  Then,
  \EQQS{
    \# \{x\in J\cap\Z; g(x)\in I\}
    \lesssim \frac{|I|^{\frac 12}}{\theta^{\frac 12}}+1
  }
\end{lem}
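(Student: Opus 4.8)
The plan is to convert the hypothesis $g''(x)\gtrsim\theta$ into a convexity statement and thereby reduce the count to monotone pieces of $g$. Since $g''\ge c\theta>0$ on $J$ for some fixed $c>0$, the derivative $g'$ is strictly increasing on $J$, hence vanishes at most once there; call $x_*$ its zero if it exists. Splitting $J$ at $x_*$ into $J_-=J\cap(-\infty,x_*]$ and $J_+=J\cap[x_*,\infty)$ (one of which may be empty), $g$ is monotone on each subinterval: nonincreasing on $J_-$ and nondecreasing on $J_+$. It therefore suffices to bound the number of integer solutions of $g(x)\in I$ on each of these two monotone pieces by $\lesssim |I|^{1/2}\theta^{-1/2}+1$ and to add the two estimates.

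On the increasing piece $J_+$, I would enumerate the integers $x\in J_+$ with $g(x)\in I$ as $y_1<y_2<\cdots<y_m$ and play a telescoping upper bound coming from $I$ against a lower bound for the increments coming from the curvature. On the one hand, since every $g(y_k)$ lies in $I$, the sum $\sum_{k=1}^{m-1}\bigl(g(y_{k+1})-g(y_k)\bigr)=g(y_m)-g(y_1)$ is at most $|I|$. On the other hand, because $g'$ is increasing and consecutive integers satisfy $y_{k+1}-y_k\ge1$, each increment obeys $g(y_{k+1})-g(y_k)=\int_{y_k}^{y_{k+1}}g'(t)\,dt\ge g'(y_k)$. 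Integrating $g''\ge c\theta$ and using $g'(y_1)\ge0$ on $J_+$ gives $g'(y_k)=g'(y_1)+\int_{y_1}^{y_k}g''(t)\,dt\ge c\theta(y_k-y_1)\ge c\theta(k-1)$, the last step again exploiting the integer gap $y_k-y_1\ge k-1$. Summing these lower bounds yields $|I|\ge c\theta\sum_{k=1}^{m-1}(k-1)=c\theta\tfrac{(m-1)(m-2)}{2}$, so that $m\lesssim |I|^{1/2}\theta^{-1/2}+1$.

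The decreasing piece $J_-$ is handled by the identical argument applied to $-g$ (or by reversing the order of enumeration), producing the same bound, and adding the two contributions gives the claim; the additive $+1$ absorbs the trivial cases $m\le2$, where the curvature argument is vacuous. The point requiring care — and what I would regard as the heart of the matter — is that a single application of the mean value theorem controls only one first difference $g(y_{k+1})-g(y_k)$ and is defeated near $x_*$, where $g'$ is small: the gain of the exponent $\tfrac12$ comes precisely from summing the increments and feeding in the second-derivative lower bound $g'(y_k)\gtrsim\theta(k-1)$, i.e. from using curvature rather than monotonicity alone. This is the second-order analogue of Lemma \ref{lem_counting}, whose first-derivative hypothesis only produces the weaker linear count $|I|/\inf_J|g'|+1$.
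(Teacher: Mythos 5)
Your proof is correct, but it takes a genuinely different route from the paper's. The paper never splits $J$ at the critical point of $g$; instead it splits $\R$ into three regions according to the size and sign of $g'$, with the threshold $(\theta|I|)^{1/2}$, and then applies Lemma \ref{lem_counting} \emph{twice}: once to the function $g'$ itself (on the region $|g'|\le(\theta|I|)^{1/2}$, using $g''\gtrsim\theta$ as the first-derivative lower bound, so that the count is $\lesssim(\theta|I|)^{1/2}/\theta+1$), and once to $g$ (on the regions $\pm g'>(\theta|I|)^{1/2}$, giving $\lesssim|I|/(\theta|I|)^{1/2}+1$); both contributions equal $|I|^{1/2}\theta^{-1/2}+1$, which is exactly the optimization that fixes the threshold. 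Your argument instead exploits convexity directly: monotone splitting at the unique zero of $g'$, then a telescoping bound $|I|\ge\sum_k\bigl(g(y_{k+1})-g(y_k)\bigr)\ge c\theta\sum_k(k-1)\gtrsim\theta m^2$ coming from the arithmetic-progression lower bound $g'(y_k)\gtrsim\theta(k-1)$ on integer points. What each approach buys: the paper's proof is shorter once Lemma \ref{lem_counting} is available and reduces the second-order statement to two instances of the first-order one; yours is self-contained, does not invoke Lemma \ref{lem_counting} at all, and makes transparent where the square root gain over the naive bound originates. One small point of care: on the decreasing piece, applying "the identical argument to $-g$" is not quite literal, since $-g$ is concave rather than convex and its derivative is large at the left end rather than the right; the clean fix is precisely your parenthetical alternative of reversing the enumeration (equivalently, working with $x\mapsto g(-x)$, which is again convex and increasing there), so no gap results.
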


\begin{proof}
  Set $A:=\{x\in J\cap\Z; g(x)\in I\}$.
  We divide $\R$ into three parts:
  \EQQS{
    I_1&:=\{x\in\R; |g'(x)|\le (\theta|I|)^{\frac 12}\},\\
    I_2&:=\{x\in\R; g'(x)> (\theta|I|)^{\frac 12}\},\\
    I_3&:=\{x\in\R; g'(x)< -(\theta|I|)^{\frac 12}\}.
  }
  It is clear that $I_m\cap I_n=\emptyset$ for $m\neq n$ and $I_1\cup I_2\cup I_3=\R$.
  For $I_1$, we see from Lemma \ref{lem_counting} that
  \EQQS{
    \#(A\cap I_1)
    \lesssim \#\{x\in J\cap\Z; |g'(x)|\le (\theta|I|)^{\frac 12}\}
    \lesssim \frac{(\theta|I|)^{\frac 12}}{\inf_{x\in J} g''(x)}+1
    \lesssim \frac{|I|^{\frac 12}}{\theta^{\frac 12}}+1.
  }
  On the other hand, for $I_2$ we again use Lemma \ref{lem_counting} so that
  \EQQS{
    \#(A\cap I_2)
    &\lesssim \# \{x\in J\cap \Z; g(x)\in I, g'(x)>(\theta|I|)^{\frac 12}\}\\
    &\lesssim \frac{|I|}{(\theta|I|)^{\frac 12}}+1
     = \frac{|I|^{\frac 12}}{\theta^{\frac 12}}+1.
  }
  Similarly, we have
  \EQQS{
  \#(A\cap I_3)
  \lesssim \# \{x\in J\cap \Z; h(x)\in -I, h'(x)>(\theta|I|)^{\frac 12}\}
  \lesssim \frac{|I|^{\frac 12}}{\theta^{\frac 12}}+1,
  }
  where we put $h(x):=-g(x)$.
  This completes the proof.
\end{proof}

\begin{proof}[Proof of Proposition \ref{prop_stri1_2}]
  For simplicity, we put $v_j:= \psi_{\le L_j}\phi_{N_i} \hat{u}_j$ for $j=1,2$.
   The Plancherel theorem leads to
  \EQS{\label{no1}
    \begin{aligned}
      &\|\La_a(P_{N_1}Q_{\le L_1}u_1,P_{N_2}
        Q_{\le L_2}u_2)\|_{L_{t,x}^2}^2\\
      &=\sum_{\xi\in \Z}\int_\ta\bigg|\sum_{\xi_1\in\Z}
       \int_{\ta_1}
       a(\xi_1,\xi-\xi_1) v_1(\ta_1,\xi_1)
       v_2(\ta-\ta_1,\xi-\xi_1)d\ta_1 \bigg|^2 d\ta\\
      & \le \sum_{\xi\in\Z}\int_\ta\bigg|
       \sum_{\xi_1\in\Z}\int_{\ta_1}
       | v_1(\ta_1,\xi_1)|
       |v_2(\ta-\ta_1,\xi-\xi_1)| d\ta_1 \bigg|^2 d\ta = \| w_1 w_2 \|_{L_{t,x}^2}^2,
    \end{aligned}
 }
where $ w_i={\mathcal F}^{-1}(|v_i|) $, $ i=1,2$. Note that since  $ u_i $ is real-valued, $ |v_i | $ has to  be an even real valued function that forces $ w_i$ to be also even and real-valued.
It follows from $N_1,N_2\ge 1$ that $P_0 w_1=P_0w_2=0$.
Then we can use the trick introduced in \cite{B93} that consists in rewriting $ w_i $ as $ P^+ w_i +P^- w_i$ (see Subsection \ref{sub_notation} for the definitions of $P^+$ and $P^-$), and observing that since  $ P^- w_i(t,x) = \overline{P^+ w_i (t,x)} $, it holds
\EQS{\label{no2}
  \begin{aligned}
    &\| w_1 w_2 \|_{L_{t,x}^2}\\
    & \le    \| P^+ w_1 P^+ w_2 \|_{L_{t,x}^2}
      +\| P^- w_1 P^- w_2 \|_{L_{t,x}^2}
      +\| P^+ w_1 P^- w_2 \|_{L_{t,x}^2}\\
    &\quad +\| P^- w_1 P^+ w_2 \|_{L_{t,x}^2}
    =    4\| P^+ w_1 P^+ w_2 \|_{L_{t,x}^2}
  \end{aligned}
 }
 since $\| P^+ w_1 P^- w_2 \|_{L_{t,x}^2}=\| P^+ w_1 \overline{P^+ w_2} \|_{L_{t,x}^2}=\| P^+ w_1 P^+ w_2 \|_{L_{t,x}^2}$ and other terms involved with $P^- w_i$ was treated similarly.
 Thus,  we are reduced to working with functions with non-negative spacial frequencies $ P^+ w_1$ and  $P^+ w_2$. By using the Plancherel theorem and the Cauchy-Schwarz inequality, we get
  \EQS{\label{no3}
      \| P^+ w_1 P^+ w_2 \|_{L_{t,x}^2}^2
      &\lesssim \sup_{(\ta,\xi)\in\R\times\N}
     A_{L_1,L_2,N_1,N_2}(\ta,\xi)\|v_1\|_{L_\ta^2 l_\xi^2}^2
     \|v_2\|_{L_\ta^2 l_\xi^2}^2,
     }
    where
  \EQS{ \label{no4}
    \begin{aligned}
      &A_{L_1,L_2,N_1,N_2}(\ta,\xi)\\
      &\lesssim mes\{(\ta_1,\xi_1)\in\R\times\N; \xi-\xi_1\ge 0, \xi_1\sim N_1, \, \xi-\xi_1\sim N_2,\,\\
      &\quad\quad\LR{\ta_1-p_{\al+1}(\xi_1)}\lesssim L_1
       \quad\textrm{and}\quad
       \LR{\ta-\ta_1-p_{\al+1}(\xi-\xi_1)}\lesssim L_2\}\\
      &\lesssim (L_1\wedge L_2)\# B(\ta,\xi)
    \end{aligned}
  }
  with
  \EQQS{
    B(\ta,\xi)
    &:=\{\xi_1\ge 0;\xi-\xi_1\ge 0 ,  \xi_1\sim N_1, \,
     \xi-\xi_1\sim N_2 \; \\
    &\quad\quad\textrm{and}\
      \LR{\ta-p_{\al+1}(\xi_1)-p_{\al+1}(\xi-\xi_1)}\lesssim L_1\vee L_2\}.
  }
  We put  $g(\xi_1):=\ta-p_{\al+1}(\xi_1)-p_{\al+1}(\xi-\xi_1)$.
  When $N_1\lesssim\xi_0$ or $N_2\lesssim \xi_0$, it holds  $\# B(\ta,\xi)\lesssim \xi_0$.
   Now when $ N_1\wedge N_2 \gg \xi_0 $ then  $\xi_1, \xi-\xi_1 > \xi_0$ and by Hypothesis \ref{hyp1}  on $p_{\al+1}$, we see that $p_{\al+1}''(\xi_1),p_{\al+1}''(\xi-\xi_1)>0$
   which leads to
  \EQQS{
    |g''(\xi_1)|=|p_{\al+1}''(\xi_1) + p_{\al+1}''(\xi-\xi_1)|\gtrsim
    p_{\al+1}''(\xi_1) \vee p_{\al+1}''(\xi-\xi_1)\gtrsim  (N_1\vee N_2)^{\al-1}\; .
  }
      Lemma \ref{lem_counting2} then yields
  \EQQS{
    \# B(\ta,\xi)
    &\lesssim \#\{\xi_1\in\N; \xi_1\le \xi\ \textrm{and}\ |g(\xi_1)|\lesssim L_1\vee L_2 \}
   \lesssim \frac{(L_1\vee L_2)^{\frac 12}}{ (N_1\vee N_2)^{\frac{\al-1}{2}}} +1.
  }
This concludes the proof of \eqref{bil1} by combining the above inequality with \eqref{no1}--\eqref{no4}.

  The strategy to prove \eqref{bil2}  is similar.
  It suffices to show that
  \EQS{\label{eq_3.6}
    \#B(\ta,\xi) \lesssim \max\Bigl( \frac{L_1\vee L_2}{(N_1\vee N_2)^\al} +1, N_1\wedge N_2 \Bigr),
  }
  where
   \EQQS{
    B(\ta,\xi)
    &:=\{\xi_1\in\N;\xi-\xi_1\ge 0,\xi_1\sim N_1,\xi-\xi_1\sim N_2\ \textrm{and}\ |g(\xi_1)|\lesssim L_1\vee L_2\}.
  }
  with
   $g(\xi_1):=\ta-p_{\al+1}(\xi_1)-p_{\al+1}(\xi-\xi_1)$.

   We first notice that the estimate $ \#B(\ta,\xi)\lesssim N_1\wedge N_2 $ is direct in view of the definition of $B(\ta,\xi)$.
    Now, when $N_1\lesssim\xi_0$ or $N_2\lesssim \xi_0$, it holds $\# B(\ta,\xi)\lesssim \xi_0$ and when
  $N_1,N_2\gg\xi_0$,
  since $N_1\vee N_2  \gg N_1 \wedge N_2$, we notice that we have either $\xi_1\gg \xi-\xi_1\gg \xi_0$ or $
  \xi-\xi_1 \gg \xi_1\gg \xi_0 $.
  We also have $p_{\al+1}''(\theta)>0$ and $p_{\al+1}''(\theta)\sim \theta^{\al-1}$ for $\theta\in[\xi-\xi_1,\xi_1]$ by Hypothesis \ref{hyp1}.
  This leads to
  \EQQS{
    |g'(\xi_1)|
    =\bigg|\int_{\xi-\xi_1}^{\xi_1}
     p_{\al+1}''(\theta)d\theta\bigg|
    \sim|\xi_1^\al-(\xi-\xi_1)^\al|\gtrsim \xi_1^\al\vee (\xi-\xi_1)^\al \gtrsim (N_1\vee N_2)^\al\; .
  }
   Therefore, Lemma \ref{lem_counting} shows \eqref{eq_3.6}, which concludes the proof.
\end{proof}

Now  we have to adapt the bilinear estimates \eqref{bil1}--\eqref{bil2} to short time intervals. To do this we use the framework of the short-time $X^{s,b}$ spaces introduced by Ionescu-Kenig-Tataru \cite{IKT} (see also \cite{KTa}).
The following lemma contains the two essential estimates we need for our purpose.
We give the proof of these estimates in Appendix for the sake of completeness.

\begin{lem}\label{lem_short1}
  There exists $C>0$ such that for any $u\in X^{0,\frac 12,1}$ and $ L\ge 1 $,
  \EQS{\label{short1}
    \|\chi(L t)u\|_{L_{t,x}^2}
   \le C L^{-\frac 12}\|u\|_{X^{0,\frac 12,1}}
  }
  and
    \EQS{\label{short2}
    \|\chi(L t)u\|_{X^{0,\frac 12,1}}
    \le C\|u\|_{X^{0,\frac 12,1}},
  }
  where $ \chi$ is the smooth bump function defined in \eqref{defchi}.
\end{lem}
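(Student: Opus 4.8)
The plan is to deduce both estimates from a single Bernstein-type inequality in time obtained by conjugating with the free evolution. For every dyadic $M$ I claim
\[
  \|Q_M u\|_{L_t^\I L_x^2}\lesssim (1\vee M)^{\frac12}\|Q_M u\|_{L_{t,x}^2}.
\]
Indeed, since the space-time Fourier transform of $U_\al(-t)Q_M u$ at $(\ta,\x)$ equals that of $Q_M u$ at $(\ta+p_{\al+1}(\x),\x)$, the shift turns the modulation localization $\LR{\ta-p_{\al+1}(\x)}\sim M$ into a localization $|\ta|\lesssim 1\vee M$ of the time frequency. Applying the one-dimensional Bernstein inequality to the $L_x^2$-valued function $t\mapsto U_\al(-t)Q_M u$ produces the factor $(1\vee M)^{1/2}$, and this is unchanged after removing the conjugation because $U_\al(t)$ is unitary on $L_x^2$. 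Throughout, the lowest modulation block $M=0$ is measured at scale $1$, so all modulation weights below are read as $1\vee M$.

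To prove \eqref{short1}, I decompose $u=\sum_M Q_M u$ and estimate each block by H\"older in time, using $\|\chi(Lt)\|_{L_t^2}=L^{-1/2}\|\chi\|_{L^2}$ together with the Bernstein inequality above:
\[
  \|\chi(Lt)Q_M u\|_{L_{t,x}^2}\le \|\chi(Lt)\|_{L_t^2}\,\|Q_M u\|_{L_t^\I L_x^2}\lesssim L^{-\frac12}(1\vee M)^{\frac12}\|Q_M u\|_{L_{t,x}^2}.
\]
Summing over $M$ by the triangle inequality gives $\|\chi(Lt)u\|_{L_{t,x}^2}\lesssim L^{-1/2}\sum_M(1\vee M)^{1/2}\|Q_M u\|_{L_{t,x}^2}=L^{-1/2}\|u\|_{X^{0,\frac12,1}}$, which is \eqref{short1}; the constant is manifestly independent of $L\ge1$ and of the regularity.

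For \eqref{short2} I first observe that multiplication by $\chi(Lt)$ acts, on the time-frequency side, as convolution by $L^{-1}\ha\chi(\cdot/L)$, and since $p_{\al+1}(\x)$ is untouched this is a convolution at scale $L$ in the modulation variable $\si:=\ta-p_{\al+1}(\x)$, uniformly in $\x$. It therefore suffices to show, for each fixed $M$, the per-block bound $\sum_{M'}(1\vee M')^{1/2}\|Q_{M'}(\chi(Lt)Q_M u)\|_{L_{t,x}^2}\lesssim(1\vee M)^{1/2}\|Q_M u\|_{L_{t,x}^2}$, and then to sum over $M$. I split the $M'$–sum at $M'\lesssim M\vee L$ and $M'\gg M\vee L$. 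For the bulk $M'\lesssim M\vee L$, Cauchy--Schwarz in $M'$ reduces the sum to $(M\vee L)^{1/2}\|\chi(Lt)Q_M u\|_{L_{t,x}^2}$; combining \eqref{short1} applied to $Q_M u$ with the trivial bound yields $\|\chi(Lt)Q_M u\|_{L_{t,x}^2}\lesssim\min(1,(M/L)^{1/2})\|Q_M u\|_{L_{t,x}^2}$, and $(M\vee L)^{1/2}\min(1,(M/L)^{1/2})=(1\vee M)^{1/2}$ in both regimes $M\le L$ and $M>L$. For the tail $M'\gg M\vee L$, the modulation gap is $\gtrsim M'\gg L$, so the rapid decay of $\ha\chi$ gives, for every $N$, the bound $\|Q_{M'}(\chi(Lt)Q_M u)\|_{L_{t,x}^2}\lesssim(M')^{1/2}(1\vee M)^{1/2}L^{-1}(M'/L)^{-N}\|Q_M u\|_{L_{t,x}^2}$ after a pointwise estimate of the kernel and a Cauchy--Schwarz over the (width $\sim 1\vee M$) modulation support of $Q_M u$; summing the resulting convergent series in $M'$ again gives $\lesssim(1\vee M)^{1/2}\|Q_M u\|_{L_{t,x}^2}$. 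This proves the per-block bound and hence \eqref{short2}.

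The hard part is the low-modulation regime $M<L$ of \eqref{short2}: multiplying by $\chi(Lt)$ spreads a block of modulation $M$ over all modulations up to $L$, and the crude bound $\|Q_{M'}(\chi(Lt)Q_M u)\|\le\|Q_M u\|$ loses a factor $(L/M)^{1/2}$ once the weight $(M')^{1/2}$ is allowed to climb to $L^{1/2}$. The point is that precisely this loss is recovered by the Bernstein gain $\|Q_M u\|_{L_t^\I L_x^2}\lesssim(1\vee M)^{1/2}\|Q_M u\|_{L_{t,x}^2}$ encoded in \eqref{short1}; this is why the two estimates are naturally proved together and share the same conjugation-plus-Bernstein mechanism. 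The only remaining care is the uniformity of all constants in $L\ge1$, in $M$ and in the regularity, which is transparent from the scaling $\|\chi(L\cdot)\|_{L_t^2}=L^{-1/2}\|\chi\|_{L^2}$ and the scale invariance of $\|L^{-1}\ha\chi(\cdot/L)\|_{L^1}$.
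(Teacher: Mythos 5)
Your proof is correct, and for \eqref{short1} it is essentially the paper's argument: the per-block Bernstein inequality $\|Q_M u\|_{L_t^\I L_x^2}\lesssim (1\vee M)^{1/2}\|Q_M u\|_{L_{t,x}^2}$, obtained by conjugating with the free group, is exactly the block-wise form of the Besov embedding $B_{2,1}^{1/2}(\R)\hookrightarrow L^\infty(\R)$ that the paper invokes (cf.\ Remark \ref{rem_short}), and the H\"older step is identical. For \eqref{short2}, however, your organization is genuinely different. The paper keeps the input function whole, disposes of output modulations $\le L$ via \eqref{short1}, and for each output modulation $L_1>L$ applies a mean-value-theorem identity to the cutoff, $|\psi_{L_1}(\ta,\xi)|\lesssim L_1^{-1}|\check{\psi}_{L_1}(\ta,\xi)|\,|\ta-\ta'|+|\psi_{L_1}(\ta',\xi)|$, which transfers the modulation weight onto the input; the two resulting contributions are then handled by Young's inequality and by explicit kernel estimates (the quantities $I_{L_2}$, with the cases $L_2\ll L_1$ and $L_2\gtrsim L_1$). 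You instead decompose both input ($M$) and output ($M'$) modulations, prove the single per-block bound $\sum_{M'}(1\vee M')^{1/2}\|Q_{M'}(\chi(Lt)Q_M u)\|_{L_{t,x}^2}\lesssim (1\vee M)^{1/2}\|Q_M u\|_{L_{t,x}^2}$ via a bulk/tail split at $M'\sim M\vee L$, and sum in $M$ by the triangle inequality. The underlying mechanisms coincide --- orthogonality/Cauchy--Schwarz/Young when the output modulation is at most comparable to $M\vee L$, rapid decay of $\widehat{\chi}$ across a large modulation gap otherwise --- but your version is more modular: the bulk is handled by Cauchy--Schwarz plus \eqref{short1} itself, so the Bernstein gain $(1\vee M)^{1/2}$ exactly compensates the spreading loss $(M\vee L)^{1/2}$, and no manipulation of the bump functions $\psi_{L_1}$ is required. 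What the paper's route buys is a single dyadic sum over output modulations and computations that transfer verbatim from the short-time $X^{s,b}$ framework of \cite{IKT} it cites; what yours buys is transparency and the reuse of \eqref{short1} as a black box. Both arguments give constants uniform in $L\ge 1$, so the lemma is fully established either way.
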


\begin{rem}\label{rem_short}
  Note  that \eqref{short1} is  a direct consequence of the following inequality applied with $ g= U_\al(-t) u $ : for $g\in B_{2,1}^{\frac 12}(\R)$ and $L\ge 1$
  \EQS{\label{eq_short3}
    \|\chi(L\cdot )g\|_{L^2(\R)}
    \le \|\chi(L\cdot)\|_{L^2(\R)}\|g\|_{L^\I(\R)}
    \lesssim L^{-\frac 12}\|g\|_{B_{2,1}^{\frac 12}(\R)},
  }
  where $B_{2,1}^{\frac 12}(\R)$ is the Besov space.
  \eqref{eq_short3} is reminiscent of the Heisenberg uncertainty principle.
\end{rem}

With \eqref{short1}--\eqref{short2} at hands, we can prove the following versions of the bilinear estimate \eqref{bil1}--\eqref{bil2} on short-time intervals :

\begin{prop}\label{prop_short4}
 Let $\al\in[1,2]$, $u_1,u_2\in X^{0,\frac 12,1}$, $N_1 \,, N_2 \ge 1 $, $0<T<1$ and $a\in L^\I(\R^2)$ be such that $\|a\|_{L^\I}\lesssim 1$.
 Then for $I\subset \R$ satisfying  $|I|\sim (N_1\vee N_2)^{-1} T$ it holds
  \EQS{\label{eq_3.4}
    \begin{split}
      &\|\La_a(P_{N_1} u_1, P_{N_2} u_2)\|_{L^2(I;L^2)}\\
      &\lesssim T^{\frac 14} (N_1\vee N_2)^{-\frac \al4}\|P_{N_1} u_1\|_{X^{0,\frac 12,1}}\|P_{N_2} u_2\|_{X^{0,\frac 12,1}}.
    \end{split}
  }
 Moreover, in the case $ N_1\vee N_2 \gg N_1\wedge N_2  $,  \eqref{eq_3.4}  can be refined to
  \EQS{\label{eq_3.5}
    \begin{split}
      &\|\La_a(P_{N_1}u_1,P_{N_2}u_2)\|_{L^2(I;L^2)}\\
      &\lesssim   T^{\frac \theta2} (N_1\wedge N_2)^{\frac \theta2} \, (N_1\vee N_2)^{-\frac 12}\|P_{N_1}u_1\|_{X^{0,\frac 12,1}}
        \|P_{N_2}u_2\|_{X^{0,\frac 12,1}},
    \end{split}
  }
for any $ 0\le \theta\le 1 $.
\end{prop}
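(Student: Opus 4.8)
The plan is to reduce both \eqref{eq_3.4} and \eqref{eq_3.5} to the fixed-time bilinear bounds \eqref{bil1}--\eqref{bil2} by using the short-time estimates of Lemma \ref{lem_short1}. Write $N:=N_1\vee N_2$, $n:=N_1\wedge N_2$ and set $L_0:=NT^{-1}$, so that $L_0\ge N\ge 1$ (as $0<T<1$) and $|I|\sim L_0^{-1}$. First I would fix a smooth cutoff $\gamma(t)=\chi(cL_0(t-t_0))$ equal to $1$ on $I$, together with a second cutoff $\tilde\gamma(t)=\chi(c'L_0(t-t_0))$ satisfying $\tilde\gamma\equiv1$ on $\supp\gamma$. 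Since $\La_a$ acts only on the spatial frequencies, it is bilinear in the time variable, so $\gamma\La_a(P_{N_1}u_1,P_{N_2}u_2)=\La_a(\gamma P_{N_1}u_1,\tilde\gamma P_{N_2}u_2)$ and hence $\|\La_a(P_{N_1}u_1,P_{N_2}u_2)\|_{L^2(I;L^2)}\le\|\La_a(\gamma P_{N_1}u_1,\tilde\gamma P_{N_2}u_2)\|_{L_{t,x}^2}$. The reason for carrying a cutoff at scale $L_0$ on \emph{each} factor is precisely that it lets me invoke \eqref{short1} on both of them.

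Next I would decompose each factor in modulation, $P_{N_j}u_j=\sum_{L_j}Q_{L_j}P_{N_j}u_j$, and estimate the pieces $\|\La_a(\gamma Q_{L_1}P_{N_1}u_1,\tilde\gamma Q_{L_2}P_{N_2}u_2)\|_{L^2_{t,x}}$. Multiplying by a function with time-Fourier support in $|\cdot|\lesssim L_0$ spreads modulation by $O(L_0)$, so $\gamma Q_{L_j}P_{N_j}u_j$ is supported at frequency $\sim N_j$ and modulation $\lesssim \tilde L_j:=\max(L_j,L_0)$; I may therefore apply \eqref{bil1} (resp. \eqref{bil2}) with effective modulations $\tilde L_1,\tilde L_2$. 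The two cutoff factors are then handled according to the size of $L_j$: for a \emph{low}-modulation factor $L_j\le L_0$ I use \eqref{short1}, giving $\|\gamma Q_{L_j}P_{N_j}u_j\|_{L^2}\lesssim L_0^{-\frac12}\|Q_{L_j}P_{N_j}u_j\|_{X^{0,\frac12,1}}=L_0^{-\frac12}L_j^{\frac12}\|Q_{L_j}P_{N_j}u_j\|_{L^2}$, the crucial point being that the weight $L_j^{\frac12}$ produced by the $X^{0,\frac12,1}$-norm is exactly the $\ell^1$ summation weight; for a \emph{high}-modulation factor $L_j> L_0$ I keep instead the trivial bound $\|\gamma Q_{L_j}P_{N_j}u_j\|_{L^2}\le\|Q_{L_j}P_{N_j}u_j\|_{L^2}$, since there \eqref{short1} is lossy. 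Writing $b^{(j)}_{L_j}:=L_j^{\frac12}\|Q_{L_j}P_{N_j}u_j\|_{L^2}$, so that $\sum_{L_j}b^{(j)}_{L_j}=\|P_{N_j}u_j\|_{X^{0,\frac12,1}}$, one checks in each of the three cases (low--low, mixed, high--high) that the $L$-dependent prefactor coming from \eqref{bil1} combined with the factors just described is $\lesssim L_0^{-\frac14}N^{-\frac{\al-1}{4}}+L_0^{-\frac12}$, \emph{uniformly} in $(L_1,L_2)$; in the low--low case this is the identity $L_0^{\frac12}\bigl(L_0^{\frac14}N^{-\frac{\al-1}{4}}+1\bigr)L_0^{-1}$, and in the other cases one uses $L_j^{-\frac14}\le L_0^{-\frac14}$, $L_j^{-\frac12}\le L_0^{-\frac12}$ for $L_j>L_0$.

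Summing $\sum_{L_1}b^{(1)}_{L_1}\sum_{L_2}b^{(2)}_{L_2}$ then yields the bound $\bigl(L_0^{-\frac14}N^{-\frac{\al-1}{4}}+L_0^{-\frac12}\bigr)\|P_{N_1}u_1\|_{X^{0,\frac12,1}}\|P_{N_2}u_2\|_{X^{0,\frac12,1}}$, and since $L_0=NT^{-1}$ and $\al\le2$ the first term equals $T^{\frac14}N^{-\frac\al4}$ and dominates, which is \eqref{eq_3.4}. For \eqref{eq_3.5} I would run the identical scheme with \eqref{bil2} in place of \eqref{bil1}. Keeping the $n^{\frac12}$ branch of the minimum in \eqref{bil2} and summing gives $T^{\frac12}n^{\frac12}N^{-\frac12}\|P_{N_1}u_1\|_{X^{0,\frac12,1}}\|P_{N_2}u_2\|_{X^{0,\frac12,1}}$ (the case $\theta=1$), while keeping the branch $(L_1\vee L_2)^{\frac12}N^{-\frac\al2}+1$ gives $N^{-\frac12}\|P_{N_1}u_1\|_{X^{0,\frac12,1}}\|P_{N_2}u_2\|_{X^{0,\frac12,1}}$ (the case $\theta=0$), using $\al\ge1$ and $L_0\ge N$. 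Since both bound the same left-hand side, the general case $0\le\theta\le1$ follows by taking the geometric mean of the two, which produces exactly $T^{\frac\theta2}n^{\frac\theta2}N^{-\frac12}$.

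The delicate point is not the fixed-time input \eqref{bil1}--\eqref{bil2} but the bookkeeping of the modulation cutoffs, and I expect it to be the main obstacle. One must apply \eqref{short1} only on the low-modulation factors: on a factor with $L_j>L_0$ the resulting weight $L_0^{-\frac12}L_j^{\frac12}$ would destroy the $\ell^1$-summability, since $\sum_{L_j}L_j^{\frac14}b^{(j)}_{L_j}$ diverges. The whole argument hinges on the $\ell^1$ structure of $X^{0,\frac12,1}$: it is exactly the weight $L_j^{\frac12}$ that \eqref{short1} cancels and that is then reconstituted as the summation weight, so that no logarithmic loss occurs. Two minor points: the functions $\gamma Q_{L_j}P_{N_j}u_j$ are complex whereas \eqref{bil1}--\eqref{bil2} are stated for real-valued inputs, but this is harmless because the proof of Proposition \ref{prop_stri1_2} only uses $|\widetilde{u_j}|$, so the estimates apply verbatim (alternatively one splits into real and imaginary parts); and $\tilde\gamma$ must, like $\gamma$, be a rescaled copy of $\chi$ at scale $L_0$ so that \eqref{short1} and \eqref{short2} apply to both factors, the shift by $t_0$ being absorbed by the translation invariance of the $X^{0,\frac12,1}$-norm.
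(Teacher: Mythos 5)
Your overall architecture --- localize time at scale $(N_1\vee N_2)^{-1}T$, split into modulations relative to $L_0=(N_1\vee N_2)T^{-1}$, feed the pieces into \eqref{bil1}--\eqref{bil2}, and gain $L_0^{-1/2}$ per low-modulation factor via \eqref{short1} --- is the same as the paper's, and your case bookkeeping (low--low, mixed, high--high) as well as your endpoint-interpolation derivation of \eqref{eq_3.5} from the cases $\theta=0$ and $\theta=1$ are numerically consistent with it. But there is a genuine gap at the central step. You decompose the factors \emph{first} and multiply by the cutoffs \emph{afterwards}, and then claim that $\gamma Q_{L_j}P_{N_j}u_j$ has modulation $\lesssim\tilde L_j=\max(L_j,L_0)$ because $\gamma$ has ``time-Fourier support in $|\cdot|\lesssim L_0$''. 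This premise is false: your $\gamma(t)=\chi(cL_0(t-t_0))$ is a rescaled translate of the compactly supported $\chi$, so by Paley--Wiener $\F_t\gamma$ extends to an entire function (times a modulation) and cannot have compact support. Multiplication by $\gamma$ therefore smears the modulation of $Q_{L_j}P_{N_j}u_j$ over all scales, the products $\gamma Q_{L_1}P_{N_1}u_1$ and $\tilde\gamma Q_{L_2}P_{N_2}u_2$ are \emph{not} of the form $Q_{\le\tilde L_j}(\cdots)$, and Proposition \ref{prop_stri1_2} cannot be applied to them with ``effective modulations $\tilde L_1,\tilde L_2$'': the estimates \eqref{bil1}--\eqref{bil2} are proved by a counting argument on the space-time Fourier support and genuinely require that support information. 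Note moreover that the two properties you impose on your cutoffs are mutually exclusive: $\tilde\gamma\equiv1$ on $\supp\gamma$ (with $\tilde\gamma\not\equiv1$) forces $\gamma$ to have compact support in time, while exact modulation localization would force $\F_t\gamma$ to have compact support; no nonzero function has both, and a function whose Fourier transform is compactly supported and which equals $1$ on an interval is identically $1$.

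The tails produced by the cutoff do decay rapidly, but controlling them is precisely the nontrivial technical point of the short-time framework, and it is the content of \eqref{short2} --- which, despite your closing sentence, your scheme never actually uses (your high-modulation factors only receive the trivial bound); its role has been silently replaced by the false support claim. The paper avoids the problem by reversing the order of operations: multiply by a single cutoff $\chi(Kt)$, $K=(N_1\vee N_2)T^{-1}$ (legitimate since $u_j=\chi(Kt)u_j$ on $I$, after translating $I$ to $[0,K^{-1}]$), and \emph{then} decompose the products, $\chi(Kt)P_{N_j}u_j=Q_{\le K}(\chi(Kt)P_{N_j}u_j)+\sum_{L>K}Q_{L}(\chi(Kt)P_{N_j}u_j)$, so that every piece is exactly modulation-localized by construction. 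Then \eqref{bil1} (resp.\ \eqref{bil22}) applies to each pair of pieces, \eqref{short1} gives the $K^{-1/2}$ gain on the $Q_{\le K}$ pieces, and \eqref{short2} is exactly what permits the resummation of the high-modulation pieces, via $\sum_{L>K}L^{\frac12}\|Q_L(\chi(Kt)P_{N_j}u_j)\|_{L_{t,x}^2}\le\|\chi(Kt)P_{N_j}u_j\|_{X^{0,\frac12,1}}\lesssim\|P_{N_j}u_j\|_{X^{0,\frac12,1}}$. If you reorganize your argument this way, your numerology goes through unchanged, so the defect is structural rather than a wrong estimate --- but as written the key step is invalid. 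A smaller point: your first justification for complex-valued inputs is also incorrect, since the proof of Proposition \ref{prop_stri1_2} uses not merely $|\tilde u_j|$ but its evenness (i.e.\ real-valuedness of $u_j$) to reduce to positive frequencies via the $P^{\pm}$ trick; for genuinely complex inputs the mixed-sign resonance $|p_{\al+1}''(\xi_1)+p_{\al+1}''(\xi-\xi_1)|$ can degenerate. Your fallback of splitting into real and imaginary parts does repair this.
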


\begin{proof}
By possibly replacing $ a(\cdot,\cdot) $ by $ \tilde{a} (\cdot,\cdot) $ with $ \tilde{a}(\xi_1,\xi_2)=a(\xi_2,\xi_1) $, we see that the desired estimates are symmetric in $ N_1 $ and $ N_2 $.
We may thus assume that $ N_1\ge N_2 $ and also that $I=[0,K^{-1} ]$ where we will take $K=N_1  T^{-1} $.
  Recall that $\chi|_{[-1,1]}=1$ and $\supp\chi\subset [-2,2]$ so that  we have $u_j=\chi(K t )u_j$ for $j=1,2$ on $I:=[0,K^{-1} ]$.
  We decompose $\chi(Kt)P_{N_1} u_1$ and $\chi(Kt)P_{N_2} u_2$ as
  \EQQS{
    \chi(Kt)P_{N_j} u_j
    =Q_{\le K }(\chi(Kt)P_{N_j} u_j) + \sum_{L>K }Q_{L}(\chi(Kt)P_{N_j} u_j)
  }
  for $j=1,2$.
  Then, the triangle inequality shows
  \EQQS{
    &\|\Lambda_a(P_{N_1} u_1,P_{N_2} u_2)\|_{L^2(I;L^2)}\\
    &\le \|\Lambda_a(Q_{\le K}(\chi(K t) P_{N_1}u_1),
      Q_{\le K}(\chi(K t)P_{N_2} u_2))\|_{L^2(I;L^2)}\\
    &\quad+\sum_{L_2>K}\|\Lambda_a(Q_{\le K}(\chi(K t)u_1),
      Q_{L_2}(\chi(K t) P_{N_2}u_2))\|_{L^2(I;L^2)}\\
    &\quad+\sum_{L_1>K}\|\Lambda_a(Q_{L_1}(\chi(K t)P_{N_1} u_1),
      Q_{\le K}(\chi(K t)P_{N_2} u_2))\|_{L^2(I;L^2)}\\
    &\quad+\sum_{L_1,L_2>K}\|\Lambda_a(Q_{L_1}(\chi(K t)P_{N_1}u_1),
      Q_{L_2}(\chi(K t)P_{N_2} u_2))\|_{L^2(I;L^2)}\\
    &=:A_1+A_2+A_3+A_4.
  }
  First we prove \eqref{eq_3.4}.
  For $A_1$,  Proposition \ref{prop_stri1_2} and Lemma  \ref{lem_short1} lead to
  \EQQS{
    A_1
    &\lesssim K^{\frac 12} \bigg(\frac{K^{\frac 14}}{N_1^{\frac{\al-1}{4}}}+1\bigg) \|Q_{\le K}(\chi(K t)P_{N_1}u_1)\|_{L_{t,x}^2}
      \|Q_{\le K}(\chi(K t)P_{N_2}u_2)\|_{L_{t,x}^2}
    \\
    &\lesssim \bigg(\frac{K^{-\frac 14}}{N_1^{\frac{\al-1}{4}}}+K^{-\frac 12}\bigg) \|P_{N_1} u_1\|_{X^{0,\frac 12,1}}\|P_{N_2}u_2\|_{X^{0,\frac 12,1}}\\
     & \lesssim  T^{\frac 14}\, N_1^{-\frac \al4}\|P_{N_1} u_1\|_{X^{0,\frac 12,1}}\|P_{N_2}u_2\|_{X^{0,\frac 12,1}}
  }
  since $\al\le 2$ and $ K= N_1 T^{-1} $.
  For $A_2$, we get, again with  Proposition \ref{prop_stri1_2} and Lemma  \ref{lem_short1} at hands, that
  \EQQS{
    A_2
    &\lesssim \sum_{L_2>K}
      K^{\frac 12}
      \bigg(\frac{L_2^{\frac 14}}{N_1^{\frac{\al-1}{4}}}+1\bigg)
      \|Q_{\le K}(\chi(K t)P_{N_1}u_1)\|_{L_{t,x}^2}
      \|Q_{L_2}(\chi(K t)P_{N_2}u_2)\|_{L_{t,x}^2}\\
    &\lesssim
      \bigg(\frac{K^{-\frac 14}}{N_1^{\frac{\al-1}{4}}}+ K^{-\frac 12} \bigg)  \|P_{N_1}u_1\|_{X^{0,\frac 12,1}}
      \sum_{L_2>K}  L_2^{\frac 12}
      \|Q_{L_2}(\chi(K t)P_{N_2}u_2)\|_{L_{t,x}^2}\\
&   \lesssim T^{\frac 14} \,  N_1^{-\frac \al4}\|P_{N_1}u_1\|_{X^{0,\frac 12,1}}\|P_{N_2}u_2\|_{X^{0,\frac 12,1}}.
  }
  Similarly, $A_3$ can be estimated by the same bound as above.
  Finally, we evaluate the contribution of $A_4$. Proposition \ref{prop_stri1_2} together with
 Lemma  \ref{lem_short1} lead to
  \EQQS{
    A_4
    &\lesssim \sum_{L_1, L_2>K}(L_1\wedge L_2)^{\frac 12}
      \frac{(L_1\vee L_2)^{\frac 14}}{N_1^{\frac{\al-1}{4}}}
      \prod_{j=1}^2\|Q_{L_j}(\chi(K t)P_{N_j}u_j)\|_{L_{t,x}^2}\\
    &\lesssim T^{\frac 14} \, N_1^{-\frac \al4}\|\chi( K  t)P_{N_1}u_1\|_{X^{0,\frac 12,1}}
      \|\chi(K t)P_{N_2}u_2\|_{X^{0,\frac 12,1}}\\
    &\lesssim T^{\frac 14} \,N_1^{-\frac \al4}\|P_{N_1}u_1\|_{X^{0,\frac 12,1}}\|P_{N_2}u_2\|_{X^{0,\frac 12,1}},
  }
  which completes the proof of  \eqref{eq_3.4}.

   Finally the proof of \eqref{eq_3.5} follows exactly  the same line by using the following version of \eqref{bil2}:
   \EQS{\label{bil22}
    \begin{aligned}
      &\|\La_a(Q_{\le L_1} P_{N_1} u_1,
        Q_{\le L_2}P_{N_2}u_2)\|_{L_{t,x}^2}\\
      &\le C(L_1\wedge L_2)^{\frac 12}
      \Biggr(\frac{(L_1\vee L_2)^{\frac{(1-\theta)}{2}}}{(N_1\vee N_2)^{\frac{(1-\theta)\al}{2}}}+1\Biggl)(N_1\wedge N_2)^{\frac \theta2}\\
      &\quad\times\|Q_{\le L_1}P_{N_1}u_1\|_{L_{t,x}^2}
        \|Q_{\le L_2}P_{N_2}u_2\|_{L_{t,x}^2}.
    \end{aligned}
  }
  for any $ 0\le \theta \le 1 $ whenever $ N_1\vee N_2 \gg \max(N_1 \wedge N_2 , 1) $.
   For instance, since $ N_1\ge N_2$, \eqref{bil22} leads to
  \EQQS{
    A_1
    &\lesssim  N_2^{\frac \theta2}  K^{\frac 12} \Biggr(\frac{K^{\frac{(1-\theta)}{2}}}{N_1^{\frac{(1-\theta)\al}{2}}}+1\Biggl)
     \|Q_{\le K}(\chi( K t) P_{N_1} u_1)\|_{L_{t,x}^2}
      \|Q_{\le K }(\chi(K t) P_{N_2} u_2)\|_{L_{t,x}^2}
    \\
    &\lesssim N_2^{\frac \theta2} \Biggr(\frac{(N_1^{-1}T) ^{\frac \theta2}}{N_1^{\frac{(1-\theta)\al}{2}}}+(N_1^{-1}T)^{\frac 12}\Biggl)\|P_{N_1} u_1\|_{X^{0,\frac 12,1}}\|P_{N_2} u_2\|_{X^{0,\frac 12,1}} \\
   & \lesssim T^{\frac \theta2} N_2^{\frac \theta2} (N_1^{-\frac{\al}{2}}+ N_1^{-\frac 12}) \|P_{N_1} u_1\|_{X^{0,\frac 12,1}}\|P_{N_2} u_2\|_{X^{0,\frac 12,1}}\\
    & \lesssim T^{\frac \theta2} N_2^{\frac \theta2}  N_1^{-\frac 12} \|P_{N_1} u_1\|_{X^{0,\frac 12,1}}\|P_{N_2} u_2\|_{X^{0,\frac 12,1}}
  }
   since $ 1\le \alpha\le 2 $.
   Similarly,
   \EQQS{
    &A_2\\
    &\lesssim \sum_{L_2>K}N_2^{\frac \theta2}
      K^{\frac 12}
      \Biggr(\frac{
      L_2^{\frac{(1-\theta)}{2}}}{N_1^{\frac{(1-\theta)\al}{2}}}+1\Biggl)
      \|Q_{\le K}(\chi(Kt)P_{N_1}u_1)\|_{L_{t,x}^2}
      \|Q_{L_2}(\chi(K t)P_{N_2}u_2)\|_{L_{t,x}^2}\\
    &\lesssim T^{\frac \theta2} N_2^{\frac \theta2}  (N_1^{-\frac{\al}{2}} +N_1^{-\frac 12})
     \|P_{N_1}u_1\|_{X^{0,\frac 12,1}}
     \sum_{L_2>K} L_2^{\frac 12} \|Q_{L_2}(\chi(K  t)P_{N_2}u_2)\|_{L_{t,x}^2}
    \\
&   \lesssim  T^{\frac \theta2} N_2^{\frac \theta2}  N_1^{-\frac 12} \|P_{N_1}u_1\|_{X^{0,\frac 12,1}}\|P_{N_2}u_2\|_{X^{0,\frac 12,1}}.
  }
and
 \EQQS{
    A_4
    &\lesssim \sum_{L_1, L_2>K}(L_1\wedge L_2)^{\frac 12} N_2^{\frac \theta2}
     \Biggr(\frac{(L_1\vee L_2)^{\frac{(1-\theta)}{2}}}{N_1^{\frac{(1-\theta)\al}{2}}}+1\Biggl)
      \prod_{j=1}^2\|Q_{L_j}(\chi(K t)P_{N_j}u_j)\|_{L_{t,x}^2}\\
    &\lesssim T^{\frac \theta2} N_2^{\frac \theta2}  \, N_1^{-\frac 12}\|\chi( K  t)P_{N_1}u_1\|_{X^{0,\frac 12,1}}
      \|\chi(K t)P_{N_2}u_2\|_{X^{0,\frac 12,1}}\\
    &\lesssim T^{\frac \theta2} N_2^{\frac \theta2}  \,N_1^{-\frac 12}\|P_{N_1}u_1\|_{X^{0,\frac 12,1}}\|P_{N_2}u_2\|_{X^{0,\frac 12,1}},
  }
  which completes the proof.
\end{proof}

Let us now translate \eqref{eq_3.4} and \eqref{eq_3.5} in terms of bilinear estimates for free solutions of \eqref{eq1}.

\begin{cor}\label{cor_free}
 Let $a\in L^\I(\R^2)$ be such that $\|a\|_{L^\I}\lesssim 1$, $ \alpha\in [1,2] $, $N_1,N_2\ge 1 $, $0<T<1$ and $\vp_1,\vp_2\in L^2(\T)$.
 Suppose that $I\subset \R$ satisfies $|I|\sim (N_1\vee N_2)^{-1}T$.
 Then it holds
  \EQS{\label{cor1}
    \begin{aligned}
      &\|\La_a(e^{-tL_{\al+1}}P_{N_1}\vp_1,
        e^{-tL_{\al+1}}P_{N_2}\vp_2 )\|_{L^2(I;L^2)}\\
      &\lesssim T^{\frac 14} (N_1\vee N_2)^{-\frac \al4}\|P_{N_1}\vp_1\|_{L^2}\|P_{N_2}\vp_2\|_{L^2}.
    \end{aligned}
  }
  Moreover, when $ N_1\vee N_2 \gg N_1\wedge N_2 $, \eqref{cor1} can be refined to
  \EQS{\label{cor2}
    \begin{aligned}
      &\|\La_a(e^{-tL_{\al+1}}P_{N_1}\vp_1,
        e^{-tL_{\al+1}}P_{N_2}\vp_2)
        \|_{L^2(I;L^2)}\\
      &\lesssim  T^{\frac \theta2}  (N_1\wedge N_2)^{\frac \theta2}(N_1\vee N_2)^{-\frac 12}
       \|P_{N_1}\vp_1\|_{L^2}\|P_{N_2}\vp_2\|_{L^2} \; ,
    \end{aligned}
  }
  for any $ 0 \le \theta\le 1 $.
\end{cor}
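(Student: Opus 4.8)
The plan is to obtain \eqref{cor1} and \eqref{cor2} directly from Proposition \ref{prop_short4} by feeding it free solutions, the only genuine work being to control the $X^{0,\frac12,1}$--norm of a time-truncated free solution by the $L^2$--norm of its initial datum. Set $K:=(N_1\vee N_2)T^{-1}$, so that the hypothesis $|I|\sim (N_1\vee N_2)^{-1}T$ becomes $|I|\sim K^{-1}$. Since the $X^{0,\frac12,1}$--norm depends only on $|\tilde{u}|$ on each region $\LR{\ta-p_{\al+1}(\xi)}\sim L$, it is invariant under translation in time; hence, after translating, I may assume $I\subset[-K^{-1},K^{-1}]$, on which $\chi(Kt)\equiv1$ by \eqref{defchi}.

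First I would introduce the genuinely $X^{0,\frac12,1}$ functions $u_j:=\chi(Kt)\,e^{-tL_{\al+1}}\vp_j$ for $j=1,2$. Because $\chi(Kt)=1$ on $I$, we have $P_{N_j}u_j=e^{-tL_{\al+1}}P_{N_j}\vp_j$ on $I$, and therefore
\EQQS{
\La_a(P_{N_1}u_1,P_{N_2}u_2)=\La_a(e^{-tL_{\al+1}}P_{N_1}\vp_1,e^{-tL_{\al+1}}P_{N_2}\vp_2)\qquad\text{on }I.
}
Consequently, once $u_1,u_2\in X^{0,\frac12,1}$ is known (which follows from the same computation below without the projectors), applying \eqref{eq_3.4} (resp.\ \eqref{eq_3.5}) to $u_1,u_2$ reduces the whole corollary to the single bound
\EQS{\label{keyfree}
\|P_{N_j}u_j\|_{X^{0,\frac12,1}}=\|\chi(Kt)\,U_\al(t)P_{N_j}\vp_j\|_{X^{0,\frac12,1}}\lesssim\|P_{N_j}\vp_j\|_{L^2},\qquad j=1,2.
}

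The heart of the matter is \eqref{keyfree}, and it is the step I expect to require the most care. To prove it I would conjugate by the group: for any $w$ one has $\|Q_Lw\|_{L^2_{t,x}}=\|\Delta_L(U_\al(-t)w)\|_{L^2_{t,x}}$, where $\Delta_L$ denotes the Littlewood--Paley projection in the time variable onto $|\ta|\sim L$, since $U_\al(-t)$ turns the modulation $\ta-p_{\al+1}(\xi)$ into $\ta$. For $w=\chi(Kt)U_\al(t)P_{N_j}\vp_j$ this gives $U_\al(-t)w=\chi(Kt)\,P_{N_j}\vp_j$, a tensor product of a scalar time bump and a fixed spatial function, so that
\EQQS{
\|P_{N_j}u_j\|_{X^{0,\frac12,1}}&=\sum_{L}L^{\frac12}\|\Delta_L(\chi(K\cdot))\|_{L^2_t}\,\|P_{N_j}\vp_j\|_{L^2_x}\\
&=\|\chi(K\cdot)\|_{B_{2,1}^{\frac12}(\R)}\,\|P_{N_j}\vp_j\|_{L^2_x}.
}
It then remains to observe that $\|\chi(K\cdot)\|_{B_{2,1}^{\frac12}(\R)}\lesssim1$ uniformly in $K\ge1$: from $\widehat{\chi(K\cdot)}(\ta)=K^{-1}\hat{\chi}(\ta/K)$ the frequencies $L\lesssim K$ contribute $L^{\frac12}\|\Delta_L(\chi(K\cdot))\|_{L^2_t}\sim K^{-1}L$, which sum to $\sim1$, while the frequencies $L\gg K$ are negligible thanks to the rapid decay of $\hat{\chi}$. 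This is precisely the $X^{0,\frac12,1}$--counterpart of the elementary bound recorded in Remark \ref{rem_short}.

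Finally I would assemble the estimates: inserting \eqref{keyfree} for $j=1,2$ into the right-hand side of \eqref{eq_3.4}, applied to $u_1,u_2$, and using the identity on $I$ displayed above yields \eqref{cor1}; inserting it into \eqref{eq_3.5} yields \eqref{cor2} for every $0\le\theta\le1$. No new difficulty arises here, since the powers $T^{\frac14}(N_1\vee N_2)^{-\frac\al4}$ and $T^{\frac\te2}(N_1\wedge N_2)^{\frac\te2}(N_1\vee N_2)^{-\frac12}$ are carried over verbatim from Proposition \ref{prop_short4}; the only substantive point is the free-solution bound \eqref{keyfree}, i.e.\ that truncating a free evolution to a time window of length $\sim K^{-1}$ is free of charge in $X^{0,\frac12,1}$.
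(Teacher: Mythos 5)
Your proposal is correct and follows essentially the same route as the paper: both reduce the corollary to Proposition \ref{prop_short4} applied to a time-truncated free solution, and both compute its $X^{0,\frac12,1}$--norm via the conjugation identity $Q_L u = U_\al(t)\bigl(R_L\, U_\al(-t)u\bigr)$, which turns the norm into a tensor product of a Besov-type bound on the cutoff and $\|P_{N_j}\vp_j\|_{L^2}$ (the paper's \eqref{eq_3.2}--\eqref{eq_3.3}). The only cosmetic difference is that you truncate at the matched scale $\chi(Kt)$ (needing the uniform-in-$K$ Besov bound, which your scaling argument correctly supplies), whereas the paper simply uses the unit-scale cutoff $\chi(t)$, which already equals $1$ on $I$.
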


\begin{proof}
  Recall that $\ha{\chi}\in\mathcal{S}$. In particular, it holds that
  \EQS{\label{eq_3.2}
    \sum_L L^{\frac 12}\|R_{L}\chi\|_{L_t^2}\lesssim 1,
  }
  where $R_{L}\chi:=\F_t^{-1}(\phi(\ta/L)\ha{\chi}(\ta))$.
  Indeed, we have
  \EQQS{
    \|\phi(\ta/L)\ha{\chi}\|_{L_\ta^2}^2
    =\int_\R \LR{\ta}^{-8}\LR{\ta}^8
      |\phi(\ta/L)\ha{\chi}(\ta)|^2 d\ta
    \lesssim \int_\R \LR{\ta}^{-8}|\phi(\ta/L)|^2 d\ta
    \lesssim L^{-7},
  }
  which shows \eqref{eq_3.2}.
  Observe that $Q_L u=e^{-tL_{\al+1}}(R_L e^{tL_{\al+1}}u)$.
  By this and \eqref{eq_3.2}, for $j=1,2$, we obtain
  \EQS{\label{eq_3.3}
    \begin{aligned}
      \|\chi(t)e^{-tL_{\al+1}}P_{N_j}\vp_j\|_{X^{0,\frac 12,1}}
      &=\sum_L L^{\frac 12}
        \|Q_L(\chi(t)e^{-tL_{\al+1}}P_{N_j}\vp_j)\|_{L_{t,x}^2}\\
      &=\sum_L L^{\frac 12}
        \|R_L\chi\|_{L_{t}^2}\|P_{N_j}\vp_j\|_{L_{x}^2}
      \lesssim \|P_{N_j}\vp_j\|_{L_{x}^2}.
    \end{aligned}
  }
  Note that $\chi(t)=1$ for $t\in I$ since $N_1,N_2\ge 1$.
  This together with \eqref{eq_3.3} and Proposition \ref{prop_short4} with $u_j=\chi(t)e^{-tL_{\al+1}}\vp_j$ for $j=1,2$  ensure that
  \EQQS{
    &\|\La_a(e^{-tL_{\al+1}}P_{N_1}\vp_1,
      e^{-tL_{\al+1}}P_{N_2}\vp_2)\|_{L^2(I;L^2)}\\
    &\lesssim T^{\frac 14} \, (N_1\vee N_2)^{-\frac \al4}
      \|\chi(t)e^{-tL_{\al+1}}P_{N_1}\vp_1\|_{X^{0,\frac 12,1}}
      \|\chi(t)e^{-tL_{\al+1}}P_{N_2}\vp_2\|_{X^{0,\frac 12,1}}\\
    &\lesssim T^{\frac 14} \,(N_1\vee N_2)^{-\frac \al4}\|P_{N_1}\vp_1\|_{L_{x}^2}\|P_{N_2}\vp_2\|_{L_{x}^2},
  }
   and \eqref{cor2} is obtained in the same way.
\end{proof}

We are now ready to prove Propositions \ref{prop_bistri} and \ref{prop_bistri2}.
\begin{proof}[Proof of Proposition \ref{prop_bistri}]
Again, by possibly replacing $ a(\cdot,\cdot) $ by $ \tilde{a} (\cdot,\cdot) $ with $ \tilde{a}(\xi_1,\xi_2)=a(\xi_2,\xi_1) $, we can assume that $N_1\ge  N_2 $.
    We chop the time interval $[0,T]$ into small pieces of length $\sim N_1^{-1} T $, i.e., we define $\{I_{j,N_1}\}_{j\in J_{N_1}}$
     with $  \# J_{N_1}\sim N_1 $ so that $\bigcup_{j\in J_{N_1}}I_{j,N_1}=[0,T]$, $|I_{j,N_1}|\sim N_1^{-1} T$.
  For $j\in J_{N_1}$, we choose $c_{j,N_1}\in I_{j,N_1}$ at which $\|P_{N_1}u_1(t)\|_{L_x^2}^2$ attains its minimum on $I_{j,N_1}$.
  For simplicity, we write $c_j=c_{j,N_1}$.
  Since $u_1$ and $u_2$ satisfy \eqref{eq1}, on $ I_{j,N_1} $ it holds
   \EQQS{
    P_{N_1}u_1(t)
    &=e^{-(t-c_j)L_{\al+1}}P_{N_1} u_1(c_j)
     +F_{1,j},\\
    P_{N_2}u_2(t)
    &=e^{-(t-c_j)L_{\al+1}}P_{N_2} u_2(c_j)
     +F_{2,j},
  }
  where
  \EQQS{
    F_{i,j}
    :=\int_{c_j}^t e^{-(t-\tau)L_{\al+1}} P_{N_i}
     \p_x f_i (\tau) d\tau
    =\int_{c_j}^t e^{-tL_{\al+1}}  g_{N_i}(\tau ) d\tau
  }
  with
  $ g_{N_i}(\tau ):=e^{\tau L_{\al+1}}P_{N_i}\p_x f_i (\tau ) $ for $i=1,2$.
  Therefore, we have
  \EQQS{
    \|\La_a (P_{N_1} u_1,
      P_{N_2} u_2)\|_{L^2([0,T];L^2)}^2
    \le \sum_{j\in J_{N_1}} \sum_{m=1}^4 A_{m,j}^2,
  }
  where
   \EQQS{
    A_{1,j}
    &:=\|\La_a(P_{N_1} e^{-(t-c_j)L_{\al+1}} u_1(c_j), P_{N_2} e^{-(t-c_j)L_{\al+1}}u_2(c_j))\|_{L^2(I_{j,N_1};L^2)},\\
    A_{2,j}
    &:=\|\La_a(P_{N_1} e^{-(t-c_j)L_{\al+1}} u_1(c_j), F_{2,j} )\|_{L^2(I_{j,N_1};L^2)},\\
    A_{3,j}
    &:=\|\La_a(F_{1,j}, P_{N_2} e^{-(t-c_j)L_{\al+1}} u_2(c_j))\|_{L^2(I_{j,N_1};L^2)},\\
    A_{4,j}
    &:=\|\La_a(F_{1,j},F_{2,j})\|_{L^2(I_{j,N_1};L^2)}.
  }
  For the contribution of $A_{1,j}, \; j\in J_{N_1}$, \eqref{cor1} and the definition of $c_j=c_{j,N_1}$ lead to
 \EQQS{
    \sum_{j\in J_{N_1}}  A_{1,j}^2
    &\lesssim \sum_{j\in J_{N_1}}  T^{\frac 12} \, N_1^{-\frac{\al}{2}}
      \|P_{N_1} u_1(c_j)\|_{L_x^2}^2
      \|P_{N_2} u_2(c_j)\|_{L_x^2}^2\\
    &\lesssim   T^{\frac 12} \, \|u_2\|_{L_T^\I L_x^2}^2
      \sum_{j\in J_N}
       N_1^{-\frac{\al}{2}}|I_{j,N_1}|^{-1}
      \int_{I_{j,N_1}}  \|P_{N_1} u_1(t)\|_{L_x^2}^2\,
       dt\\
    &\lesssim T^{-\frac 12}N_1^{1-\frac{\al}{2}}  \|P_{N_1} u_1\|_{L^2_{T,x}}^2\|P_{N_2} u_2\|_{L_T^\I L^2_x}^2\; .
  }
   For the contribution of $A_{2,j}, \; j\in J_{N_1}$, we first notice that according to the definition \eqref{def_lambda} of $ \Lambda_a $, using  the space Fourier transform, it is easy to check that
   \EQQS{
    &\La_a\Bigl( P_{N_1} e^{-(t-c_j)L_{\al+1}} u_1(c_j), F_{2,j}\Bigr)\\
    &= \int_{c_j}^t \La_a\Bigl(P_{N_1} e^{-(t-c_j)L_{\al+1}} u_1(c_j),e^{-tL_{\al+1}}  g_{N_2}(t_2)\Bigr) \, d t_2 \;.
  }
  Then we see from the Minkowski inequality, \eqref{cor1}, \eqref{eq2.2ana} and the unitarity of $ e^{-t L_{\alpha+1}} $ in $ L^2(\T) $ that
  \EQQS{
    &\sum_{j\in J_{N_1}} A_{2,j}^2\\
    & \lesssim  \sum_{j\in J_{N_1}} \Bigl( \int_{I_j,N_1}
    \Bigl\| \La_a\Bigl(P_{N_1} e^{-(t-c_j)L_{\al+1}} u_1(c_j),e^{-tL_{\al+1}} g_{N_2}(t_2)\Bigr)\Bigr\|_{L^2(I_{j,N_1};L^2)} \, d t_2\Bigr)^2 \\
    & \lesssim    \sum_{j\in J_{N_1}}  T^{\frac 12}  N_1^{-\frac{\al}{2}}N_2^2
      \|P_{N_1} u_1(c_j) \|_{L_x^2}^2
      \Bigl(\int_{I_{j,N_1}}
      \| e^{t_2L_{\al+1}} P_{N_2} f_2(t_2) \|_{L_x^2}\,
      dt_2\bigg)^2\\
    &\lesssim T^{\frac 12}  N_1^{-\frac{\al}{2}}N_2^2 \|P_{N_2} f_2\|_{L_T^\I L^2_x}^2 (N_1^{-1} T)^2
    \sum_{j\in J_{N_1}}  |I_{j,N_1}|^{-1}
      \int_{I_{j,N_1}} \|P_{N_1} u_1(t)\|_{L_x^2}^2\, dt \\
    &\lesssim  T^{\frac{3}{2}} \, N_1^{1-\frac{\al}{2}}  \| P_{ N_1}u_1\|_{L^2_{T,x}}^2 \|P_{N_2} f_2\|_{L_T^\I L^2_x}^2.
  }
  Here, we used the definition of $c_j=c_{j,N_1}$ again in the third inequality.
  The contribution of $A_{3,j}, \; j\in J_{N_1}$, is estimated in the same way as above:
  \EQQS{
    \sum_{j\in J_{N_1}} A_{3,j}^2
    &\lesssim T^{\frac 12} \,N_1^{-\frac{\al}{2}}N_1^2 \sum_{j\in J_{N_1}}
      \|P_{N_2} u_2(c_j) \|_{L_x^2}^2
      \Bigl(\int_{I_{j,N_1}}
      \|P_{ N_1} f_1(t_1) \|_{L_x^2}\, dt_1\bigg)^2\\
    &\lesssim T^{\frac 12}  \|P_{N_2} u_2\|_{L_T^\I L^2_x}^2 N_1^{-\frac{\al}{2}}  N_1^2
    \sum_{j\in J_{N_1}}  |I_{j,N_1}|
      \int_{I_{j,N_1}} \|P_{N_1} f_1(t_1)\|_{L_x^2}^2\, dt_1 \\
    &\lesssim   T^{\frac{3}{2}} N_1^{1-\frac{\al}{2}}  \| P_{ N}f _1\|_{L^2_{T,x}}^2 \|P_{N_2} u_2\|_{L_T^\I L^2_x}^2.
  }
  Here, we used the H\"older inequality in $t_2$ in the second inequality.
  Finally, to estimate the contribution of $A_{4,j}, \; j\in J_{N_1}$, we first notice  that
  \EQQS{
    \La_a(F_{1,j},F_{2,j})= \int_{c_j}^t \int_{c_j}^t \La_a\Bigl(e^{-tL_{\al+1}}  g_{N_1}(t_1),e^{-tL_{\al+1}}  g_{N_2}(t_2)\Bigr) \, d t_2 \, d t_1
  }
   and use the Minkowski inequality together with \eqref{cor1} and  \eqref{eq2.2ana}  to get
  \EQQS{
    &\sum_{j\in J_{N_1}} A_{4,j}^2\\
    & \lesssim \sum_{j\in J_{N_1}} \Bigl( \int_{I_{j,N_1}} \int_{I_{j,N_1}}
    \Bigl\| \La_a\Bigl(e^{-tL_{\al+1}}  g_{N_1}(t_1),e^{-tL_{\al+1}}  g_{N_2}(t_2)\Bigr)\Bigr\|_{L^2(I_{j,N_1};L^2)} \, d t_2\, dt_1\Bigr)^2 \\
    & \lesssim   T^{\frac 12}   N_1^{2-\frac{\al}{2}} N_2^2   \sum_{j\in J_{N_1}} \Bigl(\int_{I_{j,N_1}}
      \|P_{N_1} f_1(t_1)\|_{L_x^2}\,  dt_1\Bigr)^2
      \Bigl(\int_{I_{j,N_1}}
      \|P_{N_2} f_2(t_2)\|_{L_x^2}\, dt_2 \bigg)^2 \\
    &\lesssim   T^{\frac 12} \|P_{N_2} f_2\|_{L_T^\I L^2_x}^2
     N_1^{2-\frac{\al}{2}}  N_2^2
    \sum_{j\in J_{N_1}}  |I_{j,N_1}|^3
      \int_{I_{j,N_1}} \|P_{N_1} f_1(t_1)\|_{L_x^2}^2\,  dt_1 \\
    &\lesssim  T^{\frac{7}{2}}  N_1^{1-\frac{\al}{2}}
     \| P_{ N_1}f_1\|_{L^2_{T,x}}^2
     \|P_{N_2} f_2\|_{L_T^\I L^2_x}^2
    }
  since $N_1\ge N_2$.
  This completes the proof.
\end{proof}

\begin{proof}[Proof of Proposition \ref{prop_bistri2}]
  The strategy of the proof is exactly the same as the one of Proposition \ref{prop_bistri} but  based on \eqref{cor2}
   instead of \eqref{cor1}.
 \end{proof}

\section{A Priori Estimate}

\subsection{Preliminary Technical Estimates}

Let us denote by $\1_T$ the characteristic function of the interval $]0,T[$.
For nonresonant interactions, we recover the derivative loss by Bourgain type estimates.
For that purpose, we first use $\1_T$ to extend a function on $[0,t]$ to a function on $\R$.
As pointed out in \cite{MV15}, $\1_T$ does not commute with $Q_L$.
Moreover, we use $X^{s-1,1}$--norm whereas $\1_T$ belongs to $H^s(\R)$ for $s<1/2$.
To avoid this difficulty, following \cite{MV15}, we decompose $\1_T$ as
\EQS{
  \1_T=\1_{T,R}^{\textrm{low}}+\1_{T,R}^{\textrm{high}},
  \quad \textrm{with} \quad
  \F_t(\1_{T,R}^{\textrm{low}})(\ta)=\chi(\ta/R)\F_t(\1_T)(\ta),
}
for some $R>0$ to be fixed later.
See also Remark 4.1 in \cite{MT22}.

In what follows, we prepare estimates and fix notation which will be used for nonresonant interactions in the proofs of Propositions \ref{prop_apri} and \ref{difdif}.

\begin{lem}[Lemma 3.5 in \cite{MPV19}]
  Let $1\le p\le \I$ and
  let $L$ be a non-homogeneous dyadic number.
  Then the operator $Q_{\le L}$ is bounded in $L_t^p L_x^2$ uniformly in $L$.
  In other words,
  \EQS{\label{eq4.3}
    \|Q_{\le L}u\|_{L_t^p L_x^2}\lesssim \|u\|_{L_t^p L_x^2},
  }
  for all $u\in L_t^p L_x^2$ and the implicit constant appearing in \eqref{eq4.3} does not depend on $L$.
\end{lem}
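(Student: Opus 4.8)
The plan is to identify the Fourier symbol of $Q_{\le L}$ explicitly and then conjugate by the free group $U_\al$ to turn it into a pure time-frequency multiplier. First I would record that $Q_{\le L}=\sum_{K\le L}Q_K$ has space-time Fourier symbol $\sum_{K\le L}\phi_K(\ta-p_{\al+1}(\xi))$. Using $\phi(\xi)=\chi(\xi)-\chi(2\xi)$ and $\phi_0=\chi(2\xi)$, the sum $\sum_{K\le 2^m}\phi_K$ telescopes: one gets $\chi(2^{-m}\xi)-\chi(2\xi)+\chi(2\xi)=\chi(\xi/L)$ with $L=2^m$. Hence $Q_{\le L}$ is the Fourier multiplier with symbol $\chi\!\big((\ta-p_{\al+1}(\xi))/L\big)$, i.e.\ a smooth cutoff to modulations $\lesssim L$ measured relative to the characteristic surface $\ta=p_{\al+1}(\xi)$.

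Next I would flatten that surface. Setting $v:=U_\al(-\cdot)u$ (so that $\F_{t,x}v(\ta,\xi)=\tilde u(\ta+p_{\al+1}(\xi),\xi)$), a direct computation with the Fourier transform shows that $U_\al(-\cdot)\,Q_{\le L}u$ has symbol $\chi(\ta/L)\,\tilde v(\ta,\xi)$, a multiplier acting in the time variable only. In other words $Q_{\le L}=U_\al(\cdot)\,S_L\,U_\al(-\cdot)$, where $S_L$ is the time-frequency multiplier with symbol $\chi(\ta/L)$, independent of $\xi$. Since $U_\al(\pm t)$ is unitary on $L^2_x$ for each fixed $t$, the maps $u\mapsto U_\al(-\cdot)u$ and $w\mapsto U_\al(\cdot)w$ are isometries of $L_t^pL_x^2$ for every $1\le p\le\I$. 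Therefore it suffices to prove $\|S_Lv\|_{L_t^pL_x^2}\lesssim\|v\|_{L_t^pL_x^2}$ uniformly in $L$.

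For the last step I would observe that $S_L$ is convolution in time (acting pointwise in $x$) against the kernel $K_L$ whose time Fourier transform is $\chi(\ta/L)$. By scaling, $K_L(t)=L\,\check\chi(Lt)$ up to a harmless constant, and since $\chi\in C_0^\I$ we have $\check\chi\in\mathcal S\subset L^1$, whence $\|K_L\|_{L_t^1}=\|\check\chi\|_{L^1}$ is finite and independent of $L$. Applying Minkowski's integral inequality (Young's inequality for Banach-valued convolution) gives $\|S_Lv\|_{L_t^pL_x^2}=\|K_L*_t v\|_{L_t^pL_x^2}\le\|K_L\|_{L_t^1}\|v\|_{L_t^pL_x^2}$, which is the claimed bound for all $1\le p\le\I$. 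Combining the three steps yields the lemma. I do not expect a genuine obstacle here: the only point requiring care is tracking Fourier conventions in the conjugation identity $Q_{\le L}=U_\al S_L U_\al^{-1}$ and noting that the $L^1_t$-norm of the convolution kernel is dilation invariant, which is exactly what furnishes the uniformity in $L$.
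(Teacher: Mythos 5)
Your proof is correct and is essentially the argument the paper itself relies on (the lemma is quoted from Lemma 3.5 of \cite{MPV19} without proof, and the proof there is precisely this conjugation scheme): the telescoping identity gives the symbol $\chi\big((\tau-p_{\alpha+1}(\xi))/L\big)$, conjugating by $U_\alpha(\cdot)$ — an isometry of $L_t^pL_x^2$ since $p_{\alpha+1}$ is real-valued — turns $Q_{\le L}$ into time-convolution with the kernel $L\check\chi(L\cdot)$, whose $L_t^1$-norm is dilation-invariant, and Minkowski/Young for Banach-valued convolution then yields the bound uniformly in $L$. The only detail you leave implicit is the degenerate dyadic value $L=0$, where $Q_{\le 0}=Q_0$ has symbol $\chi\big(2(\tau-p_{\alpha+1}(\xi))\big)$ and the identical argument applies with kernel $\tfrac12\check\chi(\cdot/2)$.
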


\begin{lem}[Lemma 3.6 in \cite{MPV19}]
  For any $R>0$ and $T>0$, it holds
  \EQS{\label{eq4.1}
    \|\1_{T,R}^{\mathrm{high}}\|_{L^1}&\lesssim T\wedge R^{-1},\\
    \label{eq4.1.1}
    \|\1_{T,R}^{\mathrm{low}}\|_{L^1}&\lesssim T
  }
  and
  \EQS{\label{eq4.2}
    \|\1_{T,R}^{\mathrm{high}}\|_{L^\I}
    +\|\1_{T,R}^{\mathrm{low}}\|_{L^\I}\lesssim 1.
  }
\end{lem}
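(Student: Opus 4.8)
The plan is to realize the low-frequency piece as a convolution and reduce everything to elementary properties of the resulting kernel. Writing $\psi:=\F_t^{-1}\chi$ and $K_R(t):=R\,\psi(Rt)$, the defining relation $\F_t(\1_{T,R}^{\mathrm{low}})=\chi(\cdot/R)\F_t(\1_T)$ becomes, after undoing the Fourier transform,
\EQQS{
  \1_{T,R}^{\mathrm{low}}=\1_T\ast K_R .
}
Since $\chi$ is a smooth compactly supported (hence Schwartz) bump, $\psi$ is Schwartz; consequently $\|K_R\|_{L^1}=\|\psi\|_{L^1}=:c_0$ is a finite constant independent of $R$, and, because $\chi(0)=1$,
\EQQS{
  \int_\R K_R(s)\,ds=(\F_t\psi)(0)=\chi(0)=1 .
}
These two facts ($\|K_R\|_{L^1}\lesssim 1$ and $\int_\R K_R=1$) are all I will use about the cutoff.

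The bounds \eqref{eq4.2}, the upper bound $T$ in \eqref{eq4.1}, and \eqref{eq4.1.1} are immediate from Young's inequality. Indeed $\|\1_{T,R}^{\mathrm{low}}\|_{L^\I}\le\|\1_T\|_{L^\I}\|K_R\|_{L^1}\lesssim 1$ and $\|\1_{T,R}^{\mathrm{low}}\|_{L^1}\le\|\1_T\|_{L^1}\|K_R\|_{L^1}\lesssim T$, the latter being \eqref{eq4.1.1}; the corresponding bounds for $\1_{T,R}^{\mathrm{high}}=\1_T-\1_{T,R}^{\mathrm{low}}$ then follow from the triangle inequality, using $\|\1_T\|_{L^\I}=1$ and $\|\1_T\|_{L^1}=T$.

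The only nontrivial point is the gain $R^{-1}$ in \eqref{eq4.1}, and the hard part is to exploit the cancellation encoded in $\int_\R K_R=1$. First I would use this normalization to write
\EQQS{
  \1_{T,R}^{\mathrm{high}}(t)=\1_T(t)-\int_\R \1_T(t-s)K_R(s)\,ds=\int_\R\bigl[\1_T(t)-\1_T(t-s)\bigr]K_R(s)\,ds .
}
Taking the $L^1_t$-norm and applying Minkowski's integral inequality reduces matters to the $L^1$-modulus of continuity of the characteristic function, for which a direct computation of the symmetric difference of $]0,T[$ and $]s,s+T[$ gives $\|\1_T-\1_T(\cdot-s)\|_{L^1}=2\min(|s|,T)\le 2|s|$. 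Hence
\EQQS{
  \|\1_{T,R}^{\mathrm{high}}\|_{L^1}\le\int_\R 2\min(|s|,T)\,|K_R(s)|\,ds\le 2\int_\R |s|\,|K_R(s)|\,ds=2R^{-1}\int_\R |u|\,|\psi(u)|\,du\lesssim R^{-1},
}
the last integral being finite by the Schwartz decay of $\psi$. Combining this with the already established bound $\|\1_{T,R}^{\mathrm{high}}\|_{L^1}\lesssim T$ yields $\|\1_{T,R}^{\mathrm{high}}\|_{L^1}\lesssim T\wedge R^{-1}$, which completes the proof.
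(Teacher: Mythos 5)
Your proof is correct. Note that the paper does not prove this lemma itself but quotes it as Lemma 3.6 of \cite{MPV19}, and the argument there is essentially the one you give: realize $\1_{T,R}^{\mathrm{low}}=\1_T\ast K_R$ with a Schwartz kernel satisfying $\|K_R\|_{L^1}\lesssim 1$ and $\int_\R K_R=1$, deduce \eqref{eq4.2}, \eqref{eq4.1.1} and the bound by $T$ in \eqref{eq4.1} from Young's inequality and the triangle inequality, and obtain the $R^{-1}$ gain from the cancellation $\int_\R K_R=1$ combined with $\|\1_T-\1_T(\cdot-s)\|_{L^1}= 2\min(|s|,T)$ and the decay $\int_\R |s|\,|K_R(s)|\,ds\lesssim R^{-1}$.
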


\begin{lem}[Lemma 3.7 in \cite{MPV19}]
  Assume that $T>0$, $R>0$, and $L\gg R$.
  Then, it holds
  \EQS{\label{eq4.6}
    \|Q_L(\1_{T,R}^{\mathrm{low}}u)\|_{L_{t,x}^2}
    \lesssim \|Q_{\sim L} u\|_{L_{t,x}^2},
  }
  for all $u\in L^2(\R_t\times\T_x)$.
\end{lem}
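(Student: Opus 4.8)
The plan is to exploit the fact that multiplication by $\1_{T,R}^{\mathrm{low}}$ barely moves the modulation variable $\ta-p_{\al+1}(\xi)$, so that when $L\gg R$ the outer projection $Q_L$ can only pick out frequencies at which the input \emph{already} carries modulation of size $\sim L$. First I would record the essential support property: by definition $\F_t(\1_{T,R}^{\mathrm{low}})=\chi(\cdot/R)\F_t(\1_T)$, and since $\supp\chi\subset[-2,2]$ we have $\supp\F_t(\1_{T,R}^{\mathrm{low}})\subset\{|\ta|\le 2R\}$. Because $\1_{T,R}^{\mathrm{low}}$ depends only on $t$, multiplying a function by it acts on the space--time Fourier transform as a convolution in the $\ta$--variable alone by a distribution supported in $[-2R,2R]$, leaving the $\xi$--variable untouched.

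Next I would decompose $u=\sum_{L'}Q_{L'}u$ and track supports. For a dyadic $L'\ge 1$ the function $Q_{L'}u$ has space--time Fourier support in $\{L'/2\le |\ta-p_{\al+1}(\xi)|\le 2L'\}$ (with the obvious modification for the non-homogeneous block at $L'\sim 1$), since $\supp\phi\subset\{1/2\le|\cdot|\le 2\}$. After multiplication by $\1_{T,R}^{\mathrm{low}}$ the convolution by a kernel supported in $|\ta|\le 2R$ displaces the modulation $\ta-p_{\al+1}(\xi)$ by at most $2R$, so $\1_{T,R}^{\mathrm{low}}Q_{L'}u$ is supported in $\{L'/2-2R\le |\ta-p_{\al+1}(\xi)|\le 2L'+2R\}$. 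Applying $Q_L$ then requires $L/2\le |\ta-p_{\al+1}(\xi)|\le 2L$, and since $L\gg R$ the two modulation ranges can overlap only when $L'\sim L$. Consequently $Q_L(\1_{T,R}^{\mathrm{low}}Q_{L'}u)=0$ whenever $L'\not\sim L$, and therefore $Q_L(\1_{T,R}^{\mathrm{low}}u)=Q_L(\1_{T,R}^{\mathrm{low}}Q_{\sim L}u)$, where $Q_{\sim L}:=\sum_{L'\sim L}Q_{L'}$.

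Finally I would close the estimate by crude bounds. Since $Q_L$ is a Fourier multiplier with symbol $\psi_L$ bounded uniformly, Plancherel gives $\|Q_L g\|_{L_{t,x}^2}\lesssim \|g\|_{L_{t,x}^2}$, while \eqref{eq4.2} provides $\|\1_{T,R}^{\mathrm{low}}\|_{L^\I}\lesssim 1$. Hence
\EQQS{
  \|Q_L(\1_{T,R}^{\mathrm{low}}u)\|_{L_{t,x}^2}
  &=\|Q_L(\1_{T,R}^{\mathrm{low}}Q_{\sim L}u)\|_{L_{t,x}^2}
   \lesssim \|\1_{T,R}^{\mathrm{low}}Q_{\sim L}u\|_{L_{t,x}^2}\\
  &\lesssim \|\1_{T,R}^{\mathrm{low}}\|_{L^\I}\|Q_{\sim L}u\|_{L_{t,x}^2}
   \lesssim \|Q_{\sim L}u\|_{L_{t,x}^2},
}
which is the desired conclusion. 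The only delicate point is the support bookkeeping in the second step: one must make the informal statement ``$L\gg R$ forces $L'\sim L$'' quantitative, being careful with the dyadic endpoints and with the low-modulation block. No genuine analytic difficulty arises, the content of the lemma being entirely the orthogonality that $L\gg R$ induces.
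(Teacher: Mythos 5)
Your proof is correct and is essentially the argument behind Lemma 3.7 of \cite{MPV19}, which the paper cites rather than reproduces: since $\F_t(\1_{T,R}^{\mathrm{low}})$ is supported in $\{|\ta|\le 2R\}$, multiplication by $\1_{T,R}^{\mathrm{low}}$ shifts the modulation $\ta-p_{\al+1}(\xi)$ by at most $2R\ll L$, forcing $Q_L(\1_{T,R}^{\mathrm{low}}u)=Q_L(\1_{T,R}^{\mathrm{low}}Q_{\sim L}u)$, after which the uniform $L^2$-boundedness of $Q_L$ and the bound \eqref{eq4.2} on $\|\1_{T,R}^{\mathrm{low}}\|_{L^\I}$ conclude. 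The endpoint bookkeeping you flag (the non-homogeneous block $Q_0$ when $L\lesssim 1$) is harmless provided $Q_{\sim L}$ is understood to include it, and in every application in this paper both $L$ and $R$ are large, so no genuine gap remains.
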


\begin{defn}
  Let $j\in\N$.
  We define $\Om_j(\xi_1,\dots,\xi_{j+1}):\Z^{j+1}\to\R$ as
  \EQQS{
    \Om_j(\xi_1,\dots,\xi_{j+1}):=\sum_{n=1}^{j+1}p_{\al+1}(\xi_n)
  }
  for $(\xi_1,\dots,\xi_{j+1})\in\Z^{j+1}$, where $p_{\al+1}$ satisfies Hypothesis 1.
\end{defn}

\begin{lem}\label{lem_res1}
  Let $k\ge 1 $ and $(\xi_1,\dots,\xi_{k+2})\in\Z^{k+2}$ satisfy $\sum_{j=1}^{k+2}\xi_j=0$.
  Assume that $|\xi_1|\sim |\xi_2|\gtrsim |\xi_3| $ if $ k= 1 $ or $|\xi_1|\sim |\xi_2|\gtrsim |\xi_3|\gg  k  \max_{j\ge4 }|\xi_j|$ if $ k\ge 2 $.
  Then,
  \EQQS{
    |\Om_{k+1}(\xi_1,\dots,\xi_{k+2})|\gtrsim |\xi_3||\xi_1|^{\al}
  }
  for $|\xi_1|\gg (\max_{\xi\in[0,\xi_0]}|p_{\al+1}'(\xi)|)^{\frac{1}{\al}}$.
\end{lem}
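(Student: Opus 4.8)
The plan is to estimate the resonance function $\Om_{k+1}=\sum_{j=1}^{k+2}p_{\al+1}(\xi_j)$ directly by isolating the contribution of the three dominant frequencies $\xi_1,\xi_2,\xi_3$ and treating the remaining frequencies $\xi_4,\dots,\xi_{k+2}$ as a controllable perturbation. Using the constraint $\sum_{j=1}^{k+2}\xi_j=0$, I would write $\xi_3=-\xi_1-\xi_2-\sum_{j\ge 4}\xi_j$ and think of $\Om_{k+1}$ as essentially the three-frequency resonance $p_{\al+1}(\xi_1)+p_{\al+1}(\xi_2)+p_{\al+1}(\xi_1+\xi_2)$, up to corrections coming from the small frequencies. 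Since $p_{\al+1}$ is odd, the expected main term is of Benjamin-Ono/KdV type, and the heuristic is that when $|\xi_1|\sim|\xi_2|\gg|\xi_3|$ the leading behavior is governed by $p_{\al+1}'$ evaluated at scale $|\xi_1|$ times the gap $|\xi_3|$, which by Hypothesis \ref{hyp1} gives the claimed lower bound $|\xi_3|\,|\xi_1|^\al$.

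The key computational step is to use the fundamental theorem of calculus twice. First I would fix the three main frequencies and set $G(\eta):=p_{\al+1}(\xi_1)+p_{\al+1}(\xi_2)+p_{\al+1}(-\xi_1-\xi_2-\eta)$ where $\eta=\sum_{j\ge4}\xi_j$ and also add the scattered terms $\sum_{j\ge4}p_{\al+1}(\xi_j)$ back in. The cleanest route is to write $\Om_{k+1}$ as a sum over pairings that telescopes: exploiting oddness, $p_{\al+1}(\xi_1)+p_{\al+1}(\xi_2)=p_{\al+1}(\xi_1)-p_{\al+1}(-\xi_2)=\int_{-\xi_2}^{\xi_1}p_{\al+1}'(\theta)\,d\theta$, and since $|\xi_1|\sim|\xi_2|$ with $\xi_1,\xi_2$ of opposite sign (forced by $|\xi_3|\ll|\xi_1|$ and the sum constraint), this integral runs over an interval of length $\sim|\xi_3|$ near $\pm|\xi_1|$. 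By Hypothesis \ref{hyp1}, $p_{\al+1}'(\theta)\sim|\theta|^\al\sim|\xi_1|^\al$ on that interval once $|\xi_1|\ge\xi_0$, which yields $|p_{\al+1}(\xi_1)+p_{\al+1}(\xi_2)|\gtrsim|\xi_3|\,|\xi_1|^\al$. The role of the hypothesis $|\xi_1|\gg(\max_{\xi\in[0,\xi_0]}|p_{\al+1}'(\xi)|)^{1/\al}$ is precisely to guarantee that the endpoints of the integration interval lie in the region $\theta\ge\xi_0$ where the asymptotics of Hypothesis \ref{hyp1} apply, and to absorb the low-frequency part of $p_{\al+1}$ into the main term.

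It then remains to control the error term $E:=p_{\al+1}(\xi_3)+\sum_{j\ge4}p_{\al+1}(\xi_j)$ and show it is negligible compared to the main contribution $|\xi_3|\,|\xi_1|^\al$. For the term $p_{\al+1}(\xi_3)$, since $p_{\al+1}'(\theta)\lesssim|\theta|^\al+1$ we have $|p_{\al+1}(\xi_3)|\lesssim|\xi_3|(|\xi_3|^\al+1)\lesssim|\xi_3|\,|\xi_1|^\al$ with a small implicit constant when $|\xi_3|\ll|\xi_1|$, but in fact $|\xi_3|$ may be comparable to $|\xi_1|$ when $k=1$, so the argument must be arranged so that this term is part of the main term rather than an error — this is automatic if I keep all three of $\xi_1,\xi_2,\xi_3$ together and only peel off $\xi_4,\dots,\xi_{k+2}$. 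For the genuinely small frequencies, the separation $|\xi_3|\gg k\max_{j\ge4}|\xi_j|$ (when $k\ge2$) gives $\sum_{j\ge4}|p_{\al+1}(\xi_j)|\lesssim k\max_{j\ge4}|\xi_j|\,(\max_{j\ge4}|\xi_j|^\al+1)\ll|\xi_3||\xi_1|^\al$, so these terms are strictly dominated. \textbf{The main obstacle} I anticipate is bookkeeping the signs and the case distinction $k=1$ versus $k\ge2$: I must verify that $\xi_1$ and $\xi_2$ genuinely have opposite signs (so that the integral representation spans a short interval of length $\sim|\xi_3|$ rather than a long one of length $\sim|\xi_1|$), and that the factor $k$ in the frequency-separation hypothesis is exactly what is needed to dominate the $k-1$ small terms uniformly in $k$. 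Once the correct interval of integration is identified, the lower bound follows directly from the mean value theorem and Hypothesis \ref{hyp1}.
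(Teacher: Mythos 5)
Your argument covers the genuinely nonresonant regime $|\xi_3|\ll|\xi_1|$ correctly: there $\xi_1,\xi_2$ are indeed forced to have opposite signs, $|\xi_1+\xi_2|\sim|\xi_3|$ (the factor $k$ in the separation hypothesis giving $\sum_{j\ge4}|\xi_j|\ll|\xi_3|$), the integral $p_{\al+1}(\xi_1)+p_{\al+1}(\xi_2)=\int_{-\xi_2}^{\xi_1}p_{\al+1}'(\theta)\,d\theta$ runs over an interval of length $\sim|\xi_3|$ contained in $[c|\xi_1|,C|\xi_1|]$ where $p_{\al+1}'\sim|\xi_1|^\al$, and the terms $p_{\al+1}(\xi_3)$, $p_{\al+1}(\xi_j)$, $j\ge 4$, are negligible (the largeness assumption on $|\xi_1|$ entering exactly as you say). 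The gap is the case you set aside as ``automatic''. The hypothesis reads $|\xi_1|\sim|\xi_2|\gtrsim|\xi_3|$, not $\gg$, so all three large frequencies may be comparable; then $\xi_1$ and $\xi_2$ need not have opposite signs (take $k=1$, $\xi_1=\xi_2=N$, $\xi_3=-2N$), your main term and your ``error'' $p_{\al+1}(\xi_3)$ are both of size $\sim N^{\al+1}$, and ``keeping the three frequencies together'' means proving a lower bound on a difference of two quantities of the same order, namely
\[
  |\Om_{2}(\xi_1,\xi_2,\xi_3)|=|2p_{\al+1}(N)-p_{\al+1}(2N)|
  =\Big|\int_0^N\big(p_{\al+1}'(N+s)-p_{\al+1}'(s)\big)\,ds\Big|,
\]
in which real cancellation can occur.

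This cancellation cannot be ruled out using the only part of Hypothesis \ref{hyp1} your proof invokes, namely $p_{\al+1}'(\theta)\sim\theta^\al$. Indeed, if $c_1\theta^\al\le p_{\al+1}'(\theta)\le c_2\theta^\al$ with $c_2>(2^{\al+1}-1)c_1$, one can build an odd symbol whose derivative slides from $c_2\theta^\al$ down to $c_1\theta^\al$ across the octave $[N,2N]$ so that $\int_0^N p_{\al+1}'=\int_N^{2N}p_{\al+1}'$, i.e.\ $\Om_2=0$ at this configuration: such a symbol satisfies your working hypothesis but violates the conclusion of the lemma. What makes the lemma true is the second half of Hypothesis \ref{hyp1}, $p_{\al+1}''(\theta)\sim\theta^{\al-1}>0$, which never appears in your proof: convexity kills the cancellation, since for $s\in[\xi_0,N]$ one has $p_{\al+1}'(N+s)-p_{\al+1}'(s)=\int_s^{N+s}p_{\al+1}''(\theta)\,d\theta\gtrsim (N+s)^\al-s^\al\ge N^\al$ for $\al\ge1$, the range $s\in[0,\xi_0]$ being handled by $|\xi_1|^\al\gg\max_{[0,\xi_0]}|p_{\al+1}'|$; equivalently one can write $p_{\al+1}(a+b)-p_{\al+1}(a)-p_{\al+1}(b)=\int_0^a\int_0^b p_{\al+1}''(s+t)\,dt\,ds\gtrsim (a\wedge b)(a\vee b)^{\al}$, which yields the claimed bound in both regimes at once. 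This is the route taken in the proof the paper cites (Lemma 4.4 in \cite{MT22}). Note finally that the difficulty is not confined to $k=1$ nor to same-sign pairs: for every $k$ the hypothesis allows $|\xi_3|\sim|\xi_1|$, and e.g.\ $\xi_1=2N$, $\xi_2=\xi_3=-N$ produces the same quantity $2p_{\al+1}(N)-p_{\al+1}(2N)$, so no amount of sign bookkeeping within your first-derivative argument can close this case.
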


\begin{proof}
  See Lemma 4.4 in \cite{MT22}.
\end{proof}

\subsection{Estimates for Solutions to \eqref{eq1}}

\begin{lem}\label{lem1}
Let $\{\om_N\}$ be an acceptable frequency weight.
Let $ 0<T<1$, $ s>1/2 $  and $ u\in L_T^\I H_\om^s $  be a solution to \eqref{eq1} associated with an initial datum
 $ u_0\in H^s_\om(\T) $.
Then $ u\in Z^s_{\om,T}$ and it holds
\begin{equation}\label{estXregular}
\|u\|_{Z^s_{\om,T}}  \lesssim \|u\|_{L^\infty_T H^s_\om} +G(\|u\|_{L^\infty_{T,x}}) \|u\|_{L^\infty_T H^{s}_\om}\; ,
\end{equation}
where $G=G[f]$ is a smooth function that is increasing and non-negative on $\R_+$.
 Moreover, for any couple $(u, v) \in   (L^\infty_T H^s)^2 $ of solutions
to \eqref{eq1} associated with a couple of initial data
 $ (u_0,v_0)\in (H^s(\T))^2 $  it holds
\begin{equation}\label{estdiffXregular}
\|u-v\|_{Z^{s-1}_{T}}  \lesssim \|u-v\|_{L^\infty_T H_x^{s-1}} +  G(\|u\|_{L^\infty_T H_x^{s}}+\|v\|_{L^\infty_T H_x^{s}})  \|u-v\|_{L^\infty_T H_x^{s-1}} \; .
\end{equation}
\end{lem}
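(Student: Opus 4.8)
The plan is to prove both estimates by exhibiting a concrete extension of the solution to $\R\times\T$ whose full space-time Bourgain norm is controlled, thereby bounding the infimum defining the restriction norm. Since $Z^s_{\om,T}=L^\infty_T H^s_\om\cap X^{s-1,1}_{\om,T}$, the $L^\infty_T H^s_\om$ part of \eqref{estXregular} is already present on the right-hand side, so everything reduces to estimating $\|u\|_{X^{s-1,1}_{\om,T}}$. For this I would use the explicit extension $\rho_T(u)$ from \eqref{def_ext}, which coincides with $u$ on $[0,T]$ (there $\mu_T(t)=t$ and $\chi(t)=1$) and is supported in $\{|t|\le 2\}$. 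The starting point is the elementary Plancherel identity
\EQQS{
  \|\rho_T(u)\|_{X^{s-1,1}_\om}^2
  =\|\rho_T(u)\|_{X^{s-1,0}_\om}^2+\|(\p_t+L_{\al+1})\rho_T(u)\|_{X^{s-1,0}_\om}^2,
}
which holds because $\LR{\ta-p_{\al+1}(\xi)}^2=1+|\ta-p_{\al+1}(\xi)|^2$ and $(\p_t+L_{\al+1})$ acts as multiplication by $i(\ta-p_{\al+1}(\xi))$ on the space-time Fourier side; note that $X^{s-1,0}_\om$ is just $L^2_t H^{s-1}_\om$.

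The first term is harmless: as $\rho_T(u)$ has compact time support, $\|\rho_T(u)\|_{L^2_t H^{s-1}_\om}\lesssim\|\rho_T(u)\|_{L^\infty_t H^{s-1}_\om}\lesssim\|u\|_{L^\infty_T H^{s-1}_\om}\le\|u\|_{L^\infty_T H^s_\om}$ by \eqref{eq2.1single}. The key computation is the second term. Using $U_\al(t)=e^{-tL_{\al+1}}$ one has the algebraic identity $(\p_t+L_{\al+1})[U_\al(t)g(t)]=U_\al(t)g'(t)$, which I would apply with $g(t)=\chi(t)U_\al(-\mu_T(t))u(\mu_T(t))$. The Duhamel formula satisfied by $u$ (see Remark \ref{rem2}) gives $\p_s[U_\al(-s)u(s)]=-U_\al(-s)\p_x f(u(s))$ for $s\in[0,T]$, so the chain rule yields
\EQQS{
  (\p_t+L_{\al+1})\rho_T(u)(t)
  &=U_\al(t)\chi'(t)U_\al(-\mu_T(t))u(\mu_T(t))\\
  &\quad-\mu_T'(t)\chi(t)U_\al(t)U_\al(-\mu_T(t))\p_x f(u(\mu_T(t))).
}
The gain here is that the rough term $L_{\al+1}u\in H^{s-\al-1}$ has cancelled, leaving only quantities controlled in $H^{s-1}_\om$.

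It then remains to estimate the $L^2_t H^{s-1}_\om$ norm of the right-hand side, using that $U_\al(\pm\cdot)$ is unitary on each $H^\sigma_\om$ (it commutes with $P_N$), that $|\chi|,|\mu_T'|\le1$, and that $\mu_T(t)\in[0,T]$. The first piece is bounded by $\|u\|_{L^\infty_T H^{s-1}_\om}\le\|u\|_{L^\infty_T H^s_\om}$ since $\chi'$ is bounded with compact support. For the second piece, the Fourier-multiplier bound $\|\p_x g\|_{H^{s-1}_\om}\lesssim\|g\|_{H^s_\om}$ together with the composition estimate \eqref{eq2.2ana} (applied to $f-f(0)$, which is licit as $\p_x f(u)=\p_x(f(u)-f(0))$) gives, pointwise in $t$, $\|\p_x f(u(\mu_T(t)))\|_{H^{s-1}_\om}\lesssim G(\|u\|_{L^\infty_{T,x}})\|u\|_{L^\infty_T H^s_\om}$; integrating over the interval $[0,2T]$ on which $\mu_T'\neq0$ and using $T<1$ produces a factor $T^{1/2}\le1$ and hence the bound $G(\|u\|_{L^\infty_{T,x}})\|u\|_{L^\infty_T H^s_\om}$. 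Collecting the two terms proves \eqref{estXregular}. For the difference estimate \eqref{estdiffXregular} I would run the identical scheme with $w:=u-v$, which solves $\p_t w+L_{\al+1}w+\p_x(f(u)-f(v))=0$, and with the extension $\rho_T(w)=\rho_T(u)-\rho_T(v)$; one now estimates $\|w\|_{X^{s-2,1}_T}$ (recall $Z^{s-1}=L^\infty_t H^{s-1}\cap X^{s-2,1}$), the only change being that \eqref{eq2.2ana} is replaced by the Lipschitz estimate \eqref{eq2.3ana} with $\theta=s-1$, namely $\|f(u)-f(v)\|_{H^{s-1}}\le G(\|u\|_{H^s}+\|v\|_{H^s})\|u-v\|_{H^{s-1}}$. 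The one point requiring care, and the main technical obstacle, is the rigorous justification of the differentiation $\p_s[U_\al(-s)u(s)]=-U_\al(-s)\p_x f(u(s))$ and of the cancellation of the $L_{\al+1}$ terms at the low regularity $H^{s-\al-1}$ at which the equation a priori holds; this is most safely handled by differentiating the Duhamel representation of $U_\al(-s)u(s)$ directly rather than the equation itself, so that all identities take place in $H^{s-1}_\om$.
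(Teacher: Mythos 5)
Your proof is correct and takes essentially the intended route: the paper defers this lemma to Lemma 4.7 of \cite{MT22}, whose argument is precisely what you reconstruct --- bound the restriction norms through the explicit extension $\rho_T$, use the exact modulation identity $\|v\|_{X^{s-1,1}_\om}^2=\|v\|_{L^2_t H^{s-1}_\om}^2+\|(\p_t+L_{\al+1})v\|_{L^2_t H^{s-1}_\om}^2$, exploit the cancellation of $L_{\al+1}$ produced by conjugation with the group $U_\al$, and conclude with the composition estimates \eqref{eq2.2ana} and \eqref{eq2.3ana} (the latter being stated for $\theta=s-1$ exactly for this purpose). Your final remark, that the differentiation $\p_s[U_\al(-s)u(s)]=-U_\al(-s)\p_x f(u(s))$ should be justified by differentiating the Duhamel representation guaranteed by Remark \ref{rem2} rather than the equation itself, correctly identifies and resolves the only delicate point.
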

\begin{proof}
  See Lemma 4.7 in \cite{MT22}.
\end{proof}

The following proposition is one of main estimates in the present paper.

\begin{prop}[A priori estimate]\label{prop_apri}
  Let $\{\om^{(\de)}_N\}$ be an acceptable frequency weight with $\de\le2$.
  Let $0<T<1$, $\al\in[1,2]$, and $2\ge s>1/2$ with $s\ge s(\alpha):= 1-\frac \al4$.
  Let $u\in L_T^\I H_\om^s $ be a solution to \eqref{eq1} emanating from $u_0\in H^s_\om(\T)$ on $[0,T]$.
  Then there exists a smooth function $ G=G[f] $ that is increasing and non-negative on $ \R_+ $  such that
  \EQS{\label{P}
    \|u\|_{L_T^\I H_\om^s}^2
    \le \|u_0\|_{H_\om^s}^2 + T^{\nu} G(\|u\|_{Z_{T}^{s(\al)}}+\|u\|_{Z_{T}^{\frac{1}{2}+}})
    \|u\|_{Z_{\om,T}^{s}}
    \|u\|_{L_T^\I H_\om^s}.
  }
  where $ \nu=s(\al)-1/2 $ whenever $ \alpha\in [1,2[ $ and $\nu =0+ $ for $ \alpha=2$.
\end{prop}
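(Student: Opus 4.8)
The plan is to establish the energy estimate \eqref{P} by differentiating the squared frequency-localized norm in time and carefully analyzing the resulting trilinear expressions. First I would start from the equation $\p_t u + L_{\al+1}u + \p_x f(u) = 0$, apply the Littlewood--Paley projector $P_N$, multiply by $\om_N^2(1\vee N)^{2s}P_N u$ and integrate over $\T$. Since $L_{\al+1}$ is skew-adjoint, the dispersive term contributes nothing, and we are left with
\EQQS{
  \frac 12 \frac{d}{dt}\|u\|_{H_\om^s}^2
  = -\sum_N \om_N^2 (1\vee N)^{2s} \int_\T P_N u \, P_N \p_x f(u) \, dx.
}
Writing $f(u) = \sum_{k\ge 1} c_k u^k$ via Hypothesis \ref{hyp2} and expanding $u = \sum u_{N_j}$ in Littlewood--Paley pieces reduces the right-hand side to a sum of multilinear terms indexed by output frequency $N$ and input frequencies $N_1,\dots,N_{k+1}$. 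After integrating back in time over $[0,T]$, the goal is to bound each such contribution by $T^\nu G(\cdots)\|u\|_{Z_{\om,T}^s}\|u\|_{L_T^\I H_\om^s}$.

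The next step is to classify the interactions according to the relative sizes of the frequencies. The non-resonant regime, where the resonance function $\Om$ (controlled via Lemma \ref{lem_res1}) is large, is handled by passing to the extension $\rho_T(u)\in Z_\om^s$ (Lemma \ref{extensionlem}), inserting the decomposition $\1_T = \1_{T,R}^{\mathrm{low}}+\1_{T,R}^{\mathrm{high}}$, and recovering the derivative loss through the modulation weight $\LR{\ta - p_{\al+1}(\xi)}$ using the $X^{s-1,1}$-norm together with Lemmas giving \eqref{eq4.1}--\eqref{eq4.6}. These are essentially the arguments already developed in \cite{MT22}, so I would treat them relatively quickly. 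The delicate point is the \emph{resonant} regime, and within it the worst case flagged in the introduction: three high input frequencies of comparable size $N_1\sim N_2\sim N_3\sim N$ producing an output of the same order. Here integration by parts in $x$ is unavailable to gain derivatives directly, so the strategy is to use Lemma \ref{lem_comm1} to symmetrize the worst derivative onto the lower-frequency factor, and then to estimate the resulting bilinear factor in $L^2_{T,x}$ by the improved bilinear Strichartz estimate \eqref{eq_bistri1.1} of Proposition \ref{prop_bistri}. The gain $(N)^{\frac 12 - \frac\al4}$ from \eqref{eq_bistri1.1}, combined with the two derivatives distributed in the symmetrized form, is exactly what forces the threshold $s\ge s(\al) = 1-\frac\al4$.

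The main technical obstacle will be reconciling the Fourier multiplier produced by Lemma \ref{lem_comm1} with the bilinear estimate: as the authors note before Proposition \ref{prop_bistri}, the commutator structure \eqref{def_pi} does not directly fit the form $\La_a(\cdot,\cdot)$. The plan is therefore to split the symbol of $\Pi$ into a principal part that can be absorbed into the admissible symbol class $\|a\|_{L^\I}\lesssim 1$ of \eqref{def_lambda} and a remainder, applying \eqref{eq_bistri1.1} to the former and the simpler Lemma \ref{lem_comm1} bound to the latter (this is the ``Case 3 of $A_1$'' the text refers to). For interactions where one frequency is genuinely dominant, $N_1\vee N_2 \gg N_1\wedge N_2$, I would instead invoke \eqref{eq_bistri2} with a small parameter $\theta>0$, which as noted in Remark \ref{rem_f1} is needed to extract a strictly positive power $T^\nu$; the attendant loss $N_{\min}^{\theta/2}$ is harmless since $s>1/2$. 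Finally, after summing over all dyadic frequencies, the analyticity bound \eqref{eq2.2ana} converts the Taylor-series sum into the smooth increasing function $G$, and keeping careful track of the powers of $T$ produced at each step (using $0<T<1$ to absorb favorable powers and Lemma \ref{lem_short1} to convert the short-time localization into the factor $T^{1/4}$ or $T^{\theta/2}$) yields the stated exponent $\nu = s(\al)-\frac12$ for $\al\in[1,2[$ and the endpoint $\nu=0+$ when $\al=2$.
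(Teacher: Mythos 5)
Your plan reproduces the paper's architecture -- the $L^2$ energy identity for $P_N u$, the power-series expansion of $f$ resummed via \eqref{eq2.2ana}, Bourgain-type estimates with the $\1_{T,R}^{\mathrm{low}}+\1_{T,R}^{\mathrm{high}}$ splitting for non-resonant interactions, and the improved bilinear estimates of Section 3 for the resonant ones -- so the strategy is the right one. But two concrete steps would fail as written. In the resonant regime with comparable frequencies you estimate ``the resulting bilinear factor'' by a \emph{single} application of \eqref{eq_bistri1.1}; the paper instead splits the integral by Cauchy--Schwarz into \emph{two} bilinear blocks, $P_{N_1}u\,\p_x P_N^2 P_{N_3}u$ and $P_{N_2}u\,P_{N_4}u$, and applies \eqref{eq_bistri1.1} to each, so that the product of gains $N_1^{\frac12-\frac\al4}N_2^{\frac12-\frac\al4}$ is available. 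With one application plus Bernstein on the remaining pair the frequency count only closes for $s\ge 1-\frac{\al}{8}$, not $s\ge 1-\frac\al4$. The double application is also what governs the $T$-powers: each use of \eqref{eq_bistri1.1} \emph{costs} $T^{-\frac14}$ (and \eqref{eq_bistri2} costs $T^{\frac{\theta-1}{2}}$), and these negative powers are overcome because two of the four factors are measured in $L^2_T$, each contributing $T^{\frac12}$. Lemma \ref{lem_short1} never appears in the energy estimate (it is internal to the proof of Proposition \ref{prop_short4}), so your closing sentence misidentifies the mechanism that produces $T^{\nu}$.

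Second, in the hardest configuration $N_1\sim N_2\sim N\gg N_3$ with $N_3\lesssim k N_4$ (the paper's Case 3 of $A_1$; note this is not the all-comparable case you single out, which needs no commutator structure at all), your assignment of tools to the two pieces of the symbol of $\Pi$ is inverted, and one of the assignments does not close. The paper Taylor-expands $A_1(\xi_1,\xi_2)=\phi_N^2(\xi_1)\xi_1+\phi_N^2(\xi_2)\xi_2$ at $\xi_1=-\xi_2$, obtaining $a_1(\xi_2)(\xi_1+\xi_2)+a_2(\xi_1,\xi_2)(\xi_1+\xi_2)^2/N$. It is the \emph{remainder} $a_2$ -- carrying the crucial gain $(\xi_1+\xi_2)^2/N\sim M^2/N$ with $M\lesssim kN_4$ -- that requires the symbol class of \eqref{def_lambda} and is estimated by two applications of \eqref{eq_bistri1.1}; the \emph{principal} part $a_1(\xi_2)(\xi_1+\xi_2)$ never needs that generality, since $a_1$ depends on $\xi_2$ alone and acts as $\F_x^{-1}(a_1)\ast u$ on one factor. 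Moreover, the principal part is \emph{not} finished by ``the simpler Lemma \ref{lem_comm1} bound'': after the integration by parts $\xi_1+\xi_2=-\sum_{i\ge3}\xi_i$, placing the transferred derivative in $L^\I_{t,x}$ loses a full power of $N_3$, and summing over $N_3\ll N$ leaves an unbounded factor of order $N^{\frac32-s}$ for $s$ near $\frac12$; one must still apply \eqref{eq_bistri2} with $\theta>0$ \emph{twice}, to the separated pairs $(N_1,N_3)$ and $(N_2,N_4)$. You do list \eqref{eq_bistri2} with $\theta>0$ among your tools, but as an alternative case rather than as the indispensable second half of the treatment of the $a_1$-piece; that link is the missing idea.
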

\begin{rem}
The strategy to show  estimate \eqref{P} (and also \eqref{eq_difdif} on the difference) is two-fold. For the non-resonant cases, we use Bourgain type estimates, which is almost identical to \cite{MT22}.
On the other hand, for the resonant cases, we apply the refined bilinear Strichartz estimates \eqref{eq_bistri1.1} and \eqref{eq_bistri2}  instead of the refined Strichartz estimates which were used in \cite{MT22}.
It is worth noticing that  \eqref{eq_bistri1.1}  does not improve linear estimates when $\al=1$ (see Remark \ref{rem_compari}).
This explains why our main result (Theorem \ref{theo1}) coincides with our previous result [\cite{MT22}, Theorem 1.1] when $\al=1$.
\end{rem}

\begin{proof} First we notice that according to Lemma \ref{lem1} it holds
$ u\in Z_{\om,T}^{s}$.
  By using \eqref{eq1}, we have
  \EQQS{
    \frac{d}{dt}\|P_N u(t,\cdot)\|_{L_x^2}^2
    =-2\int_\T P_N\p_x(f(u))P_N udx.
  }
 Fixing $ t\in ]0,T[ $,  integration in time between $0$ and $t$, multiplication by $ \om_N^2 (1\vee N)^{2s} $ and summation over $N$ yield
  \EQS{\label{PP}
  \|u(t)\|_{H_\om^s}^2
  \le \|u_0\|_{H_\om^s}^2
    +2\sum_{N\ge 1} \om_N^2 N^{2s} \bigg|\int_0^t\int_\T (f(u)-f(0)) P_N^2\p_x u dxdt'\bigg|
  }
 since $P_0\p_x u=0$.
 Now we rewrite $f(u)-f(0) $ as $ \sum_{k\ge 1} \frac{f^{(k)}(0)}{k!} u^k $
   and we notice that for any fixed $ N\in 2^{\N}  $,
    \EQS{\label{eq4.4F}
      \int_0^t\int_\T (f(u)-f(0)) P_N^2\p_x u\,  dxdt'\
   =  \sum_{k\ge 1} \frac{f^{(k)}(0)}{k!}\int_0^t\int_\T u^k P_N^2\p_x u \, dxdt' \; .
   }
   Indeed
   \EQQS{
    \begin{aligned}
       \sum_{k\ge 1} \frac{|f^{(k)}(0)|}{k!}\int_0^t\int_\T |u^k P_N^2\p_x u| \, dxdt'
       & \lesssim N  \sum_{k\ge 1} \frac{|f^{(k)}(0)|}{k!}\int_0^t \|u^k\|_{L^2_x} \|u\|_{L^2_x} dt' \\
       & \lesssim N  \sum_{k\ge 1} \frac{|f^{(k)}(0)|}{k!}\int_0^t \|u\|_{L^\I_x}^{k-1} \|u\|_{L^2_x}^2 dt' \\
     &  \lesssim N T G(\|u\|_{L_{T,x}^\I}) \|u\|_{L_T^\I L_x^2}^2<\infty ,
    \end{aligned}
  }
  that proves \eqref{eq4.4F} by Fubini-Lebesgue's theorem. \eqref{eq4.4F}
 together with Fubini-Tonelli's theorem then ensure that
    \EQS{\label{eq4.4FF}
  \sum_{N\ge 1} \om_N^2 N^{2s} \bigg|\int_0^t\int_\T (f(u)-f(0)) P_N^2\p_x
u dxdt'\bigg|
   \le \sum_{k\ge 1}  \frac{|f^{(k)}(0)|}{k!} I_{k}^t
   }
   where
  \EQQS{
    I^t_{k} := \sum_{N\ge 1} \om_N^2
     N^{2s} \bigg|\int_0^t\int_\T u^k P_N^2\p_x u \, dxdt'\bigg|.
  }
  By integration by parts it is easy to check that $ I_1^t =0 $.
  We set
  \EQS{\label{defC0}
    C_0:= \|u\|_{Z_T^{s(\al)}}+\|u\|_{Z_T^{s_0}}  \quad \text{with} \quad s_0\in ]1/2,s] \;.
  }
Let us now  prove that for any $ k\ge 1$ it holds
  \EQS{\label{eq4.4FFF}
  I_{k+1}^t \le C^k T^{\frac 14} G(C_0) C_0^k
    (\|u\|_{X_{T}^{s-1,1}}+\|u\|_{L_T^\I H_x^s})
    \|u\|_{L_T^\I H_x^s},
   }
  which  clearly  leads to \eqref{P}, taking \eqref{PP} and  \eqref{eq4.4FF} into account since  $\sum_{k\ge 1}  \frac{|f^{(k+1)}(0)|}{(k+1)!} C^k C_0^k <\infty$.

 In the sequel we  fix $ k\ge 1$.
 For simplicity, for any positive numbers $a$ and $b$, the notation $a\lesssim_k b$ means there exists a positive constant $C>0$ independent of $k$ such that
 \EQS{\label{eq_lesssim}
   a\le C^k b.
 }
  Remark that $a\le k^m b$ for $m\in\N$ can be expressed by $a\lesssim_k b$ too since an elementary calculation shows $k^m\le m! e^k$ for $m\in\N$.
  Here, $e$ is Napier's constant.
  The contribution of the sum over  $N\lesssim 1 $ in  $ I^t_{k+1} $  is easily estimated by
  \EQS{\label{eq4.4}
    \begin{aligned}
     &  \sum_{N\lesssim 1} N^{2s}
      \bigg|\int_0^t\int_\T  u^{k+1}  P_N^2\p_x u dxdt'\bigg|\\
     &\le  T\sum_{N\lesssim 1}\|u\|_{L_{T,x}^\I}^k \|u\|_{L_T^\I L_x^2}\|P_N^2 u\|_{L_T^\I L_x^2}
     \lesssim_k    T C_0^k \|u\|_{L_T^\I H^s_\om}^2.
    \end{aligned}
  }
It thus remains to bound the contribution of the sum over $N \gg 1  $ in $ I^t_{k+1} $.
  Putting
  \EQQS{
    A(\xi_1,\dots,\xi_{k+2})
    &:=\sum_{j=1}^{k+2}\phi_N^2(\xi_j)\xi_j,\\
    A_1(\xi_1,\xi_2)
    &:=\phi_N^2(\xi_1)\xi_1+\phi_N^2(\xi_2)\xi_2,\\
    A_2(\xi_3,\dots,\xi_{k+2})
    &:=\sum_{j=3}^{k+2}\phi_N^2(\xi_j)\xi_j,
  }
  so that $A(\xi_1,\dots,\xi_{k+2})=A_1(\xi_1,\xi_2)+A_2(\xi_3,\dots,\xi_{k+2})$.
  We see from the symmetry that
  \EQS{\label{eq_4.8}
  \begin{aligned}
    &\int_\T u^{k+1}P_N^2\p_x u dx\\
    &=\frac{i}{k+2}\sum_{\xi_1+\cdots+\xi_{k+2}=0}A(\xi_1,\dots,\xi_{k+2})\prod_{j=1}^{k+2}\ha{u}(\xi_j)\\
    &=\frac{i}{k+2}\sum_{N_1,\dots,N_{k+2}}\sum_{\xi_1+\cdots+\xi_{k+2}=0}A(\xi_1,\dots,\xi_{k+2})\prod_{j=1}^{k+2}\phi_{N_j}(\xi_j)\ha{u}(\xi_j).
  \end{aligned}
  }
  By symmetry  we can assume  that $ N_1\ge  N_2 \ge N_3 $ if $ k=1$,  $N_1\ge N_2 \ge N_3\ge N_4=\max_{j\ge 4}N_j  $ if $ k\ge 2 $.
  We notice that
   the cost of this choice is a constant factor  less than $(k+2)^5 $.
   It is also worth noticing that the frequency projection operator $ P_N
$ ensures that the contribution of any $ N_1\le  N/4 $ does cancel. We thus can assume that $ N_1 \ge N/4 $ and that $ N_2\gtrsim N_1/k $ with $ N_2\ge 1$.

  First, we consider the contribution of $A_2$.
  It suffices to consider the contribution of $(\phi_N(\xi_3))^2\xi_3$ since the contributions of
   $(\phi_N(\xi_j))^2\xi_j$  for $ j\ge 4 $ are clearly  simplest.
  Note that $N_3 \sim  N $ in this case.
  By the Bernstein inequality, we have
  \EQS{\label{eq4.9}
    \sum_{K}\|P_{K}u\|_{L_{T,x}^\I}
    \lesssim\sum_{K}(1\vee K)^{0-}\|u\|_{L_{T}^\I H_{x}^{s_0}}
    \lesssim\|u\|_{L_{T}^\I H_{x}^{s_0}}\lesssim C_0,
  }
  where $0-$ denotes a number slightly less than $0$ (see Subsection \ref{sub_notation}).
  We divide the contribution of $(\phi_N(\xi_3))^2\xi_3$ into two cases:
 1. $N_3\gg kN_4$ or $k=1$ and 2. $N_3\lesssim kN_4$.
  Set
  \EQQS{
  J_{t}^{A_2}
  :=\sum_{N\gg 1}\sum_{N_1,\dots,N_{k+2}} \omega_N^2 N^{2s}
    \bigg|\int_0^t\int_\T\p_x P_N^2 P_{N_3}u\prod_{j=1,j\neq3}^{k+2}P_{N_j}udxdt'\bigg|.
  }
  Note that $N\gg 1$ ensures that $N_3\gg 1$.

  \noindent
  \textbf{Case 1: $N_3\gg kN_4$ or $k=1$.}
  By impossible frequency interactions, we obtain $N_1\sim N_2$.
  In this case we make use of Lemma \ref{lem_res1} to close our estimate.
  For that purpose, we first take the extensions $\check{u}=\rho_T(u)$ of $u$ defined in \eqref{def_ext}.
  With a slight abuse of notation, we define the following functional:
  \EQS{\label{def_J2}
    J_{\I}^{A_2}(u_1,\cdots,u_{k+2})
    :=\sum_{N\gg 1}\sum_{N_1,\dots,N_{k+2}} \omega_N^2 N^{2s}
      \bigg|\int_\R\int_\T
      \p_x P_N^2 u_3\prod_{j=1,j\neq3}^{k+2}u_{j}dxdt'\bigg|.
  }
  Setting $R=N_1^{\frac{1}{3}}N_3^{\frac{4}{3}}$, we decompose $J_{t}^{A_2}$ as
  \EQQS{
    J_{t}^{A_2}
    &\le J_{\I}^{A_2}
    (P_{N_1}\1_{t,R}^{\textrm{high}}\check{u}, P_{N_2}\1_t \check{u},P_{N_3}\check{u},\cdots,P_{N_{k+2}}\check{u})\\
    &\quad+J_{\I}^{A_2}
    (P_{N_1}\1_{t,R}^{\textrm{low}}\check{u}, P_{N_2}\1_{t,R}^{\textrm{high}} \check{u},P_{N_3}\check{u},\cdots,P_{N_{k+2}}\check{u})\\
    &\quad+J_{\I}^{A_2}
    (P_{N_1}\1_{t,R}^{\textrm{low}}\check{u}, P_{N_2}\1_{t,R}^{\textrm{low}} \check{u},P_{N_3}\check{u},\cdots,P_{N_{k+2}}\check{u})\\
    &=:J_{\I,1}^{A_2}+J_{\I,2}^{A_2}+J_{\I,3}^{A_2}.
  }
  For $J_{\I,1}^{A_2}$, we see from \eqref{eq4.1} that
  $\|\1_{t,R}^{\textrm{high}}\|_{L^1}\lesssim T^{\frac 14}N_1^{-\frac 14}N_3^{-1}$, which together with \eqref{eq2.1single} gives
  \EQQS{
      J_{\I,1}^{A_2}
      &\lesssim\sum_{N_1,\dots,N_{k+2}}\om_{N_1}  \om_{N_2}
        N_3^{2s+1}\|\1_{t,R}^{\textrm{high}}\|_{L_t^1}
        \|P_{N_1}\check{u}\|_{L_t^\I L_x^2}\|P_{N_2}\check{u}\|_{L_t^\I L_x^2}
        \prod_{j=3}^{k+2}\|P_{N_j}\check{u}\|_{L_{t,x}^\I}\\
      &\lesssim_k T^{\frac 14}\|\check{u}\|_{L_t^\I H_x^{\frac{1}{2}+}}^k\|\check{u}\|_{L_t^\I H_\om^{s}}^2\sum_{N_1} N_1^{-\frac 14}
      \lesssim_k T^{\frac 14} C_0^{k} \|u\|_{L_T^\I H_\om^{s}}^2
  }
  since $N_1\ge N_2\ge N_3$.
  Here, we used \eqref{eq4.9} for $P_{N_j}\check{u}$, $j=3,\dots,k+2$.
  A similar argument with \eqref{eq4.2} yields the same bound for $J_{\I,2}^{A_2}$ as that of $J_{\I,1}^{A_2}$.
  For $J_{\I,3}^{A_2}$, we see from Lemma \ref{lem_res1} that $|\Om_{k+1}|\gtrsim N_3N_1^\al\gg R$ since $N_3\gg1$.
  Defining $L:=N_3N_1^\al$, we split $J_{\I,3}^{A_2}$ into $k+2$ parts:
  \EQQS{
    J_{\I,3}^{A_2}
    &\le J_{\I}^{A_2}(P_{N_1}Q_{\gtrsim L}(\1_{t,R}^{\textrm{low}}\check{u}),
     P_{N_2}\1_{t,R}^{\textrm{low}} \check{u},P_{N_3}\check{u},\cdots,P_{N_{k+2}}\check{u})\\
    &\quad+J_{\I}^{A_2}(P_{N_1}Q_{\ll L}(\1_{t,R}^{\textrm{low}}\check{u}),
     P_{N_2}Q_{\gtrsim L}(\1_{t,R}^{\textrm{low}} \check{u}),P_{N_3}\check{u},\cdots,P_{N_{k+2}}\check{u})\\
    &\quad+J_{\I}^{A_2}(P_{N_1}Q_{\ll L}(\1_{t,R}^{\textrm{low}}\check{u}),
     P_{N_2}Q_{\ll L}(\1_{t,R}^{\textrm{low}} \check{u}),P_{N_3}Q_{\gtrsim L}\check{u},\cdots,P_{N_{k+2}}\check{u})+\cdots\\
    &\quad
     +J_{\I}^{A_2}(P_{N_1}Q_{\ll L}(\1_{t,R}^{\textrm{low}}\check{u}),
    P_{N_2}Q_{\ll L}(\1_{t,R}^{\textrm{low}} \check{u}),P_{N_3}Q_{\ll L}\check{u},\cdots,P_{N_{k+2}}Q_{\gtrsim L}\check{u})\\
    &=:J_{\I,3,1}^{A_2}+\cdots+J_{\I,3,k+2}^{A_2}.
  }
  We also see from \eqref{eq4.1} that for $K\ge 1$
  \EQS{\label{eq4.7}
    \begin{aligned}
      \|P_{K}\1_{t,R}^{\textrm{low}}\check{u}\|_{L_{t,x}^2}
      &\le\|P_{K}\1_{t}\check{u}\|_{L_{t,x}^2}
        +\|P_{K}\1_{t,R}^{\textrm{high}}\check{u}\|_{L_{t,x}^2}\\
      &\lesssim \|P_{K}\1_{t}\check{u}\|_{L_{t,x}^2}
        +T^{\frac 14}R^{-\frac 14}\|P_{K}\check{u}\|_{L_{t}^\I L_x^2}.
    \end{aligned}
  }
  For $J_{\I,3,1}^{(2)}$, Lemma \ref{extensionlem}, the H\"older inequality, \eqref{eq4.6} and \eqref{eq4.7} imply that
  \EQQS{
      J_{\I,3,1}^{A_2}
      &\lesssim \sum_{N_1,\dots,N_{k+2}} \om_{N_1}  \om_{N_2}   N_3^{2s+1}
        \|P_{N_1}Q_{\gtrsim L} (\1_{t,R}^{\textrm{low}}\check{u})\|_{L_{t,x}^2}
        \|P_{N_2}\1_{t,R}^{\textrm{low}}\check{u}\|_{L_{t,x}^2}
        \prod_{j=3}^{k+2}\|P_{N_j}\check{u}\|_{L_{t,x}^\I}\\
      &\lesssim_k \|\check{u}\|_{L_t^\I H_x^{\frac{1}{2}+}}^k
        \sum_{N_1\gtrsim1} N_1^{2s-\al}
        \|P_{N_1} \check{u}\|_{X^{0,1}_\om}
          \|P_{\sim N_1} \1_t \check{u}\|_{L_{t}^2 H_\om^0}\\
      &\quad+T^{\frac 14}\|\check{u}\|_{L_t^\I H_x^{\frac{1}{2}+}}^{k-1}
        \sum_{N_1\gtrsim N_3} N_1^{-\al-\frac{1}{12}}N_3^{2s-\frac{1}{3}}\|P_{N_1} \check{u}\|_{X^{0,1}_\om}
          \|P_{\sim N_1} \check{u}\|_{L_{t}^2 H_\om^0}
          \|P_{N_3}\check{u}\|_{L_{t,x}^\I}\\
      &\lesssim_k T^{\frac 14}
        \|\check{u}\|_{L_t^\I H_x^{\frac{1}{2}+}}^k
        \|\check{u}\|_{L_t^\I H_\om^s}\|\check{u}\|_{X^{s-1,1}_\om}
        \lesssim_k T^{\frac 14} C_0^k
          \|u\|_{L_T^\I H_\om^s}\|u\|_{Z_{\om,T}^{s}}.
  }
  Here we used $\al\ge 1$ so that $N_1^{-\al}\le N_1^{-1}$.
  By the same way, it is easy to check that
  \EQQS{
    J_{\I,3,2}^{A_2}\lesssim_k T^{\frac 14} C_0^k
      \|u\|_{L_T^\I H_\om^s}\|u\|_{Z_{\om,T}^{s}}.
  }
  Next, we consider the contribution $J_{\I,3,3}^{A_2}$.
  Lemma \ref{extensionlem}, the H\"older inequality, the Bernstein inequality, \eqref{eq4.1.1} and \eqref{eq4.3} show
  \EQQS{
      J_{\I,3,3}^{A_2}
      &\lesssim\sum_{N_1,\dots,N_{k+2}} \om_{N_1}  \om_{N_2}  \, N_3^{2s+1}
        \|P_{N_1}Q_{\ll L}
        (\1_{t,R}^{\textrm{low}} \check{u})\|_{L_{t,x}^2}
        \|P_{N_2}Q_{\ll L}(\1_{t,R}^{\textrm{low}}\check{u})\|_{L_{t}^\I L_x^2}\\
      &\hspace*{20mm}\times\|P_{N_3}Q_{\gtrsim L}
        \check{u}\|_{L_{t}^2 L_x^\I}
        \prod_{j=4}^{k+2}\|P_{N_j}\check{u}\|_{L_{t,x}^\I}\\
      &\lesssim_k T^{\frac 12} \|\check{u}\|_{L_{t}^\I H_x^{\frac 12+}}^{k-1}
        \sum_{N_1\gtrsim N_3\ge 1} N_3^{2s}N_1^{-\al}
        \|P_{N_1}\check{u}\|_{L_{t}^\I H_\om^0}
        \|P_{\sim N_1}\check{u}\|_{L_{t}^\I H_\om^0}
        \|P_{N_3}\check{u}\|_{X^{\frac 12,1}}\\
      &\lesssim_k T^{\frac 12} C_0^{k-1}
        \sum_{N_1\gtrsim N_3\ge 1}
        N_1^{\frac{2-3\al}{4}}N_3^{2s}
        \|P_{N_1}\check{u}\|_{L_{t}^\I H_\om^0}
        \|P_{\sim N_1}\check{u}\|_{L_{t}^\I H_\om^0}
        \|P_{N_3}\check{u}\|_{X^{s(\al)-1,1}}\\
      &\lesssim_k T^{\frac 12} C_0^{k-1}
        \|\check{u}\|_{X^{s(\al)-1,1}}\|\check{u}\|_{L_t^\I H_\om^s}^2
      \lesssim_k T^{\frac 12} C_0^{k} \|u\|_{L_T^\I H_\om^s}^2
    }
    since $2-3\al<0$.
    In a similar manner, we can evaluate the contribution $J_{\I,3,j}^{A_2}$ for $j=4,\dots,k+2$ by the same bound as $J_{\I,3,3}^{A_2}$.

    \noindent
    \textbf{Case 2: $N_3\lesssim kN_4$.}
    Note that $N_4\ge 1$ since $N_3\gg 1$.
    We use Proposition \ref{prop_bistri} with $ a\equiv 1 $
    for $P_{N_2}uP_{N_4}u$ and $P_{N_1}u \p_x P_N^2 P_{N_3} u$.
    In this case we can share the lost derivative on four functions.
    For simplicity, we put
    \EQS{\label{def_U}
      U_{\ti{\om}, K}^{s,p}
      :=\|P_K u\|_{L_{T}^p H_{\ti{\om}}^s}
       +\|P_K (f(u)-f(0))\|_{L_{T}^p H_{\ti{\om}}^s}
    }
    for $s\ge 0$, $2\le p\le \I$, $\{\ti{\om}_N\}_N$ is an acceptable frequency weight.
    Recall that $H_{\ti{\om}}^s(\T)=H^s(\T)$ when $\ti{\om}_N\equiv 1$.
        By \eqref{eq4.9}, Proposition \ref{prop_bistri} with Remark \ref{rem_f},
        Young's inequality, \eqref{def_U} and \eqref{eq2.2ana}, we can see that
    \EQQS{
      J_t^{A_2}
      &\lesssim_k C_0^{k-2} \sum_{N\gg1}
       \sum_{N_1\ge N_2\gg N_3\ge N_4, \atop  N_3\lesssim kN_4}
       \omega_{N}^2  N^{2s}
       \|P_{N_1}u \p_x P_N^2 P_{N_3} u\|_{L_{T,x}^2}
       \|P_{N_2}u P_{N_4}u\|_{L_{T,x}^2}\\
      &\lesssim_k C_0^{k-2} T^{-\frac 12}
       \sum_{N_1\ge  N_2\gg N_3\ge N_4, \atop  N_3\lesssim kN_4}  \om_{N_3}^2 N_1^{\frac12-\frac\al4} N_2^{\frac12-\frac\al4}
        N_3^{2s+1}
        U_{1,N_1}^{0,2} U_{1,N_3}^{0,\I}
        U_{1,N_2}^{0,2} U_{1,N_4}^{0,\I}\\
      &\lesssim_k k^{\frac{3}{2}} C_0^{k} G(C_0) T^{-\frac 12} \|u\|_{L_T^\I H_x^{s(\al)}}
        \sum_{N_1\ge N_2}
        \Bigl(\frac{N_2}{N_1} \Bigr)^{s-s(\al)+\frac12}
        U_{\om, N_1}^{s,2} U_{\om, N_2}^{s,2}\\
      &   \lesssim_k T^{\frac 12} C_0^k G(C_0) \|u\|_{L^\infty_T H^s_\omega}^2.
      }
    Here, in the first inequality, we put $L_{T,x}^\I$ norm on $P_{N_j}u$ for $j=5,\dots,k+2$, and used \eqref{eq4.9}.
    Recall $s(\al)=1-\frac \al4$.

      Next, we consider the contribution of $A_1$.
      With a slight abuse of notation, put
      \EQQS{
        J_t^{A_1}
        :=\sum_{N\gg 1}\sum_{N_1,\dots,N_{k+2}}
          \omega_N^2 N^{2s}\bigg|\int_0^t \int_\T
          \Pi(P_{N_1}u,P_{N_2}u)
          \prod_{j=3}^{k+2} P_{N_j}u dxdt' \bigg|,
      }
      where $\Pi(u,v)$ is defined in \eqref{def_pi}.
      Note that $ P_0\Pi(P_{N_1}u,P_{N_2}u)=0 $.
      As in the estimate on the contribution of $A_2$, we divide the study of $J_t^{A_1}$ into three cases:
      1. $N_3\gg kN_4$ or $k=1$, 2. $N_2\lesssim N_3\lesssim kN_4$ and 3. $N_2\gg N_3$ and $N_3\lesssim kN_4$.
      Note that $N\gg 1$ ensures that $N_1\gg 1$.

      \noindent
      \textbf{Case 1: $N_3\gg kN_4$ or $k=1$.}
      We can use the argument of Case 1 for $A_2$ combining with  Lemma \ref{lem_comm1}.
      Since we only use Bourgain type estimates in this configuration, we do not improve the result in \cite{MT22} here. We omit the proof  since the complete proof can be found in Case 2 of the proof of Proposition 4.8 in \cite{MT22}.

      \noindent
      \textbf{Case 2: $N_2\lesssim N_3\lesssim kN_4$.}
      In this case, we have $N_3 \gtrsim N_1/k $, so that exactly the same proof as  in the Case 3 for $A_2$ is applicable to this case (and we do not need Lemma \ref{lem_comm1}).

      \noindent
      \textbf{Case 3: $N_2\gg N_3$ and $N_3\lesssim kN_4$.} In this case it holds  $ N_1\sim N_2\sim N \gg N_3 $.
      We also note that $N_3,N_4\ge 1$ since $J_t^{A_1}=0$ otherwise.
      We would like to use Lemma \ref{lem_comm1} that corresponds to integration by parts.
      The problem we meet here is how to combine Lemma \ref{lem_comm1} for $\Pi(P_{N_1}u,P_{N_2}u)$ with the refined bilinear Strichartz estimate (Proposition \ref{prop_bistri}) for $P_{N_1}uP_{N_3}u$.

      Therefore, we consider $A_1$ in more detail.
      The Taylor theorem implies that for any $ (\xi_1,\xi_2)\in \R^2 $ with $ |\xi_1|\sim |\xi_2|\sim N  $ there exists $\theta
      =\theta(\xi_1,\xi_2) \in\R$ such that $|\theta|\sim N$ and
      \EQQS{
        \phi_N^2(\xi_1)\xi_1
        =\phi_N^2(-\xi_2)(-\xi_2)
         +(\xi_1+\xi_2)(\phi_N^2(x)x)'(-\xi_2)
         +\frac{1}{2}(\xi_1+\xi_2)^2(\phi_N^2(x)x)''(\theta).
      }
               Since $\phi$ and $\phi''$ are even and $\phi'$ is odd, we have
      \EQQS{
        A_1(\xi_1,\xi_2)=\phi_N^2(\xi_1)\xi_1+\phi_N^2(\xi_2)\xi_2
        &=a_1(\xi_2)(\xi_1+\xi_2)+a_2(\xi_1,\xi_2)\frac{(\xi_1+\xi_2)^2}{N},
      }
      where
      \EQQS{
        a_1(\xi_2)
        &=\phi_N^2(\xi_2)+2\phi_N(\xi_2)\phi_N'(\xi_2)
          \frac{\xi_2}{N},\\
        a_2(\xi_1,\xi_2)
        &=2\phi_N(\theta(\xi_1,\xi_2)) \phi_N'(\theta(\xi_1,\xi_2))
         +(\phi_N'(\theta(\xi_1,\xi_2)))^2
          \frac{\theta(\xi_1,\xi_2)}{N}\\
        &\quad+ \phi_N(\theta(\xi_1,\xi_2)) \phi_N''(\theta(\xi_1,\xi_2))\frac{\theta(\xi_1,\xi_2)}{N}.
      }
      Note that the above identities forces $ a_2(\cdot,\cdot) $ to be measurable and  that $\|a_1\|_{L^\I(\R)}+\|a_2\|_{L^\I(\R^2)}\lesssim 1$.
      The advantage of this decomposition  of $A_1(\cdot,\cdot)$ is that $a_1 $ only depends on the $ \xi_2$-variable whereas we gain a factor $ \frac{\xi_1+\xi_2}{N} $ on the contribution of $ a_2(\cdot,\cdot) $ with respect to the one of $ A_1 (\cdot,\cdot) $.

By integration by parts the  term involving $ a_1$ can be rewritten as
  \EQQS{
    &\sum_{\xi_1+\cdots+\xi_{k+2}=0}
     \phi_{N_1}(\xi_1)\ha{u}(\xi_1)
     a_1(\xi_2)\phi_{N_2}(\xi_1)\ha{u}(\xi_2) (\xi_1+\xi_2) \prod_{j=3}^{k+2}\phi_{N_j}(\xi_j)\ha{u}(\xi_j)\\
    & =-\sum_{\xi_1+\cdots+\xi_{k+2}=0} \phi_{N_1}(\xi_1)\ha{u}(\xi_1)
     a_1(\xi_2)\phi_{N_2}(\xi_1)\ha{u}(\xi_2) \bigg(\sum_{i=3}^{k+2} \xi_i\bigg)  \prod_{j=3}^{k+2}\phi_{N_j}(\xi_j)\ha{u}(\xi_j) \; .
  }
To estimate  its contribution, that we will call $J_t^{a_1}$,   we make use of Proposition \ref{prop_bistri2}. For instance for the most dangerous term (with the derivative on $ u_3 $) we use    Proposition \ref{prop_bistri2} with Remark \ref{rem_f} and \eqref{eq2.2ana} on
 $\|P_{N_1} u P_{N_3} \partial_x u\|_{L^2_{T,x}} $
 and $\|P_{N_2}(\F_x^{-1}(a_1)* u) P_{N_4} u \|_{L^2_{T,x}} $
 with the trivial inequality $ \| {\mathcal F}^{-1}_x(a_1)\ast  v \|_{L^2} \lesssim \|v\|_{L^2} $, for any $ v\in L^2(\T) $, to get
 \EQQS{
  J_t^{a_1}
  &\lesssim_k C_0^{k-2} \sum_{N\gg1}\sum_{N_1\sim N_2\gg N_3\ge N_4,\atop N\sim N_1, N_3\lesssim kN_4} \om_N^2 N^{2s}
   \|P_{N_1} u P_{N_3} \partial_x u\|_{L^2_{T,x}}
   \|P_{N_2}(\check{a}_1* u) P_{N_4} u \|_{L^2_{T,x}}\\
  &\lesssim C_0^{k-2} T^{\theta-1} \sum_{N_1\sim N_2\ge N_3\ge N_4, \atop N_3\lesssim kN_4}
    \om_{N_1}^2 N_1^{2s} N_3^{\frac \theta2+1}
    N_4^{\frac \theta2} U_{1,N_1}^{0,2}U_{1,N_3}^{0,\I}
    U_{1,N_2}^{0,2}U_{1,N_4}^{0,\I}\\
  &\lesssim k^{\frac 12} C_0^{k-2} T^{\theta-1} \sum_{N_1,N_3,N_4} N_3^{-\frac \theta2}
   N_4^{-\frac \theta2} U_{\om,N_1}^{s,2}U_{1,N_3}^{\frac 12+\theta,\I}
   U_{\om,\sim N_1}^{s,2}U_{1,N_4}^{\frac 12+\theta,\I}\\
  &\lesssim T^{\theta} C_0^{k} G(C_0) \|u\|_{L_T^\I H_\om^s}^2,
 }
 where $\check{a}_1$ is the inverse Fourier transform of $a_1$ (i.e., $\F_x^{-1}(a_1)$) and $\theta\in]0,1]$.
 It is important to have a positive $\theta>0$ in order to close the estimate above.
 When $\al\in[1,2[$, we can choose $\theta=s(\al)-1/2$.
 On the other hand, when $\al=2$, we choose $\theta=\min(s_0-1/2,1)$\footnote{See \eqref{defC0} for the definition of $s_0$.}.
 Here, we used the notation $U_{\tilde{\om},N}^{s,p}$ defined in \eqref{def_U}.

 Therefore it remains to evaluate  the contribution $ J^{A_2}_t $ of $a_2(\cdot,\cdot)$, which we denote by $J^{a_2}_t$. It is to evaluate this contribution  that we need to prove   \eqref{eq_bistri1.1} with a Fourier multiplier.
 We decompose further in $ |\xi_1+\xi_2|\sim M \ge 1 $.
 Noticing that  $ N_3\lesssim k N_4  $ forces  $ M\lesssim k N_4 $,
 Proposition \ref{prop_bistri} with Remark \ref{rem_f}, \eqref{def_U}, the Young inequality and \eqref{eq2.2ana} lead to
    \EQQS{
    &J_t^{a_2}\\
    &\lesssim   \sum_{N_1\sim N_2\sim N,N_3, \dots, N_{k+2}\atop N\gg N_3\ge  N_4,1\le M\lesssim k N_4}
         \omega_{N}^2 N^{2s-1}  \bigg|\int_0^t \int_\T \p_x^2 P_M \Lambda_{a_2}(P_{N_1} u ,P_{N_2} u )
      \prod_{j=3}^{k+2}P_{N_j}u dxdt' \bigg|\\
    &\lesssim_k C_0^{k-2} \sum_{N_1\sim N_2\sim N, \atop N\gg N_3\ge N_4,1\le M\lesssim k N_4}
      \omega_{N}^2 N^{2s-1}
      \|\p_x^2 P_M \Lambda_{a_2}(P_{N_1} u ,P_{N_2} u )\|_{L_{T,x}^2}
        \|P_{N_3}  u P_{N_4} u\|_{L_{T,x}^2}\\
    &\lesssim C_0^{k-2}
     \sum_{N_1\sim N_2\sim N, \atop N\gg N_3\ge N_4, 1\le M\lesssim k N_4}
     T^{-\frac 12}
             \omega_{N}^2 N^{2s-1}M^2
        \| \Lambda_{a_2}(P_{N_1} u ,P_{N_2} u )\|_{L_{T,x}^2}
        \|P_{N_3}  u P_{N_4} u\|_{L_{T,x}^2}\\
    &\lesssim  C_0^{k-2} T^{-\frac 12}  \sum_{N\gg N_3\ge N_4}
     \sum_{1\le M\lesssim k N_4}
       M^2 N^{2s-\frac 12-\frac \al4}  N_3^{\frac 12- \frac \al4}
     U_{1,N}^{0,2} U_{1,\sim N}^{0,\I}
     U_{1,N_3}^{0,2} U_{1,N_4}^{0,\I}\\
    &\lesssim  T^{-\frac 12} k^2 C_0^{k} G(C_0) \|u\|_{L_T^\I H_\om^s}
     \sum_{N\gg N_3}
       \Bigl( \frac{N_3}{N}\Bigr)^{\frac 12+\frac \al4}
       U_{\om,N}^{s,2}U_{1,N_3}^{s(\al),2}\\
    &\lesssim_k T^{\frac 12} C_0^{k} G(C_0)
       \|u\|_{L_T^\I H^s_\om}^2.
    }
    This completes the proof.
  \end{proof}

  \section{Estimate for The Difference}

  We provide the estimate (at the regularity $s-1$) for the difference $w$ of two solutions $u,v$ of \eqref{eq1}. In this section, we do not use the frequency envelope, so we always argue on the standard Sobolev space $H^s(\T)$.

  \begin{prop}\label{difdif}
    Let $0<T<1$, $\al\in[1,2]$ and $2\ge s>1/2 $ with $ s \ge s(\al):=1-\frac \al4 $.
    Let $u$ and $v$ be two solutions of \eqref{eq1} belonging to $Z^s_T$ and associated with the inital data $u_0\in H^s(\T)$ and $v_0\in H^s(\T)$, respectively.
    Then there exists a smooth function $ G=G[f] $ that is
  increasing  and non-negative on $ \R_+ $  such that
    \EQS{\label{eq_difdif}
      \|w\|_{L_T^\I H_x^{s-1}}^2
      \le \|u_0-v_0\|_{H_x^{s-1}}^2 +T^{\nu}G(\|u\|_{Z^s_T}+\|v\|_{Z^s_T})
      \|w\|_{Z^{s-1}_T}
      \|w\|_{L_T^\I H_x^{s-1}},
    }
    where we set $w=u-v$ and $ \nu=\min(s-1/2,1/4)$.

  \end{prop}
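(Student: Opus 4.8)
The plan is to run the argument of Proposition~\ref{prop_apri} at the regularity level $ s-1 $, exploiting that the difference $ w=u-v $ solves the equation, linear in $ w $,
\EQQS{
  \p_t w + L_{\al+1} w + \p_x\bigl(f(u)-f(v)\bigr)=0
}
on $ ]0,T[\times\T $. First I would write the energy identity $ \frac{d}{dt}\|P_N w\|_{L_x^2}^2=-2\int_\T P_N\p_x(f(u)-f(v))\,P_N w\,dx $, integrate in time over $ [0,t] $, multiply by $ (1\vee N)^{2(s-1)} $ and sum over $ N $; since $ P_0\p_x w=0 $ this reduces \eqref{eq_difdif} to controlling
\EQQS{
  \sum_{N\ge1}(1\vee N)^{2(s-1)}\Bigl|\int_0^t\int_\T \bigl(f(u)-f(v)\bigr)\,P_N^2\p_x w\,dx\,dt'\Bigr|.
}

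The second step is to linearize the nonlinearity in $ w $. Writing $ f(u)-f(v)=\sum_{k\ge1}\frac{f^{(k)}(0)}{k!}(u^k-v^k) $ and using the identity $ u^k-v^k=w\sum_{l=0}^{k-1}u^l v^{k-1-l} $, each term becomes a space-time integral of a product of $ k+1 $ factors: two copies of $ w $ (the leading one and the one carrying $ P_N^2\p_x $) and $ k-1 $ factors drawn from $ \{u,v\} $. As in \eqref{eq4.4F}, the interchange of the $ k $-summation with the integral is justified by absolute convergence using Hypothesis~\ref{hyp2} together with \eqref{eq2.2ana}--\eqref{eq2.3ana}, and one is reduced to a single $ k $, its dependence being tracked through the $ \lesssim_k $ convention of Proposition~\ref{prop_apri}.

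I would then decompose the $ k+1 $ factors dyadically and mirror the resonant/non-resonant case split of Proposition~\ref{prop_apri}, now specialized to the fact that the output projection $ P_N^2\p_x $ always acts on a copy of $ w $. Symmetrizing over the two copies of $ w $ turns the relevant symbol into $ A_1(\xi_1,\xi_2)=\phi_N^2(\xi_1)\xi_1+\phi_N^2(\xi_2)\xi_2 $ in the two $ w $-frequencies, which in the worst configuration $ N_1\sim N_2\sim N $ I would split as $ A_1=a_1(\xi_2)(\xi_1+\xi_2)+a_2(\xi_1,\xi_2)\frac{(\xi_1+\xi_2)^2}{N} $ with $ \|a_1\|_{L^\I}+\|a_2\|_{L^\I}\lesssim1 $, exactly as in Case~3 of $ A_1 $ there. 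Non-resonant configurations are treated by the Bourgain-type estimates \eqref{eq4.3}, \eqref{eq4.6}, \eqref{eq4.7} and the resonance lower bound of Lemma~\ref{lem_res1}, which follows the argument of \cite{MT22} and produces a positive power of $ T $. In the resonant configurations I would invoke the improved bilinear Strichartz estimates, applied to the two copies of $ w $: both solve the difference equation with source $ f_w:=f(u)-f(v) $, and the source terms $ \|P_{N_j}f_w\| $ on the right of \eqref{eq_bistri1.1} and \eqref{eq_bistri2} are controlled, via \eqref{eq2.3ana}, in $ L^2 $ and $ H^{s-1} $ by $ G(\|u\|_{Z^s_T}+\|v\|_{Z^s_T}) $ times the corresponding norm of $ w $. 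For the $ a_1 $-part I would integrate by parts to replace $ \xi_1+\xi_2 $ by $ -\sum_{i\ge3}\xi_i $, transferring the derivative onto the low $ u,v $ frequencies, and then apply the improved estimate \eqref{eq_bistri2} with parameter $ 0<\theta\le s-1/2 $ to the resulting unbalanced pairs; for the $ a_2 $-part, where a genuine Fourier multiplier remains, I would use the Fourier-multiplier form \eqref{eq_bistri1.1} --- which is precisely why that estimate was stated with a symbol $ a $. Throughout, one copy of $ w $ is measured in the $ Z^{s-1}_T $ norm and the other in $ L_T^\I H_x^{s-1} $, matching the right-hand side of \eqref{eq_difdif}.

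The main obstacle, as in the a priori estimate, is this worst resonant interaction of comparable high frequencies: one must reconcile the integration by parts (the commutator Lemma~\ref{lem_comm1}) with the bilinear estimate through the $ a_1/a_2 $ splitting, and then verify that, after converting the two $ w $-factors to $ H^{s-1} $ and the remaining $ u,v $-factors to $ L^\I $ via Bernstein's inequality and \eqref{eq4.9}, the resulting dyadic sums are summable by Schur's test precisely when $ s\ge s(\al)=1-\frac{\al}{4} $. Finally, I would keep careful track of the powers of $ T $: the improved estimate \eqref{eq_bistri2} contributes a power governed by the admissible range $ 0<\theta\le s-1/2 $ (the loss $ N_{\textrm{min}}^{\theta/2} $ being absorbed thanks to $ s>1/2 $), while the general estimate \eqref{eq_bistri1.1} contributes the ceiling $ \frac14 $, which together yield the exponent $ \nu=\min(s-\tfrac12,\tfrac14) $ in \eqref{eq_difdif}.
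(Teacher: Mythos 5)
Your overall architecture is indeed the paper's: energy estimate at the $H^{s-1}$ level, the linearization $u^k-v^k=w\sum_{l}u^lv^{k-1-l}$, dyadic decomposition with the symmetrized symbol $A_1$ carried by the two copies of $w$, Bourgain-type estimates off resonance, and the $a_1/a_2$ splitting plus improved bilinear estimates when the two $w$-frequencies satisfy $N_1\sim N_2\sim N$ (this matches the paper's Case 2). But your case analysis misses the configuration that is genuinely new in the difference estimate and has no counterpart in Proposition \ref{prop_apri}: ordering the $w$-frequencies $N_1\ge N_2$ and the $u,v$-frequencies $N_3\ge N_4\ge\cdots$, the case $N_1\sim N_3\sim N\gg N_2\vee N_4$ with $N_2\lesssim kN_4$ (the paper's Case 3). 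In the a priori estimate all factors are copies of $u$, so the two largest frequencies can always be taken to be the two symmetrized ones; here they cannot, since the largest $u,v$-frequency may match the largest $w$-frequency. In this configuration every tool you list fails: the symmetrization gives no cancellation because $P_N^2P_{N_2}=0$ when $N_2\ll N$, so $A_1$ degenerates to the single term $\phi_N^2(\xi_1)\xi_1$ and the full derivative $N\sim N_1$ sits on the high-frequency $w$; the Taylor expansion behind the $a_1/a_2$ splitting requires both arguments of $A_1$ to be of size $\sim N$, which is false here; and Lemma \ref{lem_res1} requires the third and fourth largest frequencies to be separated, which fails precisely when $N_2\sim N_4$, so there is no Bourgain gain either.

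Moreover, applying the bilinear estimates ``to the two copies of $w$'', as you propose, does not close this case: \eqref{eq_bistri2} on the pair $(P_{N_1}w,P_{N_2}w)$ only gains $N_2^{\theta/2}$ with no negative power of $N_1$, and the remaining high-frequency factor $P_{N_3}z_3$ (with $z_3\in\{u,v\}$) must then go in $L^2_{T,x}$ by H\"older while $P_{N_4}z_4$ goes in $L_{T,x}^\I$ at a Bernstein cost $N_4^{1/2}$; near $s=1/2$ the resulting sums over $N_2$ and $N_4$ diverge. The paper's resolution is to apply Proposition \ref{prop_bistri2} \emph{twice}, to the mixed pairs $(\partial_xP_N^2P_{N_1}w,\,P_{N_4}z_4)$ and $(P_{N_3}z_3,\,P_{N_2}w)$, so that both low frequencies contribute gains $N_4^{\theta/2}$ and $N_2^{\theta/2}$; this forces one to run the bilinear estimates with $u,v$ themselves as inputs, their sources $f(u)-f(0)$, $f(v)-f(0)$ being controlled by \eqref{eq2.2ana}, not only the source $f(u)-f(v)$ of $w$ by \eqref{eq2.3ana}. (The same remark applies to the paper's Case 1, $N_4\gtrsim N_1/k$, where \eqref{eq_bistri1.1} must also be applied to the pair $(P_{N_3}z_3,P_{N_4}z_4}$: only the double gain reaches $s(\al)=1-\frac\al4$.) The bookkeeping that makes Case 3 work, and which your proposal never invokes, is that the high-frequency $z$-factor carries $H^s$ regularity while the high-frequency $w$ carries only $H^{s-1}$, so the loss $N^{2s-1}$ splits exactly as $(s-1)+s$ across the two high factors; this is also where $\theta=\min(s-1/2,1/2)$, hence part of $\nu$, comes from. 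A minor additional point: the ceiling $1/4$ in $\nu$ comes from the $T^{1/4}$ of the nonresonant Bourgain estimates, not from \eqref{eq_bistri1.1}, whose $T^{-1/4}$ (respectively $T^{-1/2}$ after two applications) is overcome by converting $L^2_T$ norms into $L_T^\I$ norms, which yields $T^{1/2}$.
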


  \begin{proof}
    According to Lemma \ref{lem1}, we notice that $u,v\in Z_T^s$.
    Observe that $w$ satisfies
    \EQS{\label{eq_w}
      \p_t w+L_{\al+1}w=-\p_x(f(u)-f(v))
    }
   Rewriting $ f(u)-f(v) $ as
   \EQQS{
     f(u)-f(v)=\sum_{k\ge 1} \frac{f^{(k)}(0)}{k!} (u^k-v^k)=\sum_{k\ge 1} \frac{f^{(k)}(0)}{k!}w \sum_{i=0}^{k-1} u^i  v^{k-1-i}
   }
   and arguing as in the proof of Proposition \ref{prop_apri}, we see from \eqref{eq_w} that for $t\in[0,T]$
    \EQQS{
      \|w(t)\|_{H_x^{s-1}}^2
      \le \|w_0\|_{H_x^{s-1}}^2
        +  2\sum_{k\ge 1}    \frac{|f^{(k)}(0)|}{(k-1)!}  \max_{i\in \{0,..,k-1\}} I_{k,i}^t ,
    }
    where $w_0=u_0-v_0$ and
    \EQQS{
      I_{k,i}^t:= \sum_{N\ge 1}  N^{2(s-1)} \bigg|\int_0^t\int_\T  u^i  v^{k-1-i} wP_N^2\p_x w\,  dxdt'\bigg| \; .
    }
    It is clear that $I_{1,i}^t=0$ by the integration by parts.
    Therefore we are reduced to estimating the contribution of
    \EQS{\label{Ik}
    I_{k+1}^t=  \sum_{N\ge 1}  N^{2(s-1)} \bigg|\int_0^t\int_\T  \textbf{z}^k wP_N^2\p_x w\,  dxdt'\bigg| \;
    }
    where $\textbf{z}^k $ stands for $ u^i v^{k-i} $ for some $i\in \{0,..,k\}
  $.  We set
    \EQQS{
      C_0:=\|u\|_{Z^s_T}+\|v\|_{Z^s_T}.
    }
    We claim  that for any $ k\ge 1$ it holds
    \EQS{\label{eq4.4FFFdif}
    I_{k+1}^t \le C^k T^{\frac 14} G(C_0)C_0^k
      \|w\|_{Z^{s-1}_T}
      \|w\|_{L_T^\I H^{s-1}_x},
     }
    which clearly leads to \eqref{P}, taking \eqref{PP} and  \eqref{eq4.4FF} into account since  $\sum_{k\ge 1}  \frac{|f^{(k+1)}(0)|}{k!} C^k C_0^k <\infty$.

   In the sequel we fix $ k\ge 1 $ and we estimate $ I_{k}^t $.
   We also use the notation $a\lesssim_k b$ defined in \eqref{eq_lesssim}.
   The contribution of the sum over  $N\lesssim 1$ in \eqref{Ik}  is easily
  estimated thanks to \eqref{eq2.2} by
    \EQQS{
        &\sum_{N\lesssim1} (1\vee N)^{2(s-1)}\bigg|\int_0^t\int_\T \zz^{k}wP_N^2\p_x w dxdt'\bigg|\\
        &\lesssim T\sum_{N\lesssim 1}\|w\|_{L_T^\I H_x^{s-1}}
        \|\zz^k P_N^2\p_x w\|_{L_T^\I H_x^{1-s}}
        \lesssim_k T \|w\|_{L_T^\I H_x^{s-1}}^2,
    }
    since $s>1/2$. In the last inequality, we used $1-s<1/2$.
    Therefore, in what follows, we can assume that $N\gg 1.$
    A similar argument to \eqref{eq_4.8} yields
    \EQQS{
      &\sum_{N\gg1} N^{2(s-1)}\bigg|\int_0^t\int_\T \zz^{k}wP_N^2\p_x w dxdt'\bigg|\\
      &\le \sum_{N\gg1}\sum_{N_1,\dots,N_{k+2}}N^{2(s-1)}\bigg|\int_0^t\int_\T \Pi(P_{N_1}w,P_{N_2}w)\prod_{j=3}^{k+2}P_{N_j}z_j dxdt'\bigg|
      =:J_t,
    }
    where $\Pi(f,g)$ is defined by \eqref{def_pi} and $ z_i\in \{u,v\} $ for $ i\in\{3,..,k+2\} $.
    By  symmetry, we may assume that $N_1\ge N_2$. Moreover, we may assume that $ N_3\ge N_4 $ for $k=2 $ and  $N_3\ge N_4\ge N_5=\max_{j\ge 5}N_j$ for $ k\ge 3$. Note again that
     the cost of this choice is a constant factor  less than $(k+2)^{5} $. It
  is also worth noticing that the frequency projectors in  $\Pi(\cdot,\cdot)$ ensure that $ N_1\sim N $ or $ N_2\sim N $ and in particular
     $ N_1\gtrsim N $. We also remark that we can assume that $ N_3\ge 1 $ since the contribution of $ N_3=0 $ does vanish by integration by parts. Finally we note that we can also assume that $ N_2\ge 1 $ since in the case $ N_2=0 $ we must have $ N_3 \gtrsim N_1/k $ and it is easy to check that by the Young inequality
  \EQQS{
  J_t & \lesssim k T\|w\|_{L^\infty_T H_x^{s-1}} \|z_3 \|_{L^\infty_T H_x^s}  \|P_0 w\|_{L^\I_{T,x}}
  \prod_{j=4}^{k+2} \|z_j\|_{L^\infty_T H_x^{s}}
  \lesssim_k T \|w\|_{L^\infty_T H_x^{s-1}}^2 \; .
  }

    We consider the following contribution to  $J_t$:
    \begin{itemize}
      \item $N_4\gtrsim N_1/k  $ ($k\ge 2$) ,
      \item $N_1\gg k N_4$ and $N_2\gtrsim N_3$ (or $k=1 $ and $N_2\gtrsim N_3$) ,
      \item $N_1\gg k N_4$ and $N_2\ll N_3$ (or $k=1 $ and $N_2\ll N_3$).
    \end{itemize}

    \noindent
    \textbf{Case 1: $N_4\gtrsim N_1/k  $.}
    Note that $N_3,N_4\ge 1$ since $N_1\gtrsim N\gg1$.
    In a similar manner to \eqref{def_U}, we define
    \EQS{
      \label{def_W}
      W_K^{s,p}
      &:= \|P_{K}w\|_{L_T^p H_x^s}
       +\|P_{K}(f(u)-f(v))\|_{L_T^p H_x^s},\\
      \label{def_Y}
      Y_{N_j}^{s,p}
      &:=\|P_{N_j}z_j\|_{L_T^p H_x^s}
       +\|P_{N_j}(f(z_j)-f(0))\|_{L_T^p H_x^s}
    }
    for $s\in\R$, $2\le p\le \I$, $K\ge 1$ and $j=3,\dots, k+2$.
    Proposition \ref{prop_bistri} together with Remark \ref{rem_f}, \eqref{def_W}, \eqref{def_Y}, \eqref{eq2.3ana} and the Young inequality lead to
    \EQQS{
      J_t
      &\lesssim_k C_0^{k-2}
       \sum_{N, N_2\le N_1 \lesssim k N_4\le k  N_3}
        N^{2s-2} \Bigl(\| \partial_x P_N^2 P_{N_1} w P_{N_2} w \|_{L^2_{T,x}}\\
      &\hspace*{45mm}+\| P_{N_1} w \partial_x P_N^2 P_{N_2} w \|_{L^2_{T,x}}\Bigr)
      \| P_{N_3} z_3 P_{N_4} z_4\|_{L^2_{T,x}} \\
      &\lesssim C_0^{k-2} T^{-\frac 12}
  \sum_{N_2\le N_1 \lesssim k N_4\le k  N_3}
        N_1^{2s-1} N_1^{\frac 12-\frac{\alpha}{4}}
        N_3^{\frac 12-\frac \al4}
        W_{N_1}^{0,2} W_{N_2}^{0,\I}
        Y_{N_3}^{0,2} Y_{N_4}^{0,\I} \\
      & \lesssim_k C_0^{k} T^{-\frac 12}
  \, G(C_0)
       \|w\|_{L_T^\I H_x^{s-1}}
        \sum_{N_1\lesssim k  N_3}
        \Bigr(\frac{N_1}{k N_3}\Bigr)^{\frac 12 -}
        W_{N_1}^{s-1,2} Y_{N_3}^{s(\al),2} \\
      &\lesssim_k T^{\frac 12} C_0^{k}  G(C_0)\|w\|_{L^\infty_T H_x^{s-1}}^2
    }
    since $s>1/2$.

    \noindent
    \textbf{Case 2: $N_1\gg k N_4$ and $N_2\gtrsim N_3$ (or $ k=1 $ and $N_2\gtrsim  N_3$).}
    The contribution of $J_t$ in this case can be estimated by the same way as the contribution of $A_1$ in Proposition \ref{prop_apri},
    replacing $N^{2s}$, $P_{N_1}u$, $P_{N_2}u$, $P_{N_j}u$ for $j=3,\dots,k+2$ by $N^{2(s-1)}$, $P_{N_1}w$, $P_{N_2}w$, $P_{N_j}z_j$ for $j=3,\dots,k+2$, respectively.

    \noindent
    \textbf{Case 3: $N_1\gg k N_4$ and $N_2\ll N_3$ (or $ k=1 $ and $N_2\ll N_3$).}
   Note that in this case $N_1\sim N_3\sim N \gg N_2\vee N_4$.
   We further divide the contribution of $J_t$ into two cases:
    \begin{itemize}
      \item $k N_4\gtrsim  N_2$,
      \item $ k N_4 \ll  N_2 $ or $k=1$.
    \end{itemize}
   However, it suffices to consider the first case $k N_4\gtrsim  N_2$ since the second case is exactly the same as Subcase 3.1 in the proof of Proposition 5.1 in \cite{MT22}.
   Its result requires only $s>1/2$ since we can use Bourgain type estimates in this configuration, which is sufficient for our purpose.
   Now, we treat the case $k N_4\gtrsim N_2$.
   Notice that $N_3,N_4\ge 1$.
   We can apply Proposition \ref{prop_bistri2} two times with $ \theta=\min(s-1/2,1/2) $ (see also Remark \ref{rem_f})  to get
    \EQQS{
      J_t
      &\lesssim_k C_0^{k-2}
       \sum_{N\sim N_1\sim N_3 \ge N_4\gtrsim  N_2/k \atop N_1\gg N_4, N_3\gg N_2}
      N^{2s-2}  \Bigl( \| \partial_x P_N^2 P_{N_1} w P_{N_4} z_4 \|_{L_{T,x}^2}  \| P_{N_3} z_3 P_{N_2} w\|_{L_{T,x}^2}\\
      & \hspace*{45mm}
      + \| P_{N_1} w P_{N_4} z_4 \|_{L_{T,x}^2}
        \| P_{N_3} z_3 \partial_x P_N^2 P_{N_2} w\|_{L_{T,x}^2}\Bigr) \\
      &\lesssim T^{\theta-1} C_0^{k-2}
        \sum_{N_1 \gtrsim N_4\gtrsim  N_2/k}    N_2^{\frac \theta2} N_4^{\frac \theta2} N_1^{2s-1}
        W_{N_1}^{0,2} W_{N_2}^{0,\I}
        Y_{\sim N_1}^{0,2} Y_{N_4}^{0,\I} \\
      &\lesssim_k   T^{\theta-1}\, k^\frac{1+\theta}{2} C_0^{k-1} G(C_0) \|w\|_{L_T^\I H_x^{s-1}}
       \sum_{ N_1 \gtrsim N_4} N_4^{-(s \wedge (2s-1))} N_4^\theta  \,
       W_{N_1}^{s-1,2} Y_{\sim N_1}^{s,2}
       Y_{N_4}^{s,\I}. \\
       & \lesssim_k  T^{\theta} C_0^{k} G(C_0)\|w\|_{L^\infty_T  H_x^{s-1}}^2
    }
  since $ s>1/2$ and $ \theta=\min(s-1/2,1/2) $.
   Here, we used the Cauchy-Schwarz inequality to sum on $ N_1$.
   Note that the above estimate only requires $s>1/2$.
  \end{proof}

  \section{Local and Global Well-Posedness Results}
  \subsection{Local Well-Posedness}
  With Lemma \ref{lem1}, Propositions \ref{prop_apri} and \ref{difdif} at hands, the proofs of Theorem \ref{theo1} and Corollaries \ref{theo2}--\ref{theo3} follow exactly the same lines as in \cite{MT22}.
  For instance, to prove the unconditional uniqueness, we take  $ u_0\in H^s(\T) $ with $ s\ge s(\al) $ and $ s>1/2 $  and $u,v$ two solutions to the Cauchy problem \eqref{eq1} emanating from $u_0$ that belong to $L^\infty_T H^s $.
    According to Lemma \ref{lem1}, we know that
    $u,v\in Z^s_T $ and  Proposition \ref{difdif} together with \eqref{estdiffXregular}  ensure that $ u\equiv v $ on $[0,T_0] $ with $ 0<T_0\le T $ that only depends on $\|u\|_{Z^{s}_T}+\|v\|_{Z^{s}_T} $.
    Therefore $ u(T_0)=v(T_0) $ and we can reiterate the same argument on $ [T_0,T] $. This proves that $u\equiv v $ on $[0,T] $  after a finite number of iteration.

    Now the existence of a solution in $ H^s(\T) $ follows also from Lemma \ref{lem1}, Propositions \ref{prop_apri} and \ref{difdif} by constructing a sequence of smooth solutions associated with a smooth approximation of $ u_0\in H^s(\T) $.
    Since this sequence of solutions is bounded in $ L^\I(]0,T[;H^s(\T)) $ for some $ T>0 $, depending only on $ \|u_0\|_{H^{s_0}} $ with $ s_0 =\max\big(s(\alpha),\frac{1}{2}+\big) $, it is a Cauchy sequence in $ L^\I(]0,T[;H^{s'}) $ for any $s'<s $.
    We can pass to the limit and prove that this sequence converges in some sense to a solution  $u\in L^\I(]0,T[;H^s(\T)) $ to \eqref{eq1} emanating from $u_0 $.
    The continuity of $ u $ with values in $ H^s(\T) $ follows from classical argument involving the reversibility and time translation invariance of the equation together with the estimate  \eqref{P} whereas the continuity of the flow map follows from the frequency envelope argument introduced in \cite{KT1}.
    See also Remark 4.2 in \cite{MT22}.

   \subsection{Global Existence Results}
   The proofs of  Corollaries \ref{theo2} and \ref{theo3}  are exactly the same as the ones of Theorem 1.2 and Theorem 1.3 in \cite{MT22}.
   The improvements with respect to these last results are only due to the improvement of the local well-posedness result.
   We thus omit the proof here and refer to \cite{MT22}.

\section*{Appendix}
In this appendix, we provide the proof of Lemma \ref{lem_short1} (see also \cite{IKT}).

\begin{proof}[Proof of Lemma \ref{lem_short1}]
  We see from \eqref{eq_short3} with $g=U_\al(-t)u$ that
  \EQQS{
    \|\chi(Lt)u\|_{L_{t,x}^2}
    =\|\chi(Lt)U_{\al}(-t)u\|_{L_{t,x}^2}
    \lesssim L^{-\frac12}\|U_{\al}(-t)u\|_{L_x^2 (B_{2,1}^{\frac12})_t}
    \lesssim L^{-\frac12}\|u\|_{X^{0,\frac12,1}},
  }
  which shows \eqref{short1}.
  In particular, we have $\|Q_{\le L}(\chi(Lt)u)\|_{L_{t,x}^2}
  \le \|\chi(Lt)u\|_{L_{t,x}^2}
  \lesssim L^{-\frac12}\|u\|_{X^{0,\frac12,1}}$.
  Now we show \eqref{short2}.
  From the definition \eqref{defpsi} of $\psi $ it is easy to check that $ \|\psi_{\le L}(\ta,\xi)\|_{L_\ta^2}\lesssim L^{\frac 12}$ so that
    \EQS{\label{eq_3.1}
     \|\psi_{L_1}(\ta,\xi)
        \LR{\ta-p_{\al+1}(\xi)}^{-\frac 12}\|_{L_{\ta}^2}
      \lesssim L_1^{-\frac 12}\|\psi_{L_1}\|_{L_\ta^2}
      \lesssim 1 .
    }
    We also notice that $\F_t(\chi(Lt))(\ta)=L^{-1}\ha{\chi}(L^{-1}\ta)$.
  By definition, we have
  \EQQS{
    &\|\chi(L(\cdot))u\|_{X^{0,\frac 12,1}}\\
    &\lesssim
     L^{\frac 12} \|Q_{\le L}(\chi(Lt)u)\|_{L_{t,x}^2}
     + \sum_{L_1>L} L_1^{\frac 12} \|Q_{L_1}(\chi(Lt)u)\|_{L_{t,x}^2}\\
    &\lesssim \|u\|_{X^{0,\frac 12,1}} + \sum_{L_1>L} L_1^{\frac 12}
      \bigg\|\psi_{L_1}(\ta,\xi)
      \int_\R L^{-1}\ha{\chi}(L^{-1}(\ta-\ta'))
      \ti{u}(\ta',\xi)d\ta'\bigg\|_{L_\ta^2 l_\xi^2}\; .
  }
It thus remains to show that
  \EQS{\label{tyty}
    \sum_{L_1> L}L_1^{\frac 12}\bigg\|\psi_{L_1}(\ta,\xi)
      \int_\R L^{-1}|\ha{\chi}(L^{-1}(\ta-\ta'))\ti{u}(\ta',\xi)|d\ta'\bigg\|_{L_\ta^2 l_\xi^2}
    \lesssim \|u\|_{X^{0,\frac 12,1}},
  }
  which will complete the proof of \eqref{short2}.
  The mean value theorem implies that
  \EQQS{
    |\psi_{L_1}(\ta,\xi)|
    &=|\check{\psi}_{L_1}(\ta,\xi)\psi_{L_1}(\ta,\xi)|\\
    &\le |\check{\psi}_{L_1}(\ta,\xi)|
      |\psi_{L_1}(\ta,\xi)-\psi_{L_1}(\ta',\xi)|
      +|\check{\psi}_{L_1}(\ta,\xi)\psi_{L_1}(\ta',\xi)|\\
    &\lesssim L_1^{-1}|\check{\psi}_{L_1}(\ta,\xi)||\ta-\ta'|
      +|\psi_{L_1}(\ta',\xi)|
  }
  since $\chi\in C_0^\I(\R)$ and $0\le \chi \le 1$ (see the definition of $\psi $ in \eqref{defpsi}).
  By using this, we are reduced to bounding $ A$ and $ B$ defined by
  \EQQS{
    A&:=\sum_{L_1> L}L_1^{\frac 12}
      \|(\psi_{L_1}(\cdot,\xi)|\ti{u}(\cdot,\xi)|)
        *_\ta(L^{-1}\ha{\chi}(L^{-1}\cdot))\|_{L_\ta^2 l_\xi^2},\\
    B&:=\sum_{L_1> L}L_1^{-\frac 12}
      \bigg\|\check{\psi}_{L_1}(\ta,\xi)
        \int_\R L^{-1}|(\ta-\ta')\ha{\chi}(L^{-1}(\ta-\ta'))\ti{u}(\ta',\xi)| d\ta' \bigg\|_{L_\ta^2 l_\xi^2}.
  }
  For $A$, the Young inequality in $\ta$ gives
  \EQQS{
    A\lesssim \sum_{L_1>L}L_1^{\frac 12}\|\psi_{L_1}\ti{u}\|_{L_\ta^2 l_\xi^2}
    \le \|u\|_{X^{0,\frac 12,1}}.
  }
  On the other hand, for $B$, as in the proof of Lemma \ref{lem_short1} we have
  \EQQS{
    B
    \lesssim L^{-1}\sum_{L_1 > L} L_1^{-\frac 12}
      \bigg\|\check{\psi}_{L_1}(\ta,\xi)
      \sum_{L_2} I_{L_2} L_2^{\frac 12}
        \|\psi_{L_2}(\ta',\xi)\ti{u}(\ta',\xi)\|_{L_{\ta'}^2} \bigg\|_{L_\ta^2 l_\xi^2},
  }
  where
  \EQQS{
    I_{L_2}:=\|(\ta-\ta')\ha{\chi}(L^{-1}(\ta-\ta'))\check{\psi}_{L_2}(\ta',\xi)\LR{\ta'-p_{\al+1}(\xi)}^{-\frac 12}\|_{L_{\ta'}^2}.
  }
  Now we divide the contribution of $I_{L_2}$ into two pieces.
  On one hand, when  $L_2\ll L_1$, we have $|\ta-\ta'|\ge |\ta-p_{\al+1}(\xi)|-|\ta'-p_{\al+1}(\xi)|\gtrsim L_1 $, which implies that $|\ta-\ta'|\sim L_1$.
  Since $\ha{\chi}\in \mathcal{S}(\R)$, we see that $\sup_{\ta\in\R}\LR{\ta}^4|\ha{\chi}(\ta)|\lesssim 1$.
  This and \eqref{eq_3.1} show that
  \EQQS{
    I_{L_2}
    &\lesssim L_1
      \|(L^{-1}(\ta-\ta'))^{-4}\check{\psi}_{L_2}(\ta',\xi)
      \LR{\ta'-p_{\al+1}(\xi)}^{-\frac 12}\|_{L_{\ta'}^2}\\
    &\lesssim L_1^{-3}L^4\|\check{\psi}_{L_2}(\ta',\xi)
      \LR{\ta'-p_{\al+1}(\xi)}^{-\frac 12}\|_{L_{\ta'}^2}
    \lesssim L_1^{-3}L^4.
  }
  On the other hand, when $L_1\lesssim L_2$, it holds that
  \EQQS{
    I_{L_2}
    \lesssim L_2^{-\frac 12}
      \|\ta'\ha{\chi}(L^{-1}\ta')\|_{L_{\ta'}^2}
    \lesssim L_1^{-\frac 12}L^{\frac{3}{2}} \; .
  }
  Combining the above  estimates, we get
  \EQQS{
    B
    &\lesssim L^{3}\sum_{L_1 > L} L_1^{-\frac{7}{2}}
      \bigg\|\ti{\psi}_{L_1}(\ta,\xi)
      \sum_{L_2\ll L_1} L_2^{\frac 12}
        \|\psi_{L_2}(\ta',\xi)\ti{u}(\ta',\xi)\|_{L_{\ta'}^2} \bigg\|_{L_\ta^2 l_\xi^2}\\
    &\quad+L^{\frac 12}\sum_{L_1>L} L_1^{-1}
      \bigg\|\ti{\psi}_{L_1}(\ta,\xi)
      \sum_{L_2\gtrsim L_1} L_2^{\frac 12}
        \|\psi_{L_2}(\ta',\xi)\ti{u}(\ta',\xi)\|_{L_{\ta'}^2} \bigg\|_{L_\ta^2 l_\xi^2}\\
    &\lesssim L^3\sum_{L_1>L}L_1^{-3}\|u\|_{X^{0,\frac 12,1}}
      +L^{\frac 12}\sum_{L_1>L}L^{-\frac 12}\|u\|_{X^{0,\frac 12,1}}
    \lesssim \|u\|_{X^{0,\frac 12,1}},
  }
  where we used \eqref{eq_3.1} in the second inequality.
  This completes the proof of \eqref{tyty}.
\end{proof}

\section*{Acknowledgments}

The second author is grateful to Prof.\ Kotaro Tsugawa for fruitful discussions on Remark \ref{rem_short}.
The second author was supported by the EPSRC New Investigator Award (grant no.\ EP/V003178/1), JSPS KAKENHI Grant Number JP20J12750, JSPS Overseas Challenge Program for Young Researchers and Iizuka Takeshi Scholarship Foundation.
A part of this work was conducted during a visit of the second author at Institut Denis Poisson (IDP) of Universit\'e de Tours in France. The second author is deeply grateful to IDP for its kind hospitality.

\end{document}